\newtheorem{theorem}{Theorem}[section]
\newtheorem{corollary}{Corollary}[section]
\newtheorem{lemma}{Lemma}[section]
\newtheorem{question}{Question}
\theoremstyle{definition}
\newenvironment{que}
    {\begin{quotation}\vskip -1.5em \begin{question}
    }
    { 
    \end{question}\end{quotation}
    }
\newcommand{\beql}[1]{\begin{equation}\label{#1}}
\newcommand{\eeq}{\end{equation}}
\newcommand{\comment}[1]{}
\newcommand{\Ds}{\displaystyle}
\newcommand{\Abs}[1]{{\left|{#1}\right|}}
\newcommand{\Linf}[1]{{\left\|{#1}\right\|_\infty}}
\newcommand{\Norm}[1]{{\left\|{#1}\right\|}}
\newcommand{\Floor}[1]{{\left\lfloor{#1}\right\rfloor}}
\newcommand{\Set}[1]{{\left\{{#1}\right\}}}
\newcommand{\RR}{{\mathbb R}}
\newcommand{\CC}{{\mathbb C}}
\newcommand{\ZZ}{{\mathbb Z}}
\newcommand{\TT}{{\mathbb T}}
\newcommand{\QQ}{{\mathbb Q}}
\newcommand{\one}{{\bf 1}}
\newcommand{\inner}[2]{{\langle #1, #2 \rangle}}
\newcommand{\dens}{{\rm dens\,}}
\newcommand{\supp}{{\rm supp\,}}
\newcommand{\vol}{{\rm vol\,}}
\newcommand{\ft}[1]{\widehat{#1}}
\newcommand{\qq}[1]{{{\begin{que} {#1} \end{que}}}}
\newcounter{rem}
\newcounter{step}
\newcounter{mysec}
\newcounter{mysubsec}[mysec]
\title{Orthogonal Fourier Analysis on domains}
\author{Mihail N. Kolountzakis}
\address{\href{http://math.uoc.gr/en/index.html}{Department of Mathematics and Applied Mathematics}, University of Crete,\\Voutes Campus, 70013 Heraklion, Greece,\\and\\ \href{https://ics.forth.gr/}{Institute of Computer Science}, Foundation of Research and Technology Hellas, N. Plastira 100, Vassilika Vouton, 700 13, Heraklion, Greece}
\email{kolount@gmail.com}
\date{\today}
\begin{document}

\begin{abstract}
In this paper we go over the history of the Fuglede or Spectral Set Conjecture as it has developed over the last 30 years or so. We do not aim to be exhaustive and we do not cover important areas of development such as the results on the problem in classes of finite groups or the version of the problem that focuses on spectral measures instead of sets. The selection of the material has been strongly influenced by personal taste, history and capabilities. We are trying to be more descriptive than detailed and we point out several open questions.  
\end{abstract}

\keywords{Fuglede Conjecture, Spectral Sets, Tilings}



\maketitle

\begin{center}
\textit{For Bent Fuglede. Who started all this.}
\end{center}

\tableofcontents

\section{Fourier Analysis on domains}\label{s:fa-on-domains}


\subsection{Sets with orthogonal bases of exponentials}\label{ss:ortho-bases}

Fourier Analysis allows us to decompose a function on a (locally compact) abelian group, say $\RR^d$, $\TT=\RR/\ZZ$, or $\ZZ_N = \ZZ/(N\ZZ)$, as a linear combination of characters of the group. For instance, when we decompose functions on the torus $\TT$ (which we may view as the interval $[0, 1)$) each function is written as a Fourier series
\begin{equation}\label{fourier-series}
f(x) = \sum_{n=-\infty}^\infty \ft{f}(n) e^{2\pi i n x}.
\end{equation}
The characters here (continuous homomorphisms from the group to the multiplicative group $\CC$) are the functions $e^{2\pi i n x}$ defined for $x \in \TT$ and indexed by $n \in \ZZ$. Thus we view $\ZZ$ as the \textit{dual group} of $\TT$, the group of the characters (or frequencies) on $\TT$. Of course, one must be more precise about which functions on $\TT$ are to be expanded and how the series is understood to converge, and classical Fourier Analysis is all about subtle questions arising from this expansion \cite{zygmund2002trigonometric,katznelson2004introduction,wolff2003lectures}, questions that have driven the development of Mathematical Analysis and other branches of Mathematics from 1900 and even earlier.

But the most useful and best understood such expansion is when $f$ is in the Hilbert space $L^2(\TT)$ and the series is understood as convergent in the $L^2$ norm. In this case we have the huge advantage that the characters $e^{2\pi i n x}$ are orthogonal, and all the conveniences that arise from orthogonal expansions in Hilbert space, such as Parseval's formula $\Norm{f}_2^2 = \sum_{n \in  \ZZ} \Abs{\ft{f}(n)}^2$. This is by far the part of Fourier Analysis most used in applications, either in pure or applied Mathematics or in other sciences. And orthogonality is critical.

What happens though when we restrict our functions' support to be a subset of our group? To stay in the context of the torus $\TT$, assume, for example, that we have a function $f \in L^2([0, \frac12])$ and want to expand it as in \eqref{fourier-series}. Of course we can extend $f$ to be zero on the rest of $\TT$ and use the original expansion \eqref{fourier-series} in $\TT$, but then the constituent parts of $f$, the characters $e^{2\pi i n x}$ are no longer orthogonal in $L^2([0, \frac12])$. What is more, by doing this extension by zero to the rest of $\TT$, we end up using a lot more frequencies (characters) than we actually need (exactly twice more in this case, in a very well defined sense). This is not unexpected since this extension by zero to the whole group is certainly wasteful. Clearly the ideal situation here would be to have an expansion like \eqref{fourier-series} in which the summands are orthogonal in our domain, $[0, \frac12]$.

In this particular case we are lucky. It is enough to take all even frequencies $n \in 2\ZZ$ in order to be able to orthogonally expand every $f \in L^2([0, \frac12])$ and in a unique way. Writing $e_\lambda(x) = e^{2\pi i \lambda \cdot x}$ and observing that $\Norm{e_\lambda}_{L^2([0, \frac12]} = 2^{-1/2}$ we have
$$
f(x) = \sum_{n \in 2\ZZ} \inner{f}{\sqrt2\, e_{2n}} e_{2n}(x).
$$
With half as many frequencies as before we have our orthogonal expansion. Let us call the set of frequencies we used
$$
\Lambda = \Set{2n:\ n\in\ZZ}
$$
a \textit{spectrum} of the set $E = [0, \frac12]$ (see Fig.\ \ref{fig:half-interval}).

It is precisely this question that we deal with in this paper.
\begin{quotation}
{\bf Main Question:}
If $E$ is a subset of the locally compact abelian group $G$ of finite Haar measure, when can we find a set of characters $\Lambda \subseteq \ft{G}$ (here $\ft{G}$ is the dual group of $G$ \cite{rudin1962groups}, the group of continuous characters on $G$) such that $\Lambda$ is orthogonal on $E$ and complete on $L^2(E)$?

If we can find such a $\Lambda$ we call it a \textit{spectrum} of $E$ and $E$ itself is called a \textit{spectral set}.
\end{quotation}

\begin{figure}[h]
\input 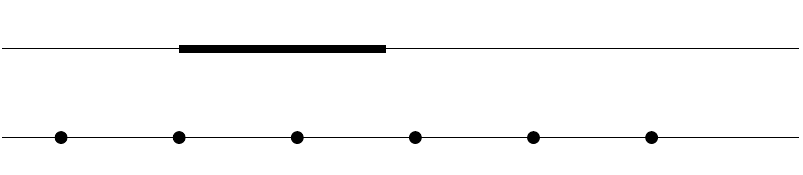_t
\caption{The set $E = [0, 1/2]$ and its spectrum $\Lambda = 2\ZZ$. This set $2\ZZ$ is not the only spectrum of $E$. Any translate of it is also a spectrum and this is generally true in all groups.}\label{fig:half-interval}
\end{figure}

It should be clear that the spectrum $\Lambda$ of $E$, when it exists, is generally not unique: any translate (in $\ft{G}$) of $\Lambda$ is again a spectrum of $E$.

Let us now change the group and work in $\RR$ instead of $\TT$ (note: $\TT$ is not a subgroup of $\RR$!). Consider the unit interval $I = [0, 1] \subseteq \RR$. By \eqref{fourier-series} we immediately get that $\ZZ \subseteq \ft{\RR} \simeq \RR$ is a spectrum of $I$, and so is any translate of $\ZZ$. Similarly viewing $E = [0, 1/2]$ as a subset of $\RR$ now we again get that $2\ZZ$ and any of its translates are spectra of $E$. And in the group $\RR^d$ the unit cube $I_d = [0, 1]^d$ is again a spectral set one of whose spectra is $\ZZ^d$ (this is the $L^2$ theory of multivariable Fourier series). But $I_d$ has many more spectra than the translates of $\ZZ^d$ for $d \ge 2$ \cite{iosevich1998spectral,lagarias2000orthonormal,kolountzakis2000packing} or see \S \ref{ss:spectra-of-cube}. One spectrum of the unit square $I_2$ is shown in Fig.\ \ref{fig:square} which is not a translate of $\ZZ^2$.

\begin{figure}[h]
\input 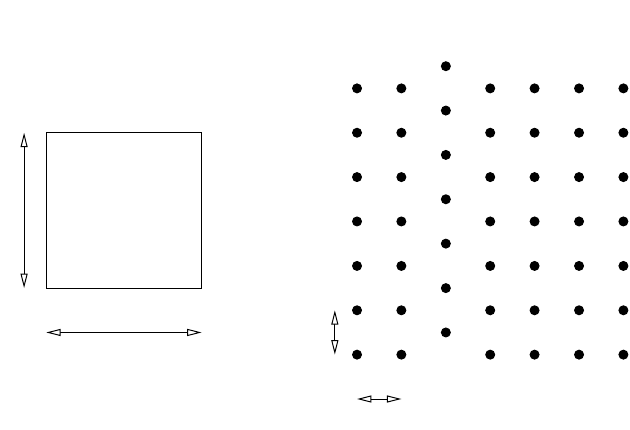_t
\caption{The unit square $I_2$ in the plane and one of its spectra, which consists of $\ZZ^2$ but with one of the columns shifted arbitrarily in the vertical direction. There is a complete description of all spectra of $I_2$ in \S \ref{ss:spectra-of-cube}.}\label{fig:square}
\end{figure}

A more interesting example of a spectral set in $\RR$ and its spectrum (again in $\RR$) is the set $E = [0, 1/2] \cup [1, 3/2]$ one of whose spectra is the set $\Lambda = 2\ZZ \cup (2\ZZ-\frac12)$ (see Fig.\ \ref{fig:two-intervals}).

\begin{figure}[h]
\ifdefined\SMART\resizebox{10cm}{!}{\input 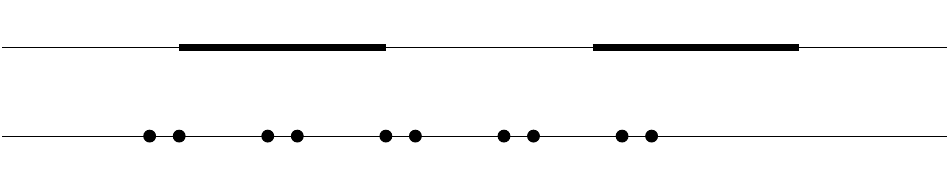_t}\else
\input two-intervals.pdf_t
\fi
\caption{The union of two intervals $[0, 1/2] \cup [1, 3/2]$ is a spectral set in $\RR$ and $\Lambda = 2\ZZ \cup (2\ZZ-\frac12)$ is its spectrum.}\label{fig:two-intervals}
\end{figure}

\subsection{The Fuglede or Spectral Set conjecture}\label{ss:fuglede-ss-conjecture}

It was Fuglede \cite{fuglede1974operators} who first posed the question of spectrality posing a conjecture.

\begin{quotation}
{\bf Fuglede conjecture or Spectral Set Conjecture}:
A set $E \subseteq \RR^d$ is spectral if and only if $E$ can tile $\RR^d$ by translations.
\end{quotation}

For $E$ to be able to \textit{tile $\RR^d$ by translations} means that it is possible to translate $E$ to some locations $T\subseteq \RR^d$ so that almost all points in $\RR^d$ (in the sense of Lebesgue measure) belong exactly to one $T$-translate of $E$.

The reason Fuglede was interested in spectral sets is that he had proved spectrality of a set $E \subseteq \RR^d$ to be equivalent to the possibility of restricting to $L^2(E)$ the distributional-sense partial differentiations $-i \partial_{x_1}, \ldots, -i \partial_{x_d}$ so that they become commuting self-adjoint operators on $L^2(E)$ (this was a question of Segal).

Fuglede himself proved in \cite{fuglede1974operators} that if one adds the word \textit{lattice} to both sides of the conjecture then it becomes true:

\begin{theorem}[\cite{fuglede1974operators}]\label{thm:lattice-fuglede}
Suppose $E \subseteq \RR^d$ has finite measure. Then $E$ tiles $\RR^d$ when translated by the lattice $L$ if and only if the dual lattice $L^*$ of $L$ is a spectrum of $E$.
\end{theorem}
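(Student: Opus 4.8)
The plan is to reduce the equivalence to two elementary Fourier computations plus one measure-theoretic (``folding'') argument, and to isolate the latter as the only real difficulty. Throughout fix the convention $\ft{\one_E}(\xi)=\int_E e^{-2\pi i\inner{\xi}{x}}\,dx$ and write $D=\vol(\RR^d/L)$, so that the dual lattice has covolume $\vol(L^*)=1/D$. The first step is to record the orthogonality relation: for $\lambda,\mu\in L^*$ we have $\Inner{e_\lambda}{e_\mu}_{L^2(E)}=\int_E e^{2\pi i\inner{\lambda-\mu}{x}}\,dx=\ft{\one_E}(\mu-\lambda)$. Since $L^*$ is a group, as distinct $\lambda,\mu$ range over $L^*$ the difference $\mu-\lambda$ ranges over $L^*\setminus\Set{0}$; hence $\Set{e_\lambda:\lambda\in L^*}$ is orthogonal on $E$ if and only if $\ft{\one_E}$ vanishes on $L^*\setminus\Set{0}$.

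Next I would periodize. Set $g(x)=\sum_{\ell\in L}\one_E(x-\ell)$, a nonnegative, integer-valued, $L$-periodic function lying in $L^1(\RR^d/L)$. Expanding $g$ in the Fourier basis of the torus $\RR^d/L$ and unfolding the sum (using that $e^{-2\pi i\inner{\xi}{\cdot}}$ is $L$-periodic for $\xi\in L^*$) identifies its Fourier coefficients as $D^{-1}\ft{\one_E}(\xi)$, $\xi\in L^*$. By uniqueness of Fourier coefficients, $E$ tiles $\RR^d$ by $L$ — that is, $g\equiv1$ a.e.\ — exactly when $\ft{\one_E}(\xi)=0$ for all $\xi\in L^*\setminus\Set{0}$ and the constant term $D^{-1}\ft{\one_E}(0)=\Abs{E}/D$ equals $1$. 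Comparing with the first step, the tiling condition is precisely orthogonality together with the single scalar normalization $\Abs{E}=D$.

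It then remains to show that, once orthogonality holds, completeness of $\Set{e_\lambda:\lambda\in L^*}$ in $L^2(E)$ is equivalent to $\Abs{E}=D$; combined with the above this gives the theorem in both directions, since a spectrum is exactly an orthogonal \emph{and} complete system. Assuming orthogonality, the previous paragraph forces $g\equiv k:=\Abs{E}/D$, and integrality of $g$ gives $k\in\NN$; equivalently the folding map $\pi\colon E\to\RR^d/L$, $\pi(x)=x+L$, is $k$-to-one almost everywhere. If $E$ tiles ($k=1$), then $\pi$ is a measure isomorphism of $E$ onto the torus (with Haar mass $D$), and the unitary $h\mapsto h\circ\pi$ carries the Fourier orthonormal basis of $L^2(\RR^d/L)$ onto $\Set{D^{-1/2}e_\lambda:\lambda\in L^*}$, which is therefore automatically complete; this settles the ``tiles $\Rightarrow$ spectral'' direction. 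For the converse, every $e_\lambda$ with $\lambda\in L^*$ is constant on the fibers of $\pi$; if $k\ge2$ then almost every fiber contains at least two points of $E$, so there is a nonzero $f\in L^2(E)$ with vanishing fiberwise average (splitting $E$ into $k$ fundamental domains by a measurable selection produces one explicitly), and such $f$ is orthogonal to every $e_\lambda$, contradicting completeness. Hence completeness forces $k=1$, i.e.\ $\Abs{E}=D$, and then $E$ tiles.

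The two Fourier computations are routine; the crux, and the step I expect to demand the most care, is the folding argument. Its delicate points are the measure-theoretic disintegration behind ``$g\equiv k$ means $\pi$ is $k$-to-one,'' the measurable selection exhibiting a fiberwise-mean-zero function when $k\ge2$, and verifying that the fiber-constant functions form a \emph{closed} subspace, so that their failure to be dense is a genuine failure of completeness. None of these is hard in $\RR^d$, but they are precisely where the metric notion (tiling multiplicity one) is converted into the Hilbert-space notion (completeness), so I expect that conversion to be the main obstacle.
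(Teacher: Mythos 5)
Your proof is correct, but it takes a genuinely different route from the paper's. The paper stays inside the tiling framework it builds in \S\ref{ss:fourier-tiling}--\S\ref{ss:spectrality-as-tiling}: it invokes the reformulation \eqref{spectrum-as-tiling} of spectrality as the tiling $\Abs{\ft{\one_E}}^2 * \delta_{L^*} = \vol(E)^2$, then uses the Poisson Summation Formula \eqref{psf} (so that $\ft{\delta_{L^*}}$ is a measure supported on $L$, making the support condition \eqref{supports} both necessary and sufficient) to reduce spectrality to the packing condition $L \cap (E-E) = \Set{0}$, and finally upgrades packing to tiling by a periodicity-plus-density argument. You never touch tempered distributions, Poisson summation, or the spectrum-as-tiling equivalence. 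Instead you read orthogonality directly as vanishing of $\ft{\one_E}$ on $L^*\setminus\Set{0}$, characterize tiling through the Fourier coefficients of the periodization $g=\sum_{\ell\in L}\one_E(\cdot-\ell)$, and---this is the real divergence---handle completeness through the folding map $\pi\colon E\to\RR^d/L$: orthogonality alone forces $g\equiv k$ for a nonnegative integer $k$, the case $k=1$ gives a unitary $h\mapsto h\circ\pi$ identifying $L^2(\RR^d/L)$ with $L^2(E)$ (hence completeness), while $k\ge 2$ yields an explicit nonzero $f$ with zero fiber sums, orthogonal to every $e_\lambda$, $\lambda \in L^*$. Your argument is more elementary and self-contained (classical Fourier series on a torus plus measure theory) and is closer in spirit to Fuglede's original argument; what the paper's route buys is uniformity of method, since \eqref{spectrum-as-tiling}, \eqref{supports} and \eqref{psf} are exactly the toolkit deployed for the non-lattice results elsewhere in the paper, and the lattice theorem there serves partly to illustrate that machinery.

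One small remark: the last of your three ``delicate points'' is not actually needed. You do not have to verify that the fiber-constant functions form a closed subspace; once you exhibit a single nonzero $f\in L^2(E)$ orthogonal to every $e_\lambda$, completeness fails by definition, because $f$ is orthogonal to the closed linear span of the system. The measurable-selection step (splitting $E$ into $k$ graphs over the torus by, say, enumerating $L$ and always choosing the first available lattice representative) is the only point requiring care, and it is routine.
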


A lattice $L$ in $\RR^d$ is a subgroup of $\RR^d$ generated by $d$ linearly independent vectors. In other words $L = A\ZZ^d$ where the $d\times d$ matrix $A$ is non-singular. Then $L^*$, the dual lattice of $L$, is the lattice $A^{-\top}\ZZ^d$. So $(L^*)^* = L$.

We will see the proof of Theorem \ref{thm:lattice-fuglede} in \S \ref{ss:lattice-fuglede}.

Fuglede also proved that the conjecture is true in the case of a triangle or a disk in the plane: since they clearly do not tile by translations, they are also not spectral. (See also \cite{fuglede-ball,iosevich2001convexbodies,kolountzakis2004study,kolountzakis2004distance} for the case of the ball.)
In Fig.\ \ref{fig:some-domains} we show some spectral and some non-spectral domains which satisfy the Spectral Set Conjecture.

\begin{figure}[h]
\ifdefined\SMART\resizebox{10cm}{!}{\input 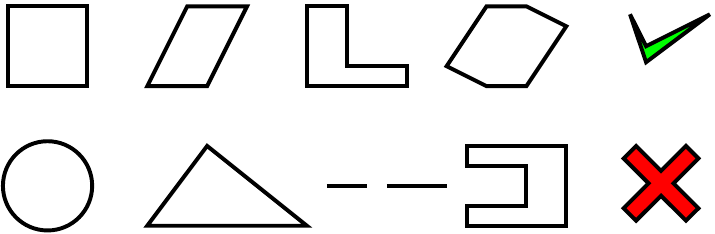_t}\else
\input various.pdf_t
\fi
\caption{Some domains in $\RR^2$ and $\RR$ which are and are not spectral.}\label{fig:some-domains}
\end{figure}

Theorem \ref{thm:lattice-fuglede} immediately furnishes us with further examples of spectral sets: all \textit{fundamental domains} of lattices have a lattice spectrum. A lattice is a subgroup of $\RR^d$ and any selection of one element from each of its cosets makes up a \textit{fundamental domain} of a lattice. For the lattice $L = A \ZZ^d$ the fundamental parallelepiped $A[0, 1)^d$ is such a fundamental domain, but we can construct many others as follows: start with a fundamental domain $\Omega$, cut off a piece of it and move it by an element of $L$. This motion does not change the coset where each moved element of $\Omega$ belongs, so it remains a fundamental domain. A fundamental domain does not have to be bounded as this process can be repeated infinitely often. So, a fundamental domain $E$ of a lattice $L$, is exactly what we call a \textit{lattice tile}: the translates $E+\ell$, $\ell \in L$, are such that \textit{almost all} points in $\RR^d$ are covered by exactly one of them. We emphasize that we only demand this exact covering almost everywhere, so a fundamental domain, for us, can be altered on a set of measure zero and it still remains a fundamental domain.

\begin{figure}[h]
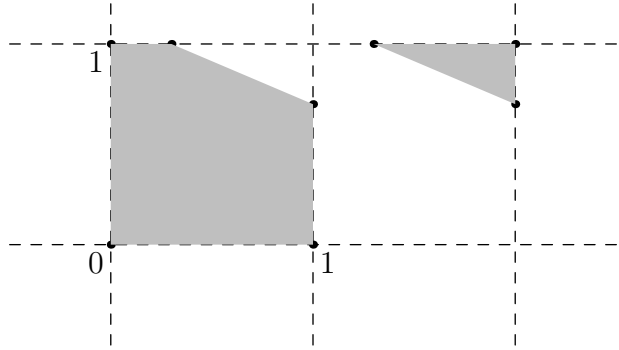

\begin{center}
\begin{asy}
size(8cm);

pair a=(0,0), b=(1,0), c=(1,0.7), d=(0.3,1), e=(0,1), c1=(2,0.7), d1=(1.3,1), f=(2,1);

draw((-0.5,0) -- (2.5,0), dashed);
draw((-0.5,1) -- (2.5,1), dashed);
draw((0,-0.5)-- (0,1.2), dashed);
draw((1,-0.5)-- (1,1.2), dashed);
draw((2,-0.5)-- (2,1.2), dashed);

dot(a); dot(b); dot(c); dot(d); dot(e); dot(c1); dot(d1); dot(f);

fill(a -- b -- c -- d -- e -- cycle, mediumgray);
fill(c1 -- f -- d1 -- cycle, mediumgray);
label("0", (0,0), SW); label("1", (1,0), SE); label("1",(0,1), SW);
//label("$\Omega$", (0.5,0.5), SW);
//label("$\Omega$", (1./3.)*(c1+f+d1), E);

\end{asy}

\caption{Shaded region is a fundamental domain of $\ZZ^2$ in $\RR^2$. It arises by cutting off a piece of the unit square and moving it by an integer vector.}\label{fig:fd}
\end{center}
\end{figure}

Orthogonality of the exponentials $e_\lambda(x)$ and $e_\mu(x)$ on $L^2(E)$ is easy to describe. Since
$$
\inner{e_\lambda}{e_\mu} = \int_E  e_\lambda(x) \overline{e_\mu(x)}\,dx =
\int \one_E(x) e^{2\pi i (\lambda-\mu)x}\,dx
$$
it follows that the frequencies $\lambda$ and $\mu$ are orthogonal on $E$ (meaning $e_\lambda$ and $e_\mu$ are orthogonal in $L^2(E)$) if and only if
\begin{equation}\label{diff-in-zeros}
\ft{\one_E}(\mu-\lambda) = 0.
\end{equation}
Here $\ft{\one_E}$ is the Fourier Transform of $\one_E$, the indicator function of $E$. In general our Fourier transform on $\RR^d$ is defined with the normalization
$$
\ft{f}(t) = \int_{\RR^d} f(x) e^{-2\pi i t \cdot x}\,dx,
$$
for $f \in L^1(\RR^d)$. 

\subsection{What this survey is not about}\label{ss:not-about}

In this paper, which is some sort of continuation of \cite{kolountzakis2004study}, I have emphasized the areas of the problem that I am most familiar with and that I have worked on most. It is inevitable that some areas are neglected.

The most important omission is the part of the theory that has been developed, and is still developing ever more intensely, around finite abelian groups. As explained below in \S \ref{ss:spectrality-in-groups} the notions of tiling and spectrality make perfect sense in all locally compact abelian groups. This is not a generalization for its own sake. A very big part of the development of the Fuglede conjecture, including its disproofs presented in \S \ref{ss:st-fails} and \S \ref{ss:ts-fails}, are first done in finite abelian groups, then are pulled on to $\ZZ^d$ and $\RR^d$. There are many established reductions of both directions of the Fuglede Conjecture from one group to another \cite{dutkay2014some}.

A lot of work has been done on classes of cyclic groups $\ZZ_N$, usually restricted by how many prime factors are allowed into $N$ and to what exponents \cite{laba2024splitting,laba2022splitting,laba2023coven,laba2022combinatorial,malikiosis2022structure,malikiosis2017fuglede,kiss2022fuglede,kiss2020discrete,laba2002spectral,zhang2024group} with many of them based on the influential paper \cite{coven1999tiling}. There are also several results concerning products of cyclic groups with few factors \cite{iosevich2017fuglede,fallon2019fuglede,fallon2023spectral,fallon2022spectral,kiss2021fuglede,malikiosis2024linear,zhang2023fuglede,aten2017tiling,ferguson2020fuglede,kiss2024tiling,mattheus2020counterexample,shi2020equi}

The other major omission of this survey is the work that has been done on a natural generalization of spectral sets: \textit{spectral measures}. Given a measure $\mu$ on a locally compact abelian group $G$, when can we find a collection of characters $\Lambda \subseteq \ft{G}$ that form an orthogonal basis of $L^2(\mu)$. Connections with tiling are weaker here and the main question is which measures are spectral or not, meaning for which measures such a set $\Lambda$ of characters exists that forms an orthogonal basis of $L^2(\mu)$.

It was first pointed out in \cite{jorgensen1998dense,strichartz2000mock} who indicated several examples of Cantor-type sets which are spectral and which are not. It was shown, for example, that the usual ternary Cantor set with its natural measure (start with Lebesgue measure on $[0, 1]$ and each time you throw out an interval from the middle of another redistribute its measure equally to its two neighbors) is not spectral, while a variant (at each stage split each interval intro 4 equal intervals then keep only the first and third) is spectral. There has been a huge number of papers since then such as \cite{laba2002spectral-cantor,dutkay2019hadamard,dai2012does,an2019spectral,dai2013spectral}.

When one leaves orthogonality of the exponentials behind, but still requires a collection of exponentials that form a basis of some sort (a Riesz basis or a frame, typically) the problem's nature changes completely (and in some sense it becomes more interesting as it concerns many more domains). It loses, to a great but not complete extent, the algebraic or number theoretic character which arises from the identities that guarantee the orthogonality and becomes a more quantitative subject, whose results do not resemble much those in the Spectral Set problem. Still there are some borderline similarities as evidenced by some papers \cite{grepstad2014multi,kolountzakis2015multiple,lai2023riesz,debernardi2022riesz}. But for the most part it is a different subject. It took decades, for example, to find a bounded measurable set in $\RR$ of positive measure that does not have a Riesz basis of exponentials \cite{kozma2023set}. It was not much easier to show that finite unions of intervals do have a Riesz basis of exponentials \cite{kozma2015combining,kozma2016combining} or to prove that unbounded sets of finite measure in $\RR$ have a frame of exponentials \cite{nitzan2016exponential}. Other interesting results concern measures that are mixtures of different dimensions or linear but embedded in higher dimension \cite{lev2018fourier,lai2021spectral,iosevich2022fourier}. When one restricts the frequencies or demands that, say, a basis for a union is a union of the individual bases new interesting phenomena arise \cite{pfander2019riesz,pfander2024exponential,lee2024exponential}.

\section{Tiling by translation}\label{s:tiling}


\subsection{Tiling by a function}\label{ss:tiling-by-a-function}

So far we have dealt with a set $E$ tiling $\RR^d$, say, by translations at a set $T \subseteq \RR^d$. Since we are only going to deal with sets $E$ of positive measure it follows that $T$ is a countable, discrete set. An easy way to define translational tiling by $E$ is to demand that
$$
\sum_{t \in T} \one_E(x-t) = 1,\ \ \text{ for almost every } x \in \RR^d.
$$
This description of tiling makes the notion of tiling more amenable to manipulation and analysis.

Once we write this it begs for generalization. Let $f:\RR^d\to\RR$ be a measurable function (in most cases we will deal with functions in $L^1(\RR^d)$, often nonnegative). We say it tiles at level $\ell \in \RR$ when translated at $T$ if
\begin{equation}\label{soft}
\sum_{t \in T} f(x-t) = \ell,
\end{equation}
for almost all $x \in \RR^d$ and with the sum converging absolutely. We do allow $T$ to be a multiset: some $t \in T$ may appear more than once. See Fig.\ \ref{fig:soft-tile} for an example.

\begin{figure}[h]
\ifdefined\SMART\resizebox{10cm}{!}{\input 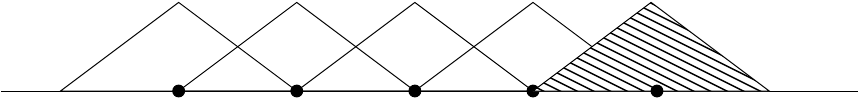_t}\else
\input soft-tile.pdf_t
\fi
\caption{The function $f$ with the shaded triangle graph tiles $\RR$ when translated at the locations shown by black dots. The level of the tiling is the height of the triangle.}\label{fig:soft-tile}
\end{figure}

It makes sense to restrict $T$ to be of bounded density (the ratio $\Abs{T \cap Q} / \vol(Q)$ is bounded, for all cubes $Q \subseteq \RR^d$ of sidelength $\ge 1$), a condition that is automatic in several natural cases, the most important of which is the case $f \ge 0$ and $0 < \int f < \infty$ \cite{kolountzakis1996structure}.

By the bounded density of $T$ it follows that the measure
$$
\delta_T = \sum_{t \in T} \delta_t
$$
which consists of one unit point mass at each point of $T$ (or, if $T$ is a multiset, a point mass at $t$ equal to the multiplicity of $t \in T$) is a \textit{tempered distribution} \cite{strichartz2003guide}. Informally speaking (see \cite{kolountzakis1996structure} for all the details) we can rewrite \eqref{soft} as the convolution
\begin{equation}\label{tiling-as-convolution}
f * \delta_T = \ell.
\end{equation}

\subsection{Tiling seen on the Fourier side}\label{ss:fourier-tiling}

Taking the Fourier Transform of \eqref{tiling-as-convolution} we obtain, at least formally (but see \cite{kolountzakis1996structure,kolountzakis2016non} for a more rigorous discussion),
\begin{equation}\label{ft-product}
\ft{f} \cdot \ft{\delta_T} = \ell \delta_0.
\end{equation}
Let us remark here that $\ft{f}$ is a continuous function on $\RR^d$ (being the Fourier Transform of an integrable function) but $\ft{\delta_T}$ is, again, a tempered distribution, and not necessarily locally a measure.

Comparing the supports of the two sides of \eqref{ft-product} we see that the supports of $\ft{f}$ and $\ft{\delta_T}$ are disjoint \cite{kolountzakis1996structure} apart from the origin. Wherever $\ft{\delta_T}$ lives $\ft{f}$ must kill it, except at the origin. Thus the following is a \textit{necessary} condition for tiling \eqref{soft} to happen:
\begin{equation}\label{supports}
\supp \ft{\delta_T} \subseteq \Set{\xi\in\RR^d: \ft{f}(\xi) = 0} \cup \Set{0}.
\end{equation}

\begin{figure}[h]
\ifdefined\SMART\resizebox{10cm}{!}{\input 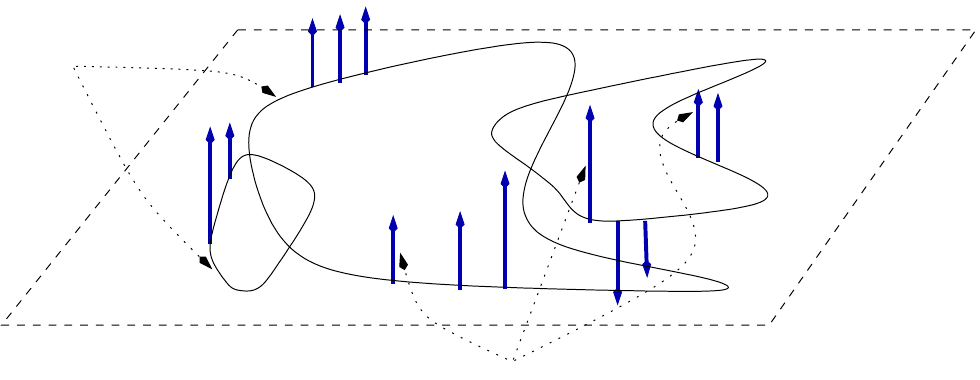_t}\else
\input supports.pdf_t
\fi
\caption{The sero set of $\ft{f}$, denoted by $Z(f)$, supports the tempered distribution $\ft{\delta_T}$, the Fourier Transform of the measure $\delta_T$ which encodes the translation multiset.}
\label{fig:supports}
\end{figure}

Condition \eqref{supports} is almost a sufficient condition for tiling as well. If we somehow know that $\ft{\delta_T}$ is locally a measure then \eqref{supports} implies \eqref{soft}: roughly any zero of $\ft{f}$ is enough to kill a measure, but if $\ft{f}$ has to kill a higher order tempered distribution then its zeros must be deeper. The textbook example of this situation is that $x\cdot \delta_0'$ is not the 0 distribution, but $x^2 \cdot \delta_0' = 0$. Here $\delta_0'$ is the derivative of the measure $\delta_0$, which is a distribution of first order, not locally a measure. The simple zero of $x$ at $0$ is not enough to kill $\delta_0'$ but the deeper zero of $x^2$ at 0 is enough. Regarding, again condition \eqref{supports}, it was shown in \cite{lev2022example} that even under the assumptions $f > 0$ and $0 < \int f < \infty$ condition \eqref{supports} does not imply \eqref{soft}.

In almost all work that deals with translational tilings using the Fourier Transform condition \eqref{supports} is the starting point, which makes clear the fact that the shape of Fourier zeros of a function
\begin{equation}\label{fourier-zeros}
Z(f) = \Set{\xi \in\RR^d: \ft{f}(\xi) = 0}
\end{equation}
is of great importance in all problems concerning tilings by $f$ but in other geometric problems too, such as the Pompeiu problem \cite{kolountzakis2024curves,machado2023null}.

\subsection{An example from the Scottish book}\label{ss:scottish-book}

It is interesting to see by an example how pointing out a simple condition such as \eqref{supports} can make a big difference in our understanding of a problem. In the famous Scottish book \cite{mauldin1981scottish}, 
which was the notebook used at the Scottish caf\'e in Lviv (then Lwów) in the 1930s and 1940s by some of the well known heros of Mathematical Analysis (Stefan Banach, Mark Kac, Kazimierz Kuratowski, Hugo Steinhaus, Stanisław Ulam among them) to write down problems and solutions that they talked about while drinking coffee or liquor, one can find the following problem posed by H. Steinhaus.
\begin{quotation}
{\bf Problem 181}: Find a continuous function (or perhaps an analytic one) $f(x)$, positive and such that one has
$$
\sum_{n=-\infty}^\infty f(x+n) = 1
$$
(identically in $x$ in the interval $-\infty < x < +\infty$); examine whether $(1/\sqrt{\pi}) e^{-x^2}$ is such a function; or else prove the impossibility; or else prove uniqueness.
\end{quotation}
The condition asked to be satisfied by $f$ is of course the condition that it tiles the real line when translated by $\ZZ$. In light of \eqref{supports} and the very well known fact that the Fourier Transform of the gaussian $e^{-x^2}$ is a multiple of itself (and has thus no zeros at all) the power of notation, the right context and the era one lives in becomes clear, when one thinks that a giant such as Steinhaus posed this question without seeing the obvious negative answer for the gaussian (he did prove that the gaussian does not work in another way though \cite{mauldin1981scottish}).

\begin{figure}[h]
\input 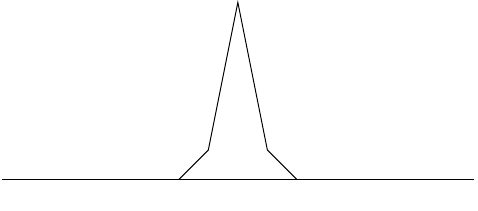_t
\caption{The Fourier Transform of the sum of two triangles functions of incommensurable bases is always positive.}\label{fig:triangles}
\end{figure}

We can easily solve this problem if we take as $f$ the Fourier Transform of a function which has compact support (this ensures analyticity for $f$), is positive definite (a function is positive definite for us if its Fourier Transform is non-negative) and we manage to ensure $f>0$. The Fourier Transform $\ft{f}$ must vanish on $\ZZ\setminus\Set{0}$, the support of $\ft{\delta_\ZZ}$ minus the point 0, according to the Poisson Summation Formula (see \S \ref{ss:psf}), but this is easy to achieve by taking $\ft{f}$ to be supported inside $(-1, 1)$. We use the well known fact that the triangle function
$$
S(x) = (1-\Abs{x})^+
$$
is a positive definite function. To see this observe that
$$
S(x) = \one_{[-\frac12, \frac12]}*\one_{[-\frac12, \frac12]}(x)
$$
and therefore
$$
\ft{S}(\xi) = \left(\frac{\sin(\pi \xi)}{\xi}\right)^2 \ge 0
$$
and the zeros of $\ft{S}$ are exactly at the integers apart from 0. We can then define $\ft{f}(\xi) = S(2\xi)+S(\sqrt2 \xi)$ as shown in Fig.\ \ref{fig:triangles} to obtain a function $\ft{f}$, supported in $(-1, 1)$, which is positive definite and with $f>0$ always, due to the fact that the only way a zero of $f$ could arise would be if both $\ft{S(2\cdot)}$ and $\ft{S(\sqrt2 \cdot)}$ vanished at the same time, which cannot happen because $\sqrt2$ is irrational.

\subsection{Lattices and the Poisson Summation Formula}\label{ss:psf}

\begin{figure}[h]
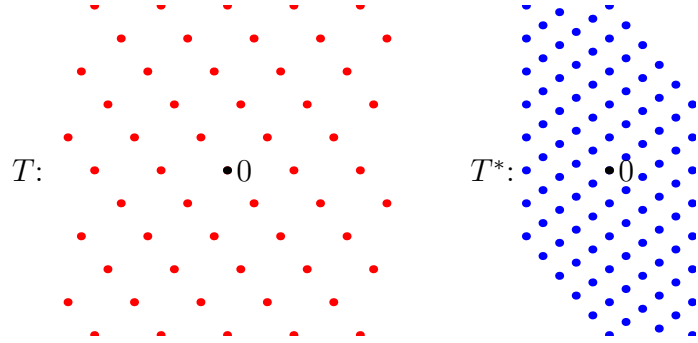

\begin{center}
\begin{asy}
size(9cm);

picture pic, picd;
real[][] a={{2, 1.2}, {0, 1}};
real[][] b=transpose(inverse(a));

int N=5;

for(int i=-N; i<=N; ++i)
 for(int j=-N; j<=N; ++j) {
  real[] x={i, j};
  real[] y=a*x;
  pair p=(y[0], y[1]);
  dot(pic, p, red);
  y = b*x;
  pair p=(y[0], y[1]);
  dot(picd, p, blue);
 }

dot(pic, "0", (0, 0), black);
dot(picd, "0", (0, 0), black);

clip(pic, (-N, -N)--(N, -N)--(N, N)--(-N, N)--cycle);
label(pic, "$T$:", (-1.2*N, 0));
clip(picd, (-N, -N)--(N, -N)--(N, N)--(-N, N)--cycle);
label(picd, "$T^*$:", (-0.7*N, 0));
add(pic);
add(shift((2.3*N, 0))*picd);
\end{asy}
\end{center}
\caption{A lattice $T$ and its dual lattice $T^*$.}\label{fig:dual-lattice}
\end{figure}

In the very important case when $T$ is a lattice $A\ZZ^d$ the Poisson Summation Formula in distributional form is
\begin{equation}\label{psf}
\ft{\delta_T} = \frac{1}{\vol T} \delta_{T^*}.
\end{equation}
That is, the Fourier Transform of unit point masses on a lattice $T$ are again point masses on the dual lattice $T^*$ of size $(\vol T)^{-1}$ (see Fig.\ \ref{fig:dual-lattice}. Here $\vol T$ is the volume of the fundamental domain of the lattice and $\vol T = \Abs{\det A}$ if $T = A \ZZ^d$. In this case, therefore, \eqref{supports} is a sufficient condition and we have the equivalence:
\begin{equation}\label{lattice-supports}
f*\delta_T = \mathrm{const.} \iff \ft{f} = 0 \text{ on } T^* \setminus\Set{0}.
\end{equation}
It should be clear of course that the same conclusion as \eqref{lattice-supports} can be drawn simply by the usual theory of multivariate Fourier series (followed perhaps by a linear transformation that will map $\ZZ^d$ to our lattice $T$). In the case $T = \ZZ^d$ the function
\begin{equation}\label{periodization}
F = f*\delta_T = \sum_{t \in \ZZ^d} f(x-t)
\end{equation}
is the so-called \textit{periodization} of $f$ and and is a $\ZZ^d$-periodic function, so it is constant if and only if all its non-constant Fourier coefficients $\ft{F}(n) = \ft{f}(n)$ are 0. This is precisely \eqref{lattice-supports}.

\subsection{Structure results about tilings via the Fourier Transform}\label{ss:structure}

This Fourier view of translational tiling has yielded many results both in the subject of tiling itself and in the Spectral Set problem.

One set of results (the first of which, due to D. Newman \cite{newman1977tesselations}, about tilings of the integers by finite sets, was proved using an easy combinatorial argument) is that one-dimensional tilings are essentially periodic and higher-dimensional tilings also have structure. In \cite{leptin1991uniform,lagarias1996tiling,kolountzakis1996structure} it was proved that whenever $0 \neq f \in L^1(\RR)$ is a function of compact support and $f*\delta_T$ is a tiling of the real line at some level, then $T$ is a finite union of complete arithmetic progressions
$$
T = \bigcup_{j=1}^J (a_j\ZZ+b_j),
$$
for some positive $a_j$ (this is a union as multisets, where multiplicities get added). If the tiling of $f$ by $T$ is \textit{indecomposable} (that is, it cannot be written as a superposition of tilings) then all the $a_j$ are commensurable (they are rational multiples of each other) and the tiling is periodic. The same conclusion holds if $f = \one_E$ is an indicator function of a set of positive and finite measure.

Where does the structure of $T$ (periodicity) come from? Newman's argument in \cite{newman1977tesselations} is essentially an application of the pigeon-hole principle and the fact that any tiling of the integers by a finite set is completely determined if we know it in a finite window (whose length depends on the tile).
In \cite{lagarias1996tiling} this argument was generalized to measurable sets in the line, a much more complicated case. Finally in \cite{kolountzakis1996structure} the problem was reduced to the so-called \textit{idempotent theorem} in Harmonic Analysis \cite{helson1953note,rudin1959idempotent,cohen1960conjecture} which greatly restricts the functions and measures whose Fourier Transform takes only the values 0 or 1 (or some finite set of values more generally).
(The paper \cite{leptin1991uniform}, also solving the same problem with the use of the idempotent theorem, predated the papers \cite{lagarias1996tiling,kolountzakis1996structure} but had gone unnoticed.) 

The idempotent theorem, valid in any locally compact abelian group \cite{cohen1960conjecture} is probably best understood in the case of the torus $\TT$ \cite{helson1953note}.
\begin{theorem}\label{thm:idempotent-torus}
If $\mu \in M(\TT)$ is a measure on $\TT$ whose Fourier coefficients $\ft{\mu}(n)$, $n\in\ZZ$, are either 0 or 1, then the set
$$
\Set{n\in\ZZ: \ft{\mu}(n) = 1}
$$
can be written as
$$
F \triangle \bigcup_{j=1}^J (a_j\ZZ+b_j)
$$
for a finite set $F \subseteq \ZZ$ and for some positive $a_j \in \ZZ$ (here $\triangle$ denotes symmetric difference of sets). In other words this set coincides with a periodic set up to finitely many terms.
\end{theorem}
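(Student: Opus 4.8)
\noindent\emph{Proof idea.} First I would restate the hypothesis in the self-dual language of convolution. Since $\ft{\mu}(n)$ takes only the values $0$ and $1$ we have $\widehat{\mu*\mu}(n)=\ft{\mu}(n)^2=\ft{\mu}(n)$, so $\mu$ is an \emph{idempotent} measure, $\mu*\mu=\mu$. It pays to record the easy direction first, as it pins down exactly which sets $S=\Set{n:\ft{\mu}(n)=1}$ can occur: the atomic measure $\nu_a=\frac1a\sum_{k=0}^{a-1}\delta_{k/a}$ has $\ft{\nu_a}=\one_{a\ZZ}$, multiplication by the character $e_b$ (with $b\in\ZZ$) translates this to $\one_{a\ZZ+b}$, while the density $e_n\,dx$ has Fourier transform $\one_{\Set{n}}$. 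Signed combinations of such elementary idempotents (inclusion--exclusion handling overlaps) together with finitely many of the $e_n\,dx$ produce a finite measure whose transform is the indicator of any set of the stated form. Thus $F\triangle\bigcup_{j=1}^J(a_j\ZZ+b_j)$ are exactly the ``obviously idempotent'' sets, and the theorem asserts the converse.

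My plan is to split the converse into a soft combinatorial part and a hard analytic part. The combinatorial part identifies the target family with the \emph{coset ring} $\mathcal R(\ZZ)$, the smallest Boolean algebra of subsets of $\ZZ$ containing every coset $a\ZZ+b$ (allowing $a=0$, so that singletons, and hence all finite sets, are included). A finite union $\bigcup_j(a_j\ZZ+b_j)$ with $a_j\ge1$ is periodic, and conversely every periodic subset of $\ZZ$ is a finite union of cosets of a single modulus; hence the target family equals $\Set{P\triangle F:\ P\ \text{periodic},\ F\ \text{finite}}$. This family is closed under complementation and finite intersection and contains every coset, so it contains $\mathcal R(\ZZ)$; and since each of its members is a finite Boolean combination of cosets it is contained in $\mathcal R(\ZZ)$. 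Thus the target family is exactly $\mathcal R(\ZZ)$, and it suffices to prove the analytic statement $S\in\mathcal R(\ZZ)$.

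The analytic heart --- that the spectrum of an idempotent measure lies in the coset ring --- is Cohen's idempotent theorem specialised to $\ZZ$ (for $\TT$ it is due to Helson), and this is where essentially all the difficulty sits. The approach I would take is the extremal/norm-minimisation argument. Consider the idempotent measures $\mu$ whose spectrum is \emph{not} in $\mathcal R(\ZZ)$; assuming this class is nonempty, one seeks a member of (near) minimal total variation $\Norm{\mu}$. The strategy is then to show no such minimiser can exist: using convolution with the Haar measure $m_H$ of a finite subgroup $H\subseteq\TT$ (which projects the spectrum onto the annihilator $H^\perp=a\ZZ$) together with multiplication by characters (which translates the spectrum), one either finds that $S$ is invariant under a nontrivial modulus --- forcing $S\in\mathcal R(\ZZ)$ outright --- or one peels off a single coset idempotent to produce a new idempotent $\mu'$ with $\Norm{\mu'}<\Norm{\mu}$ whose spectrum differs from $S$ by a coset; minimality then places $\mu'$, and hence $S$, in the coset ring, a contradiction. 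The crux, and the step I expect to be the genuine obstacle, is the quantitative norm-reduction: showing that the peeling operation strictly decreases $\Norm{\mu}$ and that the process terminates. This rests on the nontrivial ``norm gap'' for idempotents --- a non-elementary idempotent has total variation bounded away from that of a single coset idempotent --- which for general groups is the delicate analytic core of Cohen's theorem. An alternative, more hands-on route specific to $\TT$ is to study the analytic function $F(z)=\sum_{n\ge0}\one_S(n)z^n=\int_\TT(1-ze^{-2\pi i x})^{-1}\,d\mu(x)$ on the unit disc: its coefficients are $0$--$1$ valued, and the obstacle there becomes proving that its representation as a Cauchy integral of a finite measure forces $F$ to be rational, whence the coefficients are eventually periodic.
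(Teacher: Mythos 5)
Your peripheral work is correct and cleanly organized: the translation of the hypothesis into idempotency $\mu*\mu=\mu$, the construction showing that every set of the stated form actually occurs as the spectrum of an idempotent measure, and the verification that the family $\Set{P\triangle F:\ P \text{ periodic},\ F \text{ finite}}$ is exactly the coset ring $\mathcal{R}(\ZZ)$ are all sound. For calibration, though, note that the paper contains no proof of this statement at all: it is quoted as the Helson--Cohen idempotent theorem, with references to \cite{helson1953note} and \cite{cohen1960conjecture}, and the survey moves on. So your proposal is being measured against the literature the paper points to, not against an argument in the paper; and your reduction to ``$S$ lies in the coset ring'' reproduces precisely the form in which the paper states the general theorem.

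The genuine gap is that this reduction \emph{is} the theorem, and neither of your two proposed routes for it is carried out. The statement to be proven is Cohen's idempotent theorem specialized to $G=\TT$ (i.e.\ Helson's theorem), so invoking that theorem is circular --- you implicitly concede this by sketching proofs of it instead --- but in both sketches the decisive step is named as an obstacle rather than overcome. In the norm-minimisation/peeling route, the strict norm decrease when a coset idempotent is removed (the ``norm gap'' for non-elementary idempotents) is not a technicality to be filled in later; it is the entire analytic content of Cohen's proof, and nothing in your outline supplies it or even reduces it to something simpler. In the Cauchy-transform route (which is, in substance, Helson's actual argument, so it is the better bet for $\TT$), what is missing is the chain: (i) $F(z)=\sum_{n\ge 0}\one_S(n)z^n$, being the Cauchy transform of a finite measure, has bounded Nevanlinna characteristic in the disc; (ii) a power series of bounded characteristic with integer coefficients is rational, and since its coefficients are bounded its poles lie at roots of unity, so $\one_S(n)$ is eventually periodic for $n\ge 0$ (and symmetrically for $n\le 0$); (iii) the two one-sided conclusions must then be merged into the stated two-sided form $F\triangle\bigcup_j(a_j\ZZ+b_j)$, which needs an extra argument --- for instance, subtract from $\mu$ the canonical idempotent of the forward periodic set, observe the difference has integer Fourier coefficients vanishing on a half-line, and apply the F.\ and M.\ Riesz theorem plus Riemann--Lebesgue to conclude that difference is a trigonometric polynomial. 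None of (i)--(iii) appears in your proposal. As it stands, it is a correct road map with the destination missing.
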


We call such measures idempotent because all their convolution powers $\mu^{*n} = \mu * \cdots *\mu$ ($n$ times) are identical to $\mu$ (equivalently their Fourier coefficients take the values 0 or 1 only).

For an idempotent measure $\mu$ in a general locally compact abelian group $G$ the set of characters
$$
\Set{\gamma\in\ft{G}:\ \ft\mu(\gamma)=1}
$$
is in the so-called \textit{coset ring} of $\ft{G}$: this is the smallest algebra of subsets of $\ft{G}$ that contains all open cosets of $\ft{G}$, or, anything you can construct starting from some open cosets and performing finitely many set-theoretic operations.

To use the idempotent theorem in the study of tilings we use it in the form of the following theorem of Meyer \cite{meyer1970nombres}.
\begin{theorem}\label{thm:meyer}
Let $A\subseteq \RR$ be a set of bounded density and $\mu = \sum_{a \in A} c_a \delta_a$ be a measure, with $c_a \in S \subseteq \CC\setminus\Set{0}$, a finite set. If $\ft{\mu}$ is locally a measure and
$$
\Abs{\ft{\mu}([-R, R])} = O(R),\ \ \text{ as } R \to +\infty,
$$
then
\begin{equation}\label{coset-ring}
A = F \triangle \bigcup_{j=1}^J (\alpha_j \ZZ+ \beta_j),
\end{equation}
for some positive $\alpha_j$ and a finite set $F \subseteq \RR$.
(We do not view $A$ as a multiset here.)
\end{theorem}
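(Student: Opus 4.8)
The plan is to derive the coset-ring description \eqref{coset-ring} from the idempotent theorem, since the right-hand side of \eqref{coset-ring} is exactly the description of those members of the coset ring of $\RR_d$ (the reals with the discrete topology, whose dual is the Bohr compactification) that are compatible with bounded density: a finite Boolean combination of cosets of subgroups of $\RR$, in which the only subgroups that can occur without violating bounded density are the discrete, hence cyclic, ones $\alpha\ZZ$. So the whole game is to manufacture, out of $\mu$, an honest bounded measure on a compact abelian group whose Fourier--Stieltjes coefficients take only finitely many values, and then feed it to Theorem \ref{thm:idempotent-torus} in its locally compact form, which outputs coset-ring membership in the dual group. The two hypotheses on $\ft{\mu}$ are precisely what make this manufacturing legitimate.

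First I would reduce to the indicator-like case by sorting atoms according to their weight. Writing $A_s = \Set{a : c_a = s}$ for each $s$ in the finite set $S$, we have $A = \bigcup_{s \in S} A_s$ (a disjoint union) and $\mu = \sum_{s \in S} s\, \delta_{A_s}$. The class of sets appearing on the right of \eqref{coset-ring}, namely finite unions of arithmetic progressions altered on a finite set, is closed under finite unions (progressions of incommensurable common differences are allowed, and $F$ absorbs finite discrepancies), so it is enough to place each level set $A_s$ in this class. This is where the finiteness of $S$ enters: it is what lets the transplanted Fourier data be finitely valued, so that the idempotent theorem can be applied.

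Then I would transfer to a compact group. Since $A$ has bounded density and the weights lie in a finite set, $\mu$ is a translation-bounded measure and, by hypothesis, so is $\ft{\mu}$ (locally a measure with $\Abs{\ft{\mu}([-R,R])} = O(R)$). Viewing the weight data $a \mapsto c_a$ (set to $0$ off $A$) as a function on $\RR_d$, I would realize it as the Fourier--Stieltjes transform of a bounded measure $\nu$ on $b\RR$: truncate $\ft{\mu}$ to a large interval, wrap it onto a circle $\RR/(T\ZZ)$ by Poisson summation, and pass to a weak-$*$ limit as $T \to \infty$. The bound $\Abs{\ft{\mu}([-R,R])} = O(R)$ supplies the uniform total-variation control needed for such a limit to exist, while ``locally a measure'' guarantees the truncations are genuine measures. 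The coefficients of $\nu$ then take values in the finite set $S \cup \Set{0}$, so the finitely valued form of the idempotent theorem places each $A_s$ in the coset ring of $\RR_d$; the bounded-density constraint then forces the subgroups occurring to be the cyclic ones $\alpha_j\ZZ$, and unwinding the Boolean combination yields \eqref{coset-ring}.

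The hard part will be this transference step. One must pass from the infinite-mass comb $\mu$ on $\RR$ to a genuine finite measure on a compact group \emph{without smearing out} the finite set of Fourier values in the limit, and it is exactly here that both hypotheses on $\ft{\mu}$ are consumed. Verifying that the weak-$*$ limit exists, that its Fourier coefficients still take only the prescribed finite values, and that bounded density is what collapses arbitrary subgroups down to the cyclic $\alpha_j\ZZ$, are the delicate points; the idempotent theorem itself is then invoked as a black box.
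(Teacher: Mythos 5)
The first thing to note is that the paper contains no proof of this theorem to compare against: it is quoted from Meyer, and the paper's only indication of method is the remark that the proof applies the idempotent theorem on $\ft{\RR_d}$, the Bohr compactification of the line. Your outline follows precisely that route --- decomposition of $A$ into level sets $A_s$ of the weights, transference of $\mu$ to a bounded measure on the Bohr group, the finitely-valued idempotent theorem giving coset-ring membership in $\RR_d$, and a final reduction using bounded density --- so in architecture you agree with the paper and with the actual proofs in the literature it cites.

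That said, as written your transference step would fail, and the two places where all the content of the theorem lives are left unproven. Truncating $\ft{\mu}$ to $[-R,R]$ produces measures of total variation $O(R)$; that is not ``uniform total-variation control,'' and no weak-$*$ limit exists. You must normalize, pushing forward $\frac{1}{2R}\,\ft{\mu}\big|_{[-R,R]}$ under $\RR \hookrightarrow \ft{\RR_d}$; moreover a sharp cutoff is not good enough even after normalizing, because the $a$-th coefficient of the truncated measure is $\sum_{a' \in A} c_{a'}\,\mathrm{sinc}\bigl(2\pi R(a'-a)\bigr)$, whose off-diagonal part is controlled only by a harmonic series $\sum_n \frac{1}{Rn}$, which diverges; one needs a Fej\'er-type weight, whose kernel decays like $\min\bigl(1, (R|x|)^{-2}\bigr)$, to make the tail vanish. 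This is precisely the Wiener-type atom-recovery lemma that constitutes the heart of the transference: one must prove that the coefficients of the limit measure equal $\mu(\Set{a})$, i.e.\ $c_a$ on $A$ and $0$ off $A$, and it is here (not merely in the existence of the limit) that the bounded density of $A$ and the finiteness of $S$ are consumed. You correctly flag this as the hard part, but flagging it is not supplying it. Second, your closing step --- that a bounded-density member of the coset ring of $\RR_d$ must have the form \eqref{coset-ring} --- is itself a nontrivial structure lemma, not a reading-off: elements of the coset ring are Boolean combinations of cosets of arbitrary, possibly dense, subgroups of $\RR$ (dense subgroups can and do appear in intermediate expressions), and one must prove that when the resulting set is locally finite it can be rewritten over cyclic subgroups $\alpha_j\ZZ$ up to a finite symmetric difference. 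Both ingredients are standard, but without them the proposal is a correct plan rather than a proof.
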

It is worth noting that to prove Meyer's theorem (versions of which hold in any dimension) we use the idempotent theorem on $\ft{\RR_d}$, the group of continuous characters (dual group) of $\RR_d$, the real line $\RR$ equipped with the discrete topology. This is the so-called Bohr group or Bohr compactification of the real line, which we can view as an augmentation of $\RR$ which is such that the continuous functions on it are uniform limits of the usual trigonometric polynomials. This group is central in the theory of \textit{almost periodic functions} \cite{besicovitch1954almost,katznelson2004introduction}.

Assume now that $f$ is integrable and has compact support.
(See also \cite{kolountzakis2016non,kolountzakis2021tiling,lev2022example} for subtleties that may arise if the tile $f$ is not of compact support and also \cite{kolountzakis2019structure} for an analysis of the structure of \textit{multilpicative} tilings of the real line.)
Compact support of $f$ implies that its Fourier Transform $\ft{f}$ is analytic and therefore the set of Fourier zeros of $f$
$$
Z(f) = \Set{\ft{f} = 0}
$$
is a discrete set in $\RR$. Elementary complex analysis in the form of Jensen's formula implies that the number of zeros of $\ft{f}$ in $[-R, R]$ grows at most linearly in $R$. So we know that in a tiling of $f$ with $T$, from \eqref{supports}, $\ft{\delta_T}$ is a sum of distributions of point support at the points comprising $Z(f)$. By the growth of $T$ (bounded density) and a simple duality argument it follows that these distributions are actually point masses at $Z(f)$, uniformly bounded, so we can apply Theorem \ref{thm:meyer} to the measure $\mu = \delta_T$ and obtain the promised structure of $T$. The finite set $F$ in \ref{coset-ring} is easily seen to be empty (otherwise the support of $\ft{\delta_T}$ could not possibly be a discrete set).

\subsection{Structure of tilings in higher dimension}\label{ss:structure-plane-space}

The use of the idempotent theorem has led to structural results about translational tilings also in dimension 2 \cite{kolountzakis2000structure} and dimension 3 \cite{gravin2011translational,gravin2012structure} when the tile is a polygon or a polytope. In dimension $d\ge 2$ the situation becomes much more complicated than in dimension 1. The main reason is that the set of Fourier zeros $Z(f)$ of a function $f \in L^1(\RR^d)$ is no more a discrete set but rather a set of codimension 1. This allows for a much greater variety of tempered distributions to be supported on it (according  to \eqref{supports}) so new tricks are needed to derive structure of tilings. This structure is naturally much more flexible than the rigid situation we face in dimension 1.

\begin{figure}[h]
\input 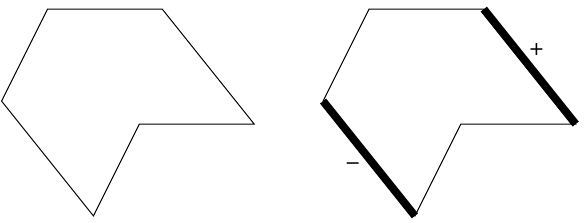_t
\caption{The indicator function of the polygon on the left has many Fourier zeros. If tiling occurs then also the edge measures that charge a pair of parallel edges with plus and minus their arc-length also tile, for every edge direction. Thus $\ft{\delta_T}$ lives on the intersection, as the direction of parallel edges changes, of all their Fourier zeros, typically a much smaller set.}\label{fig:polygon}
\end{figure}

Suppose for instance we are trying to decide if a polygon $E$ in the plane can tile by translations, at any (necessarily) integer level (see Fig.\ \ref{fig:polygon}). To use condition \eqref{supports} we should know about what tempered distributions are supported on the Fourier zeros $Z(\one_E)$. This is a union of some curves in the plane, and these support a great variety of tempered distributions, unlike the case of dimension 1, where the Fourier zeros were isolated points, and points can only support point masses and their derivatives.

The following observation saves us, at least in some nice cases in dimension 2. If $E$ tiles when translated by $T$, and $E$ contains only edges that come in parallel pairs (for instance, if $E$ is a symmetric convex polygon), then, it is intuitively clear, in the tiling of $E$ by $T$ each edge can only be ``countered'' in the tiling by its opposite edge (the one parallel to it and facing in the opposite direction) and these two edges must have the same length (else the shorter edges will not be able to counter the longer edge over a large tiled area). Fixing such an edge pair we can look at the measure $\mu$ in the plane which charges one edge by its arc-length measure and the opposite edge by negative its arc-length measure. See Fig.\ \ref{fig:polygon}. Then the translates of $\mu$ along $T$ completely kill each other. In other words $\mu$ tiles with $T$ at level 0, or $\mu*\delta_T = 0$, which, like in \eqref{supports}, leads us to the condition
\begin{equation}\label{edge-measures}
\supp{\ft{\delta_T}} \subseteq Z(\mu) = \Set{\ft{\mu} = 0}.
\end{equation}

\begin{figure}[h]
\ifdefined\SMART\resizebox{10cm}{!}{\input 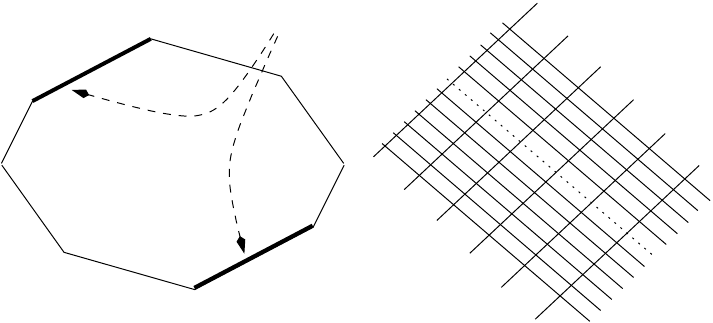_t}\else
\input 2dpoly-zeroset.pdf_t
\fi
\caption{The Fourier zeros of the edge measure $\mu$ defined on two opposite edges of the polygon $P$ is shown on the right. It consists of a union of two sets of parallel lines.}\label{fig:2dpoly-zeroset}
\end{figure}

But, it is easy to see, $Z(\mu)$ consists of a two collections of parallel lines, an easy set to work with. See Fig.\ \ref{fig:2dpoly-zeroset}. What is more important, \eqref{edge-measures} must hold for \textit{all parallel edge pairs} in the polygon $E$, so that $\ft{\delta_T}$ is supported on the intersection of all the Fourier zeros of edge measure pairs, and, generically, even inresecting two of them (two pairs of collections of parallel lines)) gives us a collection of isolated points, so that Meyer's theorem can now be applied essentially as in the case of tiling in dimension 1.

\subsubsection{Tilings of the line by translation and reflection}\label{sss:reflections}

What if one allows not just translations but reflections of $A$ as well in the tiling of $\ZZ$?

In \cite{lagarias1996tiling} it is pointed out that the set $A = \Set{0, 1, 5}$ can give two different tilings of the set $\Set{0, 1, \ldots, 8}$ if translations and reflections are allowed. Since we can tile $\ZZ$ by translates of $\Set{0, 1, \ldots, 8}$ and we can arbitrarily choose how (in which of the two available ways) we tile each translate with copies (translations and/or reflections) of $A$ it follows that we cannot expect such translation/reflection tilings to be periodic.

\begin{figure}[h]
\ifdefined\SMART\resizebox{10cm}{!}{\input 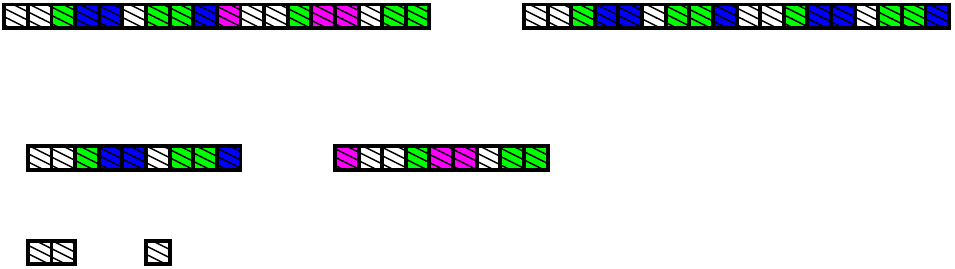_t}\else
\input reflections.pdf_t
\fi
\caption{
The set $A = \Set{0, 1, 5} \subseteq \ZZ$ shown at the bottom, can tile using translations \textit{and reflections} the interval $\Set{0, 1, \ldots, 8}$ in two different ways, shown in the middle row above. Each copy used is drawn in a different color (in fact these two ways are reflections of each other).
In the top row we show two fragments of tilings of the line by using one of the two tiled intervals of the middle row at will. In the top left fragment we have used the first tiled interval and then the second, and in the top right fragment we have used the first tiled interval twice. Notice that these two fragments are indeed different tilings of $\ZZ$ as they show different patterns. In the top right fragment we only observe two points of the same tile followed by one point of another tile, while in the top left we can also see a single point of one tile followed by a single point of another tile.
}\label{fig:reflections}
\end{figure}

But lack of periodicity does not necessarily mean lack of structure.

\qq{
What kind of structure may we expect from a translation/reflection tiling of $\ZZ$ by a finite set $A$?

In particular is it true that non-periodicity implies that there is a translational tile $A'$ of $\ZZ$ that can be tiled in two or more different ways by copies (translations and reflections) of $A$?

The set $A'$ here is the analogue of the set $\Set{0, 1, \ldots, 8}$.
}

There are two ways to view this question:
\begin{enumerate}
\item \textbf{Every tiling has structure.}\\
Fix a tiling $\mathcal{T}$ by $A$ which uses translations and reflections. Is it true that
\begin{quote}
{\bf Property S}:
there is a translational tile $A'\subseteq\ZZ$ which tiles $\ZZ$ by a set of translates $B'$, such that every translate
$$
A'+b', \text{ for some } b' \in B',
$$
is tiled (with translations and reflections) by $A$ in the tiling $\mathcal{T}$?
\end{quote}
\item \textbf{Every tile has a structured tiling.}\\
If $A$ can tile $\ZZ$ using translations and reflections then is there such a tiling $\mathcal{T}$ using $A$ which has Property S (as above)?
\end{enumerate}

\subsection{Spectrality as a tiling question}\label{ss:spectrality-as-tiling}

Having discussed translational tiling somewhat at length let us now return to the Spectral Set question: why make the conjecture that the spectral sets are precisely the sets that can tile? What could possible connect the two concepts? 

Suppose $f \in L^2(E)$ and suppose also that the set of exponentials
$$
E(\Lambda) = \Set{e_\lambda(x):\ \lambda \in \Lambda},
$$
for some $\Lambda \in \RR^d$, is orthogonal in $E$. Bessel's inequality in the Hilbert space $L^2(E)$ then becomes
$$
\sum_{\lambda \in \Lambda} \Abs{\inner{f}{e_\lambda}}^2 \le \vol(E) \Norm{f}_2^2 
$$
If $E(\Lambda)$ is complete in $L^2(E)$ as well as orthogonal then Bessel's inequality becomes an equality, and this is equivalent to completeness.

Assuming completeness then apply this now to $f(x) = e_t(x)$ for an arbitrary frequency $t \in \RR^d$ to get
\begin{equation}\label{spectrum-as-tiling}
\sum_{\lambda\in\Lambda} \Abs{\ft{\one_E}}^2 (t-\lambda) = \vol(E)^2.
\end{equation}
But this equation precisely means that the integrable, nonnegative function
$\Abs{\ft{\one_E}}^2$ (the \textit{power spectrum} of $\one_E$)
tiles $\RR^d$ when translated at the locations $\Lambda$ at level $\vol(E)^2$. And it is not hard to see, due essentially to the density of trigonometric polynomials in $L^2(E)$, that \eqref{spectrum-as-tiling} implies $\sum_{\lambda \in \Lambda} \Abs{\inner{f}{e_\lambda}}^2 = \vol(E) \Norm{f}_2^2$ for any $f \in L^2(E)$ (completeness).

\begin{figure}[h]
\ifdefined\SMART\resizebox{10cm}{!}{\input 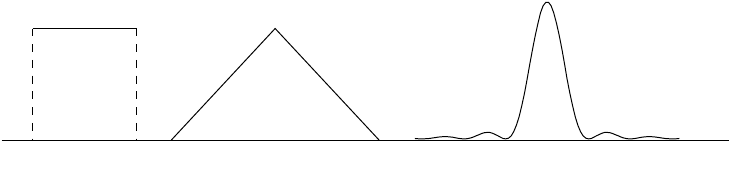_t}\else
\input fuglede-interval.pdf_t
\fi
\caption{The functions $\one_E$, $\one_E*\one_{-E}$, $\Abs{\ft{\one_E}}^2$ when $E$ is an interval.}\label{fig:fuglede-interval}
\end{figure}

We may thus restate the Fuglede conjecture as follows:
\begin{quotation}
\textbf{Fuglede or Spectral Set conjecture}: $E$ tiles $\iff$ $\Abs{\ft{\one_E}}^2$ tiles.
\end{quotation}
In this new form we hope that the conjecture makes a little more sense.

At this point, having interpreted spectrality as the tiling \eqref{spectrum-as-tiling}, we can easily derive some properties that the spectrum $\Lambda$ must have if it exists. The first property is that it must have a density and this density is equal to $\vol(E)$. More precisely, we have
that for any $\epsilon>0$ there exists a $R_0$ such that whenever $R>R_0$ the number $n$ of points of $\Lambda$ in any ball $B$ of radius $R$ satisfies
$$
(\vol(E)-\epsilon)\, \vol(B) \le n \le (\vol(E)+\epsilon)\, \vol(B).
$$
This is nice: the density of the spectrum of $E$ is equal to the volume of $E$.

Another property of $\Lambda$, easily derivable from orthogonality alone, is that the distance $\Abs{\lambda-\mu}$, $\lambda, \mu \in \Lambda$, is bounded below by a positive number. Indeed, we know that orthogonality of $e_\lambda$ and $e_\mu$ over $E$ is equivalent to the vanishing
$$
\ft{\one_E}(\lambda-\mu) = 0.
$$
But $\ft{\one_E}(0) = \vol(E) > 0$ and $\ft{\one_E}$ is a continuous function, so there is $r>0$ such that $\Abs{\ft{\one_E}(x)}^2$ is positive in $\Set{\Abs{x}\le r}$,
so $\Abs{\lambda-\mu} > r$.

We often say that the spectrum is a \textit{well distributed set}: a set whose elements are at least a positive distance apart and does not have arbitrarily large gaps: there is a positive $R$ such that every ball of radius $R$ contains at at least one point of the spectrum.

\subsection{Proof of the lattice Fuglede Conjecture}\label{ss:lattice-fuglede}

Let us now prove Theorem \ref{thm:lattice-fuglede}: the set $E$ lattice-tiles $\RR^d$ if and only if it has a lattice spectrum.

Let us assume for simplicity that $\vol(E) = 1$ so that spectrality is equivalent to the tiling $\Abs{\ft{\one_E}}^2 * \delta_{T^*} = 1$, where $T^*$ is a lattice. One immediate consequence of this is that the density of $T^*$ (and, hence, also that of $T$) is 1. This is intuitively obvious (in any tiling the product of the integral of the tile and the density of the translates must equal the level of the tiling -- compare the volume of a large tiled ball with the sum of the volumes of the tiles contributing to tiling that ball) but see also \cite{kolountzakis1996structure}.

Since, by the Poisson Summation Formula \eqref{psf}, $\ft{\delta_{T^*}}$ is a measure we have that spectrality is equivalent, according to \eqref{supports}, to $T \subseteq \Set{0} \cup Z(\Abs{\ft{\one_E}}^2)$. But the Fourier Transform of $\Abs{\ft{\one_E}}^2$ is $\one_E * \one_{-E}$, whose zero set is $(E-E)^c$ (it is easier to think of the case of an open set $E$ here, as this removes some null-set considerations from the argument). So we conclude that spectrality is equivalent to 
\begin{equation}\label{T-packing}
T \cap (E-E) = \Set{0}.
\end{equation}
Since $T$ is a lattice we have $T = T-T$ and condition \eqref{T-packing} is easily seen to be equivalent to packing: the translates $E+t$, $t \in T$, are disjoint. If these translates failed to cover everything and left some set of positive measure uncovered, this ``hole'' would repeat itself indefinitely for this is a periodic arrangement since $T$ is a lattice. This in turn would imply that $\vol(E) \cdot \dens(T) < 1$, which clearly cannot happen.

\subsection{Filling a box with bricks of two types}\label{ss:bricks}

Finally, let us point to an amusing application of the Fourier method for tilings \cite{bower2004box,kolountzakis2004box}. Suppose we are given a rectangular box in $\RR^d$ and we are to fill it exactly (a tiling) by using only two kinds of bricks of dimensions, say, $a_1 \times \cdots \times a_d$ and $b_1 \times \cdots \times b_d$. We have an infinite supply of both kinds of bricks but we are only allowed to translate them, not turn them in any way. In other words the bricks we are using are always parallel to each other and parallel to the box that is to be filled.

\begin{figure}[h]
\input 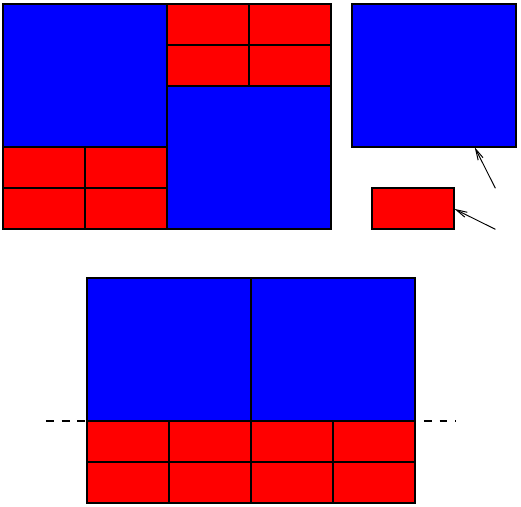_t
\caption{A box $A$ can filled exactly with two type of bricks, $A$ and $B$, if and only if it can be cut in two parts each of which can be filled with bricks of one type only.}\label{fig:box-filled}
\end{figure}

Using Fourier Analysis one can prove the following, first shown without Fourier Analysis in \cite{bower2004box} (see Fig.\ \ref{fig:box-filled}).
\begin{theorem}\label{thm:bricks}
A rectangular $d$-dimensional box can be filled exactly with bricks of two different kinds $A$ and $B$ if and only if the box can be cut into two rectangular boxed along one of its sides so that brick type $A$ can fill exactly one part and brick type $B$ can fill exactly the other part.
\end{theorem}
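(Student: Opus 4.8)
The \textbf{easy direction} is immediate: if the box splits along a hyperplane into two sub-boxes, one tiled by copies of $A$ and the other by copies of $B$, then juxtaposing the two partial fillings fills the whole box. So the content is the forward implication, and here is how I would attack it with the Fourier method of \S\ref{ss:fourier-tiling}. Write the box as $Q=\prod_{j=1}^d[0,c_j]$ and the bricks as $A=\prod_j[0,a_j]$, $B=\prod_j[0,b_j]$. A filling records two finite translation multisets $T_A,T_B$ with $\sum_{t\in T_A}\one_A(x-t)+\sum_{t\in T_B}\one_B(x-t)=\one_Q(x)$. Taking Fourier Transforms and using that the transform of a box factors across coordinates, $\ft{\one_A}(\xi)=\prod_j\phi_{a_j}(\xi_j)$ with $\phi_a(s)=\frac{1-e^{-2\pi i a s}}{2\pi i s}$ (and similarly for $B$ and $Q$), I would obtain the identity of entire functions
\begin{equation}\label{eq:brick-master}
\ft{\one_A}(\xi)\,P_A(\xi)+\ft{\one_B}(\xi)\,P_B(\xi)=\ft{\one_Q}(\xi),
\end{equation}
where $P_A(\xi)=\sum_{t\in T_A}e^{-2\pi i t\cdot\xi}$ and $P_B$ are finite exponential sums. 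This is the analogue of \eqref{ft-product} specialized to bricks, and the whole point is that the product structure lets me isolate one coordinate at a time.

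The key reduction is to restrict the frequency $\xi$ to each coordinate axis. Setting $\xi_j=0$ for $j\ne k$ and writing $s=\xi_k$ kills the transverse factors (since $\phi_{a_j}(0)=a_j$), leaving the one-dimensional identity
\begin{equation}\label{eq:brick-1d}
\Big(\prod_{j\ne k}a_j\Big)\phi_{a_k}(s)\,Q_A(s)+\Big(\prod_{j\ne k}b_j\Big)\phi_{b_k}(s)\,Q_B(s)=\Big(\prod_{j\ne k}c_j\Big)\phi_{c_k}(s),
\end{equation}
where $Q_A(s)=\sum_{t\in T_A}e^{-2\pi i t_k s}$. Inverting \eqref{eq:brick-1d} (this is just Fubini, integrating the filling identity over the transverse coordinates) says that, slicing $Q$ by hyperplanes perpendicular to the $k$-th axis, the cross-sectional area contributed by the $A$-bricks meeting a slice (each contributing $\prod_{j\ne k}a_j$) plus that of the $B$-bricks (each contributing $\prod_{j\ne k}b_j$) equals the box cross-section $\prod_{j\ne k}c_j$ at every level $x_k\in(0,c_k)$. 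Thus in each direction I get a weighted tiling of the interval $[0,c_k]$ by the two interval-lengths $a_k,b_k$ with fixed cross-sectional weights.

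The heart of the matter is to extract from these $d$ one-dimensional identities a single direction in which the two brick types separate. Comparing the zero hyperplanes of the three factors in \eqref{eq:brick-master} --- $\ft{\one_A}$ vanishes where $\xi_k\in a_k^{-1}\ZZ\setminus\Set{0}$, $\ft{\one_B}$ where $\xi_k\in b_k^{-1}\ZZ\setminus\Set{0}$, and $\ft{\one_Q}$ where $\xi_k\in c_k^{-1}\ZZ\setminus\Set{0}$ --- and using that $P_A,P_B$ are entire and bounded on $\RR^d$, I would read off commensurability relations among $a_k,b_k,c_k$ in each coordinate. Feeding these back into the weighted one-dimensional tilings, I expect to conclude that in at least one coordinate $k$ there is a value $\theta\in(0,c_k)$ with every $A$-brick contained in $\Set{x_k\le\theta}$ and every $B$-brick in $\Set{x_k\ge\theta}$. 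Cutting $Q$ at $x_k=\theta$ then yields two sub-boxes, one tiled purely by $A$ and one purely by $B$, which is the assertion.

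The \textbf{main obstacle} is exactly this last separation step. At the one-dimensional level the two interval types can genuinely interleave (a pattern such as $A,B,A$ along one axis), so no single direction is singled out automatically. Interleaving along the $k$-th axis, however, forces a coincidence between the transverse cross-sections $\prod_{j\ne k}a_j$ and $\prod_{j\ne k}b_j$ (they must enter in a balanced, commensurable way to keep the slice level constant as the crossing bricks change type); the crux is to show that such degenerate coincidences cannot hold simultaneously in every direction, by playing the constraint in one coordinate against those coming from the others. Making this bookkeeping rigorous --- essentially a number-theoretic induction on the dimension, or on the number of bricks, using \eqref{eq:brick-1d} --- is where the real work lies, and it is what replaces the purely combinatorial argument of \cite{bower2004box}.
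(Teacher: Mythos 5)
Your easy direction is fine, and your master Fourier identity (the analogue of \eqref{ft-product} for the filling) is exactly the starting point that the paper has in mind --- note that the paper itself only sketches this and defers all details to \cite{kolountzakis2004box}, emphasizing that the characterization of $Z(\one_C)$ for a box $C$ is the central tool. But your hard direction has a genuine gap, and in fact the statement you set out to prove in the final step is \emph{false}. You aim to show that the \emph{given} filling separates: that for some coordinate $k$ and some $\theta$ every $A$-brick lies in $\Set{x_k\le\theta}$ and every $B$-brick in $\Set{x_k\ge\theta}$. Your own interleaving example already refutes this: in dimension $1$ take $c=3$, $a=b=1$ and place the bricks in the pattern $A,B,A$. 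The theorem does not claim that every filling is separated; it claims that the box then \emph{admits some other, separated} filling. So the ``bookkeeping'' you defer to the end cannot be carried out as formulated --- the argument must construct a new filling, not disentangle the old one.

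The missing idea is that the useful consequence of the Fourier identity comes from frequencies with nonzero entries in \emph{two different} coordinates, which is precisely the information your axis restrictions throw away (they only yield the cross-sectional Fubini identities, which are satisfied by interleaved fillings as well). Since $\ft{\one_A}\,P_A+\ft{\one_B}\,P_B=\ft{\one_Q}$ forces $Z(\one_A)\cap Z(\one_B)\subseteq Z(\one_Q)$, pick $i\ne j$, set $\xi_j=k/a_j$ and $\xi_i=l/b_i$ with $k,l$ nonzero integers, and choose all other coordinates generic; then $\ft{\one_Q}(\xi)=0$ forces $k\,c_j/a_j\in\ZZ$ or $l\,c_i/b_i\in\ZZ$, and quantifying over $k,l$ gives: for every pair $i\ne j$, either $c_j/a_j\in\ZZ$ or $c_i/b_i\in\ZZ$. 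Consequently the ``bad'' sets $S_A=\Set{j:\ c_j/a_j\notin\ZZ}$ and $S_B=\Set{i:\ c_i/b_i\notin\ZZ}$ are either empty or both equal to one and the same singleton $\Set{k}$. If one of them is empty, one brick type fills the whole box and the cut is trivial. Otherwise, restrict the given filling to a generic line parallel to $e_k$: the bricks crossing it partition a segment of length $c_k$ into intervals of lengths $a_k$ and $b_k$, so $c_k=m\,a_k+n\,b_k$ with $m,n$ nonnegative integers; cutting at $x_k=m\,a_k$ yields two boxes, the first fillable by a grid of $A$-bricks alone and the second by a grid of $B$-bricks alone, since $a_j\mid c_j$ and $b_j\mid c_j$ for all $j\ne k$. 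This cross-coordinate dichotomy --- not per-coordinate commensurability, and not the projected weighted tilings --- is what rules out the interleaving scenario you correctly identify as the main obstacle, and it produces the separated filling directly rather than extracting it from the given one.
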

As shown in Fig.\ \ref{fig:three} Theorem \ref{thm:bricks} fails if we allow for three types of bricks, even in dimension 2.

\begin{figure}[h]
\input 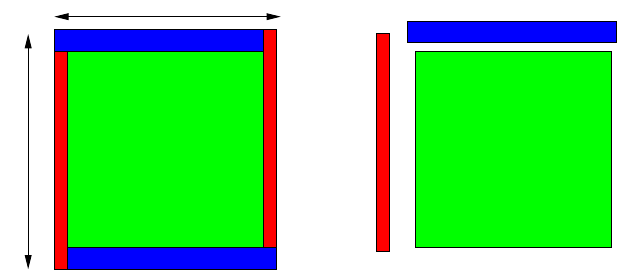_t
\caption{Theorem \ref{thm:bricks} fails if we are using three types of bricks in order to fill a box, even in dimension 2. The $1\times 1$ box on the left can be filled with the three types of bricks shown, but cannot be decomposed into smaller boxes each of which can be filled with fewer types of bricks (by inspection).}\label{fig:three}
\end{figure}

\begin{figure}[h]
\input 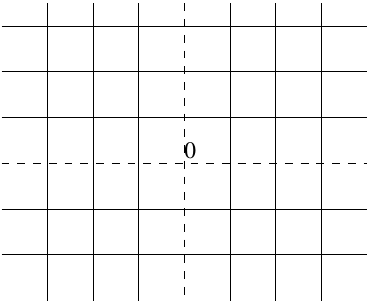_t
\caption{The Fourier zeros of the unit square in the plane is two collections of parallel lines, parallel to the axes, of spacing 1. The axes themselves are missing except for the integer points.}\label{fig:zeros}
\end{figure}

The Fourier Transform of the indicator function of the box
$$
C = \left(-\frac{c_1}{2}, \frac{c_1}{2}\right) \times \cdots \times \left(-\frac{c_d}{2}, \frac{c_d}{2}\right)
$$
is easily seen to be the function
$$
\ft{\one_C}(\xi) = \frac{\sin(\pi c_1 \xi_1)}{\xi_1} \cdots \frac{\sin(\pi c_d \xi_d)}{\xi_d}
$$
so that the Fourier zeros of $\one_C$ are exactly the $\xi \in \RR^d$ such that for some $j=1, 2, \ldots, d$ the coordinate $\xi_j$ is a \textit{non-zero} multiple of $\frac{1}{c_j}$. This simple characterization is central to many results about rectangle tilings or rectangle spectra \cite{kolountzakis1998notched,lagarias2000orthonormal,iosevich1998spectral,kolountzakis2000packing}. See \cite{kolountzakis2004box} for the details of the proof of Theorem \ref{thm:bricks}.

\section{Fuglede Conjecture prehistory}


In this section we describe several of the results that were given before the Fuglede conjecture was disproved in its generality in dimension at least 3 \cite{tao2004fuglede,kolountzakis2006tiles,kolountzakis2006hadamard,farkas2006onfuglede,farkas2006tiles}. The exceptions are \S \ref{ss:periodicity-of-spectrum} and \S \ref{ss:product} which refer to results obtained after the disproof in 2004. All the results that we are describing here were in the direction of supporting the conjecture. Most of them were of the form: if sets that tile have a property $P$ then so do spectral sets, or vice versa.

\subsection{Non-symmetric convex bodies are not spectral}\label{ss:nonsym}

Our first result has to do with convex sets. It was known since the time of Minkowski that for a convex body to tile $\RR^d$ it is necessary that it is symmetric about a point. This point can be taken to be $0 \in \RR^d$ so in that case symmetric $K$ means $K = -K$.

Let us see the proof. Suppose $K$ is a convex body in $\RR^d$ and suppose also that $K$ tiles when translated at the locations $T\subseteq\RR^d$. By just packing (non-overlapping copies) we have $(K-K) \cap (T-T) = \Set{0}$. Since $K-K$ is again a convex body, symmetric this time about 0, it follows that if $L=\frac12 (K-K)$ we have $L-L = K-K$ hence $L$ also packs when translated at $T$.

By the Brunn-Minkowski inequality \cite{gardner2002brunn} we have for the convex body $K$ that
$$
\vol(\frac12(K-K)) \ge \vol(K)
$$
with equality if and only if $K$ is symmetric.

Thus if $K$ is non-symmetric it follows that $\vol(L) > \vol(K)$. But this makes the packing of $L$ by translates at $T$ impossible: since $K$ tiles with $T$ we have that $\vol(K)\cdot\dens(T)=1$ and in the packing by $L$ we must have $\vol(L)\cdot\dens(T)\le 1$, a contradiction, so $K$ must be symmetric.

This is also true for spectral sets \cite{kolountzakis2000nonsymmetric}:
\begin{theorem}\label{thm:nonsym}
If $E$ is bounded, open, convex and nonsymmetric then it is not spectral. 
\end{theorem}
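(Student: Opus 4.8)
The plan is to run the Brunn--Minkowski argument of the tiling case (just above), with the packing step replaced by a density estimate for the spectrum viewed as a sampling set. Normalize $\vol(E)=1$ and suppose, for contradiction, that $E$ is spectral with spectrum $\Lambda$; by the density computation above, $\Lambda$ has uniform density $\dens\Lambda=\vol(E)=1$. The whole proof reduces to establishing the single inequality $\vol\!\left(\tfrac12(E-E)\right)\le\vol(E)$, since combined with Brunn--Minkowski, $\vol\!\left(\tfrac12(E-E)\right)\ge\vol(E)$ with equality only when $E$ is symmetric, this forces equality, hence symmetry of $E$, contradicting the hypothesis.

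The heart of the matter is converting orthogonality into that volume bound. Unlike the tiling case, orthogonality does not make the translates $E+\lambda$ disjoint, so there is no direct packing; instead I would pass to the Fourier side via the power spectrum. Completeness gives the tiling $\Abs{\ft{\one_E}}^2*\delta_\Lambda=\vol(E)^2=1$ from \eqref{spectrum-as-tiling}. Now $\Abs{\ft{\one_E}}^2=\ft{g}$ with $g=\one_E*\one_{-E}$ the covariogram of $E$, and $g$ is continuous, nonnegative, and strictly positive exactly on the (open, since $E$ is open) difference set $E-E$, vanishing off it. Hence the zero set of $\ft{\left(\Abs{\ft{\one_E}}^2\right)}=g$ is $\RR^d\setminus(E-E)$, and the support condition \eqref{supports} applied to the tile $\Abs{\ft{\one_E}}^2$ yields $\supp\ft{\delta_\Lambda}\cap(E-E)\subseteq\Set{0}$. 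Using that $\Lambda$ is well distributed of bounded density, so that $\ft{\delta_\Lambda}$ is locally a measure exactly as in the setting of Theorem \ref{thm:meyer}, one upgrades this to the statement that on $E-E$ the distribution $\ft{\delta_\Lambda}$ equals a single point mass at the origin, with coefficient $1$ (matching $g\cdot\ft{\delta_\Lambda}=\delta_0$ and $g(0)=1$).

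With this in hand I would test $\ft{\delta_\Lambda}$ against functions $\eta*\tilde\eta$, where $\eta\in L^2$ is compactly supported inside $\tfrac12(E-E)$ and $\tilde\eta(x)=\overline{\eta(-x)}$. Then $\eta*\tilde\eta$ is supported in $\tfrac12(E-E)+\tfrac12(E-E)=E-E$ and $\widehat{\eta*\tilde\eta}=\Abs{\ft{\eta}}^2$. Pairing, and using that only the peak at $0$ survives, gives $\sum_{\lambda\in\Lambda}\Abs{\ft{\eta}(\lambda)}^2=(\eta*\tilde\eta)(0)=\Norm{\eta}_2^2$; by Plancherel this reads $\sum_{\lambda\in\Lambda}\Abs{F(\lambda)}^2=\Norm{F}_2^2$ for $F=\ft{\eta}$, first for a dense class and then, by density, for every $F$ in the Paley--Wiener space of functions whose Fourier transform is supported in $\tfrac12(E-E)$. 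Thus $\Lambda$ is a (tight) set of stable sampling for that space, and Landau's density theorem forces $\dens\Lambda\ge\vol\!\left(\tfrac12(E-E)\right)$. Since $\dens\Lambda=\vol(E)=1$, this is precisely the inequality $\vol\!\left(\tfrac12(E-E)\right)\le\vol(E)$ needed to close the argument.

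The step I expect to be the main obstacle is the middle one: showing that the restriction of $\ft{\delta_\Lambda}$ to the open set $E-E$ is exactly the point mass $\delta_0$, and not a higher-order distribution concentrated at the origin. Because the covariogram $g$ is merely Lipschitz, one cannot simply divide by it, so the derivative-of-$\delta_0$ terms must be excluded by a genuine regularity input; the natural fix is to invoke the bounded density and well-distribution of the spectrum (as in the discussion preceding Theorem \ref{thm:meyer}) to guarantee that $\ft{\delta_\Lambda}$ is locally a measure. Once that is granted, the remaining ingredients, Brunn--Minkowski with its equality characterization and the Landau lower bound for sampling sets, are standard.
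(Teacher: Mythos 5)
Your proof is correct in outline, and it is genuinely fillable, but its endgame is different from the paper's. Both arguments share the same skeleton: the tiling \eqref{spectrum-as-tiling}, the support condition \eqref{supports} applied to $\Abs{\ft{\one_E}}^2$ (whose Fourier transform $\one_E*\one_{-E}$ vanishes exactly off $E-E$), and the equality case of Brunn--Minkowski. The paper, however, avoids ever identifying the local structure of $\ft{\delta_\Lambda}$ on $E-E$: it shrinks the half-difference body $H=\frac12(E-E)$ to $\alpha H$ with $\alpha<1$, sets $\ft{g}=\one_{\alpha H}*\one_{\alpha H}$, so that $\supp \ft{g} = \alpha(E-E)$ stays at positive distance from $\supp\ft{\delta_\Lambda}\setminus\Set{0}$; then $\ft{g}$ annihilates $\ft{\delta_\Lambda}$ away from the origin \emph{regardless of the local order of that distribution}, so $g$ tiles with $\Lambda$ at level $\int g\cdot\dens\Lambda=\vol(\alpha H)$, and the elementary fact that a nonnegative tile cannot exceed its tiling level yields the contradiction $\vol(\alpha H)^2=g(0)\le\vol(\alpha H)$. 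You instead work with the full half-body, which forces you to pin down $\ft{\delta_\Lambda}$ on all of $E-E$, and you then invoke Landau's density theorem rather than the pointwise bound. Both of your delicate steps do work: a distribution which is the Fourier transform of a bounded-density point measure and is supported (after restriction to the open set $E-E$) at the single point $0$ must be a constant multiple of $\delta_0$ --- pair it with modulations $\psi(\xi)e^{2\pi i t\cdot\xi}$ of a fixed $\psi\in C_c^\infty$ supported near $0$; derivative-of-$\delta_0$ terms produce polynomial growth in $t$ while bounded density keeps $\sum_{\lambda}\ft{\psi}(\lambda-t)$ bounded, so all higher-order coefficients vanish --- and the Parseval identity proved on a dense class of $\eta$ extends to the whole Paley--Wiener space by continuity of both sides in $L^2$. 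Two cautions, though. First, your phrasing that bounded density guarantees $\ft{\delta_\Lambda}$ is ``locally a measure'' is false as a general principle; what is true, and all you need, is the single-point statement above, which is exactly the ``simple duality argument'' of \S\ref{ss:structure}. Second, the identification of the coefficient as $1$ by ``dividing'' by the merely Lipschitz covariogram is shaky as written, but also unnecessary: the coefficient equals $\dens\Lambda$ by averaging in $t$, and in any case \emph{any} positive constant $c$ in $\sum_{\lambda}\Abs{\ft{\eta}(\lambda)}^2=c\Norm{\eta}_2^2$ makes $\Lambda$ a sampling set, after which Landau gives $1=\dens\Lambda\ge\vol\left(\tfrac12(E-E)\right)$, contradicting Brunn--Minkowski. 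In exchange for confronting the distribution-order issue head-on and importing Landau's (nontrivial) theorem, your route makes the conceptual content transparent --- a spectrum of $E$ is a tight sampling set for the larger space attached to $\frac12(E-E)$ --- whereas the paper's $\alpha$-shrinking trick keeps the whole proof elementary and self-contained within the tiling formalism.
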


\begin{proof}

Suppose that $K$ has measure 1 (this is no restriction of course) is convex, non-symmetric and spectral, with spectrum $\Lambda \subseteq \RR^d$. Then, from \eqref{spectrum-as-tiling}, we have the tiling
$$
\sum_{\lambda \in \Lambda} \Abs{\ft{\one_K}}^2(x-\lambda) = 1,
$$
for almost all $x \in \RR^d$. By our condition \eqref{supports} and writing $f=\Abs{\ft{\one_K}}^2$ we then have
$$
\supp \ft{\delta_\Lambda} \subseteq \Set{\ft{f} = 0} \cup \Set{0}.
$$
But $\ft{f} = \one_K*\one_{-K}$ so that $\Set{\ft{f}=0} = (K-K)^c$.

By the non-symmetry of $K$ and writing $H=\frac12(K-K)$ we have $\vol(H) > \vol(K) = 1$ by the equality case in the Brunn-Minkowski inequality once again. But the support of the function $\one_H*\one_H$ is $H+H=K-K$. Take a number $\alpha<1$ very close to 1 and define $\ft{g} = \one_{\alpha H} * \one_{\alpha H}$ which has support $\alpha(K-K)$ a set which a positive distance from $(K-K)^c$, the support of $\ft{\delta_\Lambda}$. We claim that the function $g$ tiles with the translation set $\Lambda$. The reason is that \eqref{supports} holds in this case with room to spare. Not only is $\supp\ft{\delta_\Lambda}$ contained in $\Set{\ft{g}=0} \cup \Set{0}$ but it is contained in its interior, so the tempered distribution $\ft{\delta_\Lambda}$ is killed by $\ft{g}$, except at the origin, and $g$ tiles with $\Lambda$ at level
$$
\int g \cdot \dens\Lambda = \ft{g}(0) \cdot 1 = \one_{\alpha H} * \one_{\alpha H} (0) = \vol(\alpha H).
$$
But the value $g(0) = \Abs{\ft{\one_{\alpha H}}}^2(0) = \vol(\alpha H)^2$ can be made to be $ > \vol(\alpha H)$ (remember $\vol(H)>1$) for some $\alpha<1$. This is a contradiction as the value of the tile $g$ at some point cannot be higher than the level of the tiling.

\end{proof}

\subsection{The Tur\'an extremal problem about positive definite functions of given support}\label{ss:turan}

In the proof of Theorem \ref{thm:nonsym} the following was the key: we had a symmetric convex set, namely $K-K$ ($K$ itself was not symmetric) and the following two functions were supported in it:
$$
\alpha = \one_K*\one_{-K} \text{ and } \beta = \one_H*\one_H
$$
where $H = \frac12(K-K)$ (thus $H$ is symmetric). It was essential that both these functions were positive definite (i.e. had a nonnegative Fourier Transform) and what made the proof work was the fact that
$$
\int \beta > \int \alpha,
$$
by the Brunn-Minkowski inequality (or, rather, the equality case in that inequality).

This means that if we have a (symmetric) convex body $P$ and we manage to find a positive definite function $\gamma$ supported inside $P$ whose integral $\int \gamma$ is strictly larger than the integral of the positive definite function
$$
\one_{\frac12 P}*\one_{\frac12 P},
$$
which is also supported inside $P$, then we immediately have that $P$ cannot be spectral.

This maximization problem
\begin{quotation}
Given $\Omega \subseteq \RR^d$ (containing $0$ and symmetric with respect to $0$) maximize $\int \gamma$ where $\gamma$ is a continuous positive definite function supported in $\Omega$ with $\gamma(0)=1$.
\end{quotation}
is the so-called Tur\' an extremal problem about positive definite functions of given support \cite{stechkin1972extremal}. The conjecture for convex $\Omega$ is that the extremal function is indeed the autoconvolution of the half-body $\one_{\frac12\Omega}*\one_{\frac12\Omega}$.

Our discussion implies that the Tur\' an conjecture is true for all spectral convex bodies \cite{kolountzakis2003problem,arestov2001hexagon,arestov2002turan}. By the results described later in \S \ref{ss:using-w-tiling}, the class of convex spectral bodies is precisely the class of convex translational tiles.

The Tur\'an extremal problem is still wide-open \cite{kolountzakis2006turan}, even for the case of convex bodies. Besides convex tiles it is also known for the Euclidean ball (quite far from being a tile) \cite{gorbachev2001extremum,kolountzakis2003problem}, but it is not known for other symmetric convex sets. See also \cite{gorbachev2004turan,andreev1996extremum}.

One case is easy to see. If every positive definite function, among those competing for the maximal integral, can be written as a sum of convolution squares then we can get the right bound. A function $f$ is a sum of convolution squares if
$$
f = g_1*\widetilde{g_1}+g_2*\widetilde{g_2}+\cdots,
$$
where $\widetilde{g}(x) = \overline{g(-x)}$. If this happens then we have
$$
\ft{f} = \Abs{\ft{g_1}}^2+\Abs{\ft{g_2}}^2 + \cdots
$$
so $f$ is positive definite. If the $g_i$ are supported in the half body $\frac12 K$ then the $g_i*\widetilde{g_i}$ are supported in $K$.
Then
\begin{align*}
\int f &= \sum_j \int g_j*\widetilde{g_j}\\
  &= \sum_j \Abs{\int g_j}^2\\
  &\le \sum_j \int\Abs{g}^2 \cdot \frac{1}{2^d} \vol(K) \text{ {\hskip 3em} (Cauchy-Schwarz) }\\
  &= f(0) \frac{1}{2^d}\vol(K),
\end{align*}
which is precisely the conjectured inequality.
This does happen in dimension 1: every positive definite function on the interval $[-A, A]$ is the convolution square of a function supported on $[-A/2, A/2]$, so the Tur\'an Conjecture holds for an interval and the extremal function is the triangle function
$$
(1-\Abs{x}/A)^+
$$
(whose Fourier Transform is the nonnegative, and very important, Fej\'er kernel).

A similar decomposition holds also for the case of \textit{radial}, positive definite smooth functions supported on a $0$-centered ball in any dimension: any such function can be expanded as $\sum_{n=1}^\infty g_j * \widetilde{g_j}$, where the functions $g_j$ are supported in the half-ball \cite{rudin1970extension}. This was used in proving \cite{kolountzakis2003problem} that the Tur\'an conjecture holds for the ball.

\qq{Prove the Tur\' an Conjecture for the regular octagon, the simplest symmetric convex set in the plane that does not tile.}

Another interesting question that arose from the study of positive definite functions with restricted support is that of approximation by other positive definite functions of even smaller support \cite{kolountzakis2006turan}.

Suppose that $\Omega$ is a bounded open set with a nice boundary and that $f$ is a smooth, say, function supported in $\overline{\Omega}$ (the support of a function is a closed set, so this means that the function may be non-zero all the way up to the boundary of $\Omega$, where it has to vanish for continuity). It is not hard to see that we can approximate $f$, say, uniformly in $\Omega$, by another smooth function $g$ whose support is strictly inside $\Omega$. In other words $g$ is zero in a neighborhood of $\partial\Omega$ (chop off $f$ a little inside $\Omega$ and convolve the resulting function with an appropriately narrow smooth approximate identity to construct such a function $g$).

What if $f$ is assumed to be positive definite and $g$ is required to also be positive definite? The previous chopping off by which we constructed $g$ destroys the positive definiteness so this method does not work. If $\Omega$ is convex, or even strictly star-shaped (this means that for $0<t<1$ we have $\overline{t\Omega} \subseteq \Omega$) we can replace this brutal chopping off by taking the function $f(x/t)$ which is supported strictly inside $\Omega$, is still positive definite and is uniformly close to $f$ by the uniform continuity of $f$. But if the domain is not strictly star shaped then we do not know if this is possible.

\qq{\label{q:approx} If $0 \in \Omega \subseteq \RR^d$ is an open set, symmetric about $0$, and with a piecewise smooth boundary and $f$ is a positive definite function which is non-zero only in $\Omega$ and $\epsilon>0$ can we always find a positive definite function $g$ which is non-zero only in the set $\Set{x\in\Omega:\ \mathrm{dist}\,(x, \partial \Omega)>\delta}$ for some positive $\delta$ and is such that $\Abs{f(x)-g(x)}<\epsilon$ for $x \in \RR^d$.
}

Few partial results exist for this problem. In \cite{mavroudis2013approximation} it is shown that the answer to Question \ref{q:approx} is indeed affirmative in dimension 1 when $\Omega$ is a finite union of intervals symmetric about 0. This is really due to the fact that in dimension 1 the boundary of such a set, a finite collection of points, is small enough compared to the set that the chopping off followed by smoothing approach can indeed work. This method cannot be used in dimension 2 and higher. In the case when $\Omega\subseteq \RR^d$ has radial symmetry and is a union of a ball centered at 0 and finitely many annuli also centered at 0, the function $f$ is also radial then the answer is also shown to be affirmative in \cite{mavroudis2013approximation}.

\subsection{Unbalanced polytopes are not spectral}\label{ss:unbalanced}

There is a different proof of the necessity of symmetry for spectral convex domains which are \textit{polytopes} \cite{kolountzakis2002class}.

It is intuitively clear that if we have a polytope $P$ (not necessarily convex) which tiles by translation, then, if we look at its co-dimension 1 faces which are normal to a given direction $u$ then the total area of such faces whose exterior normal vector points in the positive $u$ direction (call them $u^+$ faces) must be the same as the total area of those faces whose exterior normal vector points in the negative $u$ direction (the $u^-$ faces). The reason is that in any tiling by translations of $P$ the $u^+$ faces can only be countered from the outside by $u^-$ faces, so their total areas must match. See Fig.\ \ref{fig:faces}.

\begin{figure}[h]
\input 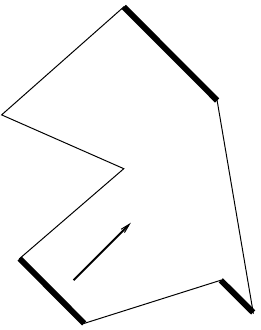_t
\caption{In a tiling polytope the total area of the co-dimension 1 faces that point in the positive $u$ direction ($u^+$ faces) must be the same as that of those faces pointing in the negative $u$ direction ($u^-$ faces). The same is true for spectral polytopes \cite{kolountzakis2002class}.}\label{fig:faces}
\end{figure}

It is proved in \cite{kolountzakis2002class} that if a polytope is unbalanced (different total areas for the $u^+$ and $u^-$ faces, for some direction $u$) then it cannot be spectral. To see why this implies that non-symmetric convex polytopes cannot be spectral we need to invoke a classical theorem of Minkowski, which implies that a non-symmetric polytope has a non-symmetric surface area measure. This is the measure on the sphere $S^{d-1}$ which is defined by pushing the surface measure from the boundary of the polytope onto the sphere via the Gauss map, which maps (almost) every point on the polytope to its exterior unit normal vector \cite{schneider2013convex}. To have a non-symmetric surface area measure, for a polytope, means precisely that it is unbalanced, and Minkowski's theorem says that a non-symmetric polytope is unbalanced, thus non-spectral.

Let us indicate how we prove (for the details see \cite{kolountzakis2002class}) that unbalanced polytopes are not spectral. Say that $u \in \RR^d$ is a bad direction for the unbalanced polytope $E$, that is the surface area of $E$ pointing in the positive $u$ direction is different from that pointing in the $-u$ direction. Differentiate the indicator function $\one_E$ of the polytope along the direction $u$, in the sense of distributions. The interior of $E$ vanishes of course as the function is constant there. All that remains is a measure supported on the boundary $\partial E$. This measure is constant on each co-dimension 1 face since all that matters is the angle of $u$ and that face. It is 0 only on the faces parallel to $u$. On the faces normal to $u$ it is equal to $1$ on those facing opposite $u$ and equal to $-1$ on those facing in the direction of $u$. Call $\nu$ this measure on $\partial E$. Since $\nu = \partial_u \one_E$ we have
\begin{equation}\label{ft-nu}
\ft{\nu}(\xi) = (2\pi i) (\xi \cdot u) \ft{\one_E}(\xi),\ \ \ (\xi \in \RR^d). 
\end{equation}

\begin{figure}[h]
\input 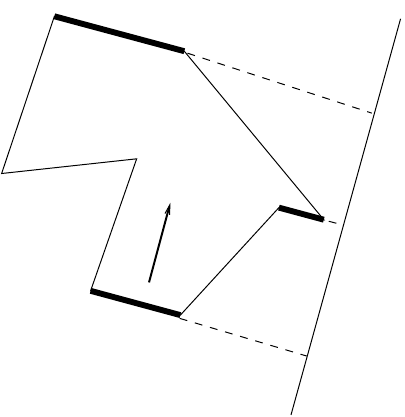_t
\caption{We project the surface measure of our body onto a one-dimensional subspace $\RR u$. We obtain a measure on the line with a continuous part and an atomic part supported on finitely many points $\lambda_j$.}\label{fig:projection}
\end{figure}

Project the measure $\nu$ onto the one-dimensional subspace $\RR u$ to obtain a measure $\mu$ on the line (see Fig.\ \ref{fig:projection}). The parts of $\nu$ normal to $u$ will give rise to an atomic part $\mu_a$ of $\mu$ of the form $\sum_{k=1}^K c_k \delta_{\lambda_k}$ for some $\lambda_k \in \RR$ and some coefficients $c_k \in \RR$ which are such that $\sum_{k=1}^K c_k \neq 0$ (this is a consequence of the polytope being unbalanced in the direction $u$). The part of $\nu$ that is not normal to $u$ contributes a continuous part to $\mu$, call it $\mu_c$, so that $\mu = \mu_a + \mu_c$.

It is a general and well known fact that if we project a function or measure defined on $\RR^d$ orthogonally onto a subspace $V$ of $\RR^d$ and then take the Fourier Transform of the projection on $V$ we will read precisely the restriction on $V$ of the Fourier Transform of the original function or measure on $\RR^d$. This is a simple consequence of Fubini's theorem. Therefore, using \eqref{ft-nu}, for all $t \in \RR$ we have
$$
\ft{\mu}(t) = \ft{\nu}(t\, u) = (2\pi i) \, t \, \ft{\one_E}(t\, u).
$$
By the Riemann Lebesgue theorem applied to $\mu_c$ we have that, as $t \to \infty$,
$$
\ft{\mu}(t) = \ft{\mu_a}(t) + o(1) = \sum_{k=1}^K c_k e^{2\pi i \lambda_k t}
$$
Let us now pretend that $\ft{\mu_a}(t)$ is periodic in $t$ with period, say, $T$. Of course this is not in general true, but it is \textit{almost periodic} \cite{besicovitch1954almost} and this turns out to be sufficient for the argument we are about to describe. Since $\ft{\mu_a}(0) = \sum_{k=1}^K c_k \neq 0$ it follows that $\ft{\mu_a}$ is absolutely larger than a positive constant in some interval $(-\delta, \delta)$ and, by periodicity, at all intervals $(-\delta, \delta)+T\ZZ$.

Since $\ft{\mu_c}$ tends to 0 it follows that $\ft{\mu}$ is also non-zero on that arithmetic progression of intervals, at least after some point $nT$. On the line $\RR u$ the functions $\ft{\mu}$ and $\ft{\one_E}$ have the same zeros (except at 0), so $\ft{\one_E}$ also does not vanish on $(-\delta, \delta)\pm T \ZZ^{\ge n} u$. By the uniform continuity of $\ft{\one_E}$ on $\RR^d$ it follows that, possibly for a smaller positive $\delta$, $\ft{\one_E}$ does not vanish on the union of balls (see Fig.\ \ref{fig:ap-balls})
\begin{equation}\label{non-zeros}
U = B_\delta(0) \pm T \ZZ^{\ge n} u.
\end{equation}
Let us see now how this contradicts the fact that a spectrum $\Lambda$ of $E$ must have positive density. Assume for simplicity that $u = e_d$ is the $d$-th coordinate unit vector and view $\RR^d$ as covered by a finite union of translates of the lattice arrangement of balls
$$
(T\ZZ)^d + B_\delta.
$$
At least one of them must contain a part of $\Lambda$ that is of positive density. However the presence of one point of $\Lambda$ in one such ball implies that all balls above it (in the positive $d$-th axis direction) above height $nT$ are empty of $\Lambda$-points: any $\Lambda$-point in there would contradict the fact that there are no zeros of $\ft{\one_E}$ in $U$ of \eqref{non-zeros}. So a half-space of balls is empty, contradicting that $\Lambda$ has positive density in that lattice arrangement of balls.

\begin{figure}[h]
\ifdefined\SMART\resizebox{10cm}{!}{\input 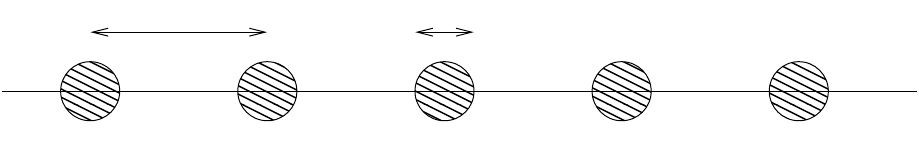_t}\else
\input ap-balls.pdf_t
\fi
\caption{The function $\ft{\one_E}$ is non-zero on this arithmetic progression of $\delta$-balls spaced by $T$, after some initial point $nT u$.}\label{fig:ap-balls}
\end{figure}

The fact that every spectral polytope must have balanced area measure, as we showed, has been vastly extended in \cite{lev2021spectrality}, to include all facets of all co-dimensions, not just of co-dimension 1. More precisely, they show that all \textit{Hadwiger functionals} of the polytope are 0. As a consequence, it is proved in \cite{lev2021spectrality} that spectral polytopes are equidecomposable by translations to a cube, i.e., we can cut them up into finitely many polytopes which can then be rearranged by translations to form a cube.

\subsection{Convex tiles are spectral}\label{ss:convex-tiles-are-spectral}

When we restrict our attention to convex sets, regarding the validity of the Fuglede Conjecture, we, at last, have some good news: the convex tiles are all spectral, so that one direction at least of the Fuglede Conjecture holds. The reason feels like a let-down though: there are no complicated convex tiles, we know them all, to some extent, and they are all lattice tiles as well. This, by Theorem \ref{thm:lattice-fuglede}, implies that they are also spectral, having as one of their spectra the dual lattice of the lattice they tile with.

More precisely, it has long been known \cite{venkov1954class,mcmullen1980convex} that a convex body $K$ tiles by translations if and only if all the following conditions hold:
\begin{itemize}
\item 
$K$ is a polytope
\item 
$K$ is symmetric
\item 
$K$ has symmetric co-dimension 1 faces
\item 
Every \textit{belt} of $K$ consists of four or six faces. For the precise definition of a belt see, for instance, \cite{lev2022fuglede}.
\end{itemize}
For instance, in dimension 2, where the codimension 2 faces are just vertices of the polytope these conditions say that a convex polygon tiles by translations if and only if it is a parallelogram or a symmetric hexagon. See Fig.\ \ref{fig:planar-convex}.

It is also known that whenever a convex body can tile by translations then it can also tile by lattice translations.

\begin{figure}[h]
\input 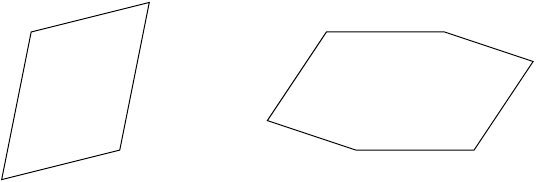_t
\caption{The only planar convex sets that tile by translation or are spectral are the parallelograms and the symmetric hexagons.}\label{fig:planar-convex}
\end{figure}

Early on the Fuglede Conjecture (both directions) was settled for planar convex bodies \cite{iosevich2003fuglede}:
\begin{theorem}
A planar convex set is a translational tile or a spectral set precisely when it is a parallelogram or a symmetric hexagon (see Fig.\ \ref{fig:planar-convex}).
\end{theorem}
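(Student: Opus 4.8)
The plan is to reduce the entire statement to a single implication. The tiling half of the theorem is classical: by the Venkov--McMullen conditions recalled in \S\ref{ss:convex-tiles-are-spectral}, a planar convex translational tile must be a symmetric polygon whose vertices carry belts of four or six edges, which in $\RR^2$ is exactly a parallelogram or a centrally symmetric hexagon; conversely both of these visibly tile. Furthermore, every convex translational tile is in fact a \emph{lattice} tile (also stated in \S\ref{ss:convex-tiles-are-spectral}), so Theorem \ref{thm:lattice-fuglede} hands us the dual lattice as a spectrum. Hence \emph{every convex tile is spectral}, and in particular parallelograms and symmetric hexagons are both tiles and spectral. All that remains is the reverse direction on the spectral side:
\begin{quotation}
if a planar convex body $E$ is spectral, then $E$ is a parallelogram or a symmetric hexagon.
\end{quotation}
Since a parallelogram or symmetric hexagon is precisely a convex tile, proving this is equivalent to showing that a planar convex spectral set tiles.

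First I would dispose of symmetry: by Theorem \ref{thm:nonsym} a bounded open convex nonsymmetric set is not spectral, so our spectral $E$ may be taken symmetric, $E=-E$. The rest is structural, and it is convenient to pass to the Fourier side via \eqref{spectrum-as-tiling}: a spectrum $\Lambda$ is a well-distributed set of density $\vol(E)$ with $\Lambda-\Lambda\subseteq Z(\ft{\one_E})\cup\Set{0}$, where $Z(\ft{\one_E})=\Set{\ft{\one_E}=0}$. The goal becomes a geometric incompatibility: the zero set of $\ft{\one_E}$ must be rich enough to contain the difference set of a set of density $\vol(E)$, and I would argue this forces $\partial E$ to be flat with few edge directions.

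The crux is ruling out curvature. Where $\partial E$ is smooth with nonvanishing curvature, stationary phase gives the decay $\ft{\one_E}(R\omega)\sim c(\omega)\,R^{-3/2}$ as $R\to\infty$ along a boundary normal $\omega$, and (as for the disk, where $\ft{\one_E}$ is a Bessel function) the zeros organise into families of \emph{curved} level curves rather than straight lines. I would show that such a zero set is simultaneously too sparse and too curved to contain $\Lambda-\Lambda$ for a set $\Lambda$ of full density $\vol(E)$ with a positive minimal separation, whence no arc of $\partial E$ can be curved and $E$ is a polygon. Once $E$ is a symmetric polygon, each pair of parallel edges gives an \emph{edge measure} $\mu$ as in \S\ref{ss:structure-plane-space}, whose Fourier zero set is a union of two families of parallel lines; orthogonality confines $\supp\ft{\delta_\Lambda}$ to the intersection of these families over all edge directions, and a density count against $\vol(E)$ caps the number of distinct edge directions at three, i.e.\ at most six edges.

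I expect the passage from ``spectral and convex'' to ``polygon'' to be the main obstacle. The symmetry step is immediate from Theorem \ref{thm:nonsym}, and the final edge count rests on the clean parallel-line geometry of edge measures; but eliminating curvature requires marrying the $R^{-3/2}$ decay and curved zero sets of $\ft{\one_E}$ to the rigidity (density $\vol(E)$ and positive minimal gap) of the spectrum, and it is this quantitative Fourier-analytic balancing act that I expect to carry the weight of the theorem.
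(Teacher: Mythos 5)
Your ``easy'' direction is sound and matches the paper's framing exactly: the Venkov--McMullen classification from \S\ref{ss:convex-tiles-are-spectral}, the fact that convex translational tiles are lattice tiles, and Theorem \ref{thm:lattice-fuglede} together give that planar convex tiles are precisely parallelograms and symmetric hexagons and that these are spectral. The problem is the hard direction, spectral $\Longrightarrow$ parallelogram or symmetric hexagon. Beyond the symmetry reduction via Theorem \ref{thm:nonsym} (which is indeed the standard first step), your two core steps are announced rather than proved: ``I would show that such a zero set is simultaneously too sparse and too curved to contain $\Lambda-\Lambda$'' and ``a density count against $\vol(E)$ caps the number of distinct edge directions at three.'' These two steps \emph{are} the theorem; they constitute essentially the entire content of \cite{iosevich2003fuglede}, which is what the (survey) paper cites for this statement rather than proving it. In particular, ruling out symmetric polygons with eight or more edges is genuinely delicate -- it does not follow from intersecting the parallel-line zero sets of edge measures and counting density, and no short argument of that type is known.

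There is also a concrete mathematical flaw in the curvature step: the implicit dichotomy ``either $\partial E$ contains a smooth arc of nonvanishing curvature, or $E$ is a polygon'' is false for convex bodies. The boundary of a convex body can consist of countably many line segments accumulating on a measure-zero set of extreme points (a singular curvature measure), in which case $E$ is not a polygon yet there is no arc on which the stationary-phase asymptotics $\ft{\one_E}(R\omega)\sim c(\omega) R^{-3/2}$ are valid, so your mechanism for excluding such bodies simply does not engage. Any correct proof must handle this intermediate regime: \cite{iosevich2003fuglede} does so with a dedicated geometric-combinatorial argument about the spectrum, and the modern route in the paper does so via weak tiling -- by Theorem \ref{thm:rn-w-tiling} a spectral set weakly tiles, and the argument of \S\ref{ss:using-w-tiling} (the concentric-ring computation, suitably generalized in \cite{lev2022fuglede} to give Theorem \ref{thm:convex-fuglede}) shows a weakly tiling convex body must be a lattice-tiling polytope, with no stationary phase anywhere. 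If you want to complete your proposal along the lines you chose, the missing work is exactly at these two points; alternatively, replacing your curvature and edge-count steps by the weak-tiling criterion would align your argument with the approach the paper itself endorses.
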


We will see in \S \ref{ss:using-w-tiling} that the other direction of the Fuglede Conjecture is also true for convex bodies. This is one of the greatest developments in this area in the last decades.

\subsection{The spectra of the cube}\label{ss:spectra-of-cube}

In \cite{lagarias2000orthonormal,iosevich1998spectral,kolountzakis2000packing} the following rather idiosyncratic result was proved for the cube in $\RR^d$.
\begin{theorem}\label{thm:spectra-of-cube}
Let $Q = \left(-\frac12, \frac12\right)^d$ be the unit cube in $\RR^d$ and $T \subseteq \RR^d$. Then
$$
T \text{ is a spectrum of } Q \iff Q \text{ tiles when translated by } T.
$$
\end{theorem}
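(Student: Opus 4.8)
The plan is to reduce both spectrality and tiling to tiling statements for nonnegative functions, and then to compare them on the Fourier side, where the product structure of $\ft{\one_Q}$ does the work. Normalize $\vol(Q)=1$. By \eqref{spectrum-as-tiling}, completeness of an orthogonal $T$ is equivalent to the power-spectrum tiling $\Abs{\ft{\one_Q}}^2*\delta_T=1$. First I would observe that for the cube orthogonality need not be assumed separately: if $\Abs{\ft{\one_Q}}^2*\delta_T=1$ holds (and it holds everywhere, since $\Abs{\ft{\one_Q}}^2$ is continuous and $T$ has bounded density, so the series converges locally uniformly), then evaluating at a point $t=\lambda_0\in T$ makes the $\lambda_0$-term equal $\Abs{\ft{\one_Q}(0)}^2=1$, forcing every other term to vanish, i.e. $\ft{\one_Q}(\lambda_0-\lambda)=0$ for $\lambda\neq\lambda_0$, which is exactly the orthogonality \eqref{diff-in-zeros}. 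Thus the theorem becomes the analytic equivalence
$$
\one_Q*\delta_T = 1 \iff \Abs{\ft{\one_Q}}^2 * \delta_T = 1 .
$$

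Next I would pass to the Fourier side of both tilings as in \eqref{ft-product}. Writing $\nu=\ft{\delta_T}$, the left statement is $\ft{\one_Q}\cdot\nu=\delta_0$ and the right one is $(\one_Q*\one_Q)\cdot\nu=\delta_0$, since the Fourier transform of $\Abs{\ft{\one_Q}}^2$ is the autocorrelation $\one_Q*\one_{-Q}=\one_Q*\one_Q$. From the computation in \S\ref{ss:bricks} (here $Q$ is the centred unit cube, so $\one_Q*\one_Q=\prod_j(1-\Abs{\xi_j})^+$), the two relevant zero sets are
$$
Z(\ft{\one_Q})=\Set{\xi:\ \xi_j\in\ZZ\setminus\Set{0}\ \text{for some}\ j}, \qquad Z(\one_Q*\one_Q)=\Set{\xi:\ \Abs{\xi_j}\ge1\ \text{for some}\ j}.
$$
The key elementary observation is the inclusion $Z(\ft{\one_Q})\subseteq Z(\one_Q*\one_Q)$, together with the fact that $\one_Q*\one_Q$ actually \emph{vanishes} on all of $Z(\ft{\one_Q})$ (if some $\xi_j$ is a nonzero integer then $(1-\Abs{\xi_j})^+=0$).

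This inclusion makes tiling $\Rightarrow$ spectrum the easy half. If $\one_Q*\delta_T=1$, then by \eqref{supports} $\nu$ lives, off the origin, on $Z(\ft{\one_Q})$; since $\one_Q*\one_Q$ vanishes there and $\nu$ is locally a measure for a cube tiling (the translation set is fibered and $\nu$ is a point measure carried by the integer hyperplanes $\Set{\xi_j\in\ZZ}$), the product $(\one_Q*\one_Q)\cdot\nu$ is concentrated at the origin with the correct mass $(\one_Q*\one_Q)(0)\cdot\dens T=1$, giving $\Abs{\ft{\one_Q}}^2*\delta_T=1$, i.e. $T$ is a spectrum.

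The hard half is spectrum $\Rightarrow$ tiling, and this is where I expect the main obstacle to lie. Now \eqref{supports} only tells us that $\nu$ is supported, off $0$, in the \emph{larger} set $Z(\one_Q*\one_Q)$, and on that set $\ft{\one_Q}$ does not vanish (e.g. at $\xi=(3/2,0,\dots,0)$), so one cannot immediately conclude $\ft{\one_Q}\cdot\nu=\delta_0$. What must be shown is that the spectrum hypothesis forces $\nu$ onto the much thinner set $Z(\ft{\one_Q})$ of integer hyperplanes. Equivalently, one must upgrade the density-$1$ \emph{packing} that orthogonality already yields (each difference $\lambda-\mu$ has a nonzero integer coordinate, hence $\lambda-\mu\notin Q-Q=(-1,1)^d$, so the cubes $Q+\lambda$ are pairwise disjoint) into the full \emph{covering} property. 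This is precisely the content of the structure theory of cube tilings and spectra: by fibering $\RR^d=\RR^{d-1}\times\RR$ and inducting on the dimension one shows that any such $T$ has the recursive "shifted-column" form, from which both the covering of $Q+T$ and the concentration of $\nu$ on integer hyperplanes follow. I expect this fibering/induction step — proving that a spectrum of the cube is necessarily fibered — to be the crux; it is the part that genuinely uses that the tile is a cube, rather than merely the soft support considerations above, and it is where the arguments of \cite{lagarias2000orthonormal,iosevich1998spectral,kolountzakis2000packing} concentrate their effort.
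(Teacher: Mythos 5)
Your setup is sound and matches the paper's up to the easy half: normalizing $\vol(Q)=1$, spectrality is the tiling $\Abs{\ft{\one_Q}}^2*\delta_T=1$, the zero-set inclusion \eqref{z-contained} gives ``tiling $\Longrightarrow$ spectral'' (modulo the same distributional technicalities the paper also suppresses with its ``pretend'' device), and orthogonality gives that the translates $Q+t$, $t\in T$, form a packing. The genuine gap is in the hard direction, exactly where you locate it, and the route you propose for closing it cannot work. You want to prove that every spectrum of the cube has a recursive ``shifted-column'' (fibered) structure by induction on the dimension. But the easy direction, which you have already established, shows that every tiling complement of $Q$ is a spectrum of $Q$; and in dimension $8$ and higher there exist cube tilings in which \emph{no two cubes share a complete co-dimension $1$ face} (the failure of Keller's conjecture, see \S\ref{sss:minkowski}). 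Any set with the column structure you describe contains full lines of cubes, and consecutive cubes in such a line do share a complete facet. Hence those Keller-type tilings are spectra of $Q$ that are not fibered in any sense, and no induction proving that spectra are fibered can succeed. (Your parenthetical justification in the easy direction, that ``the translation set is fibered and $\nu$ is a point measure carried by the integer hyperplanes,'' leans on the same false structural claim, though there the conclusion can be reached without it.)

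The idea you are missing is the paper's key lemma, Theorem \ref{thm:tiling-from-packing}, which makes the hard direction soft and requires no structure theory of cube tilings whatsoever: if $f,g\ge 0$ satisfy $\int f=\int g=1$ and both $f+T$ and $g+T$ are packings, then $f+T$ is a tiling if and only if $g+T$ is a tiling. Apply it with $f=\one_Q$ and $g=\Abs{\ft{\one_Q}}^2$: orthogonality gives that $\one_Q+T$ packs (your observation that the differences $\lambda-\mu$ avoid $Q-Q=(-1,1)^d$), spectrality gives that $g+T$ tiles, and the lemma transfers the tiling from $g$ to $\one_Q$. The proof of the lemma is a short integral-swapping argument: if $f+T$ tiles $-\supp g$, then $1=\int g(-x)\sum_{t\in T}f(x-t)\,dx=\int f(-y)\sum_{t\in T}g(y-t)\,dy$ forces $\sum_{t\in T}g(y-t)=1$ a.e.\ on $-\supp f$ (since the packing bound caps the inner sum at $1$), and applying this to all translates $g(\cdot-a)$ upgrades the conclusion to a tiling of all of $\RR^d$. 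This transfer principle --- that, once packing holds, tiling is a property of the translation set rather than of the tile --- is precisely what replaces the fibering argument, and it is where the cube-specific information (the zero-set inclusion plus packing) is cashed in.
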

It should be said here that the sets $L$ the cube tiles with (the ``tiling complements'' of the cube) can be quite exotic. It used to be a conjecture of Keller \cite{keller-conjecture} from the 1930s that in any translational tiling by a cube one can find two cubes which share a whole co-dimension 1 face.

\subsubsection{The Minkowski Conjecture on lattice tilings}\label{sss:minkowski}
This face-to-face property is in fact true if we restrict ourselves to lattice tilings of the cube, and this was another conjecture, of Minkowski \cite{minkowski1910geometrie}, that was eventually proved by Haj\'os \cite{hajos1942einfache}, a proof that was celebrated as it translated the problem to group theoretic language and proceeded to use group rings (the Fourier Transform, in our language), a first in tiling \cite{stein1994algebra}. It is interesting to see here two equivalent forms of Minkowski's Conjecture (Haj\' os' theorem) \cite{kolountzakis1998notched} stated in the language of linear forms.

\begin{theorem}\label{thm:minkowski-hajos}
If $A \in \RR^{d\times d}$ has $\det A = 1$ then there is $x \in \ZZ^d$
such that
$$
\Linf{A x} < 1,
$$
unless $A$ has an integral row.
\end{theorem}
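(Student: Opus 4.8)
The plan is to prove the contrapositive. Assume $\det A = 1$ and that there is \emph{no} nonzero $x\in\ZZ^d$ with $\Linf{Ax}<1$ (the vector $x=0$ is of course excluded, as it trivially meets the inequality); I will deduce that some row of $A$ is integral. First I would translate the hypothesis into the geometry of the lattice $L=A\ZZ^d$, which has $\dens(L)=1$ because $\det A=1$. Saying that no nonzero integer $x$ has $Ax\in(-1,1)^d$ is the same as saying that $L$ meets the open cube $(-1,1)^d$ only at the origin; and since $(-\tfrac12,\tfrac12)^d-(-\tfrac12,\tfrac12)^d=(-1,1)^d$, this is exactly the statement that the unit cubes $Q+\ell$, with $Q=(-\tfrac12,\tfrac12)^d$ and $\ell\in L$, have pairwise disjoint interiors, i.e.\ that they \emph{pack} $\RR^d$. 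As $\vol(Q)\cdot\dens(L)=1$, this packing has density one and is therefore a genuine tiling (the volume/density comparison already used in \S\ref{ss:lattice-fuglede}; Minkowski's convex-body theorem applied to the closed cube of volume $2^d=2^d\det L$ confirms that in this extremal configuration the nonzero lattice points sit precisely on the boundary). So everything reduces to the geometric claim: \emph{a lattice tiling of $\RR^d$ by unit cubes forces an integral row of $A$.}

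Next I would record what the target means and pass to a finitary model. Row $j$ of $A$ is integral if and only if $(Ax)_j\in\ZZ$ for every $x\in\ZZ^d$, i.e.\ if and only if every cube centre in $L$ has integer $j$-th coordinate; geometrically this is the integer-layered, face-to-face structure of the Minkowski conjecture in one coordinate direction (\S\ref{sss:minkowski}). A standard — and genuinely delicate, since we sit at an extremal configuration — limiting argument lets me assume $A$ has rational entries; clearing a common denominator $N$ gives $L\subseteq\tfrac1N\ZZ^d$ with $[\tfrac1N\ZZ^d:L]=N^d$, so the tiling descends to a tiling of the finite abelian group $G=\tfrac1N\ZZ^d/L$. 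There the discrete cube $\Set{0,\tfrac1N,\ldots,\tfrac{N-1}{N}}^d$ maps bijectively onto $G$ and splits as a product of $d$ \emph{cyclic simplices} $S_i=\Set{0,g_i,2g_i,\ldots,(N-1)g_i}$, with $g_i=\tfrac1N e_i \bmod L$, so that $G=S_1\oplus\cdots\oplus S_d$ is a factorization of $G$ into cyclic simplices.

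The crux, and the main obstacle, is Haj\'os's theorem: in any factorization of a finite abelian group into cyclic simplices at least one simplex is a subgroup. This is the hard combinatorial-algebraic heart of the Minkowski conjecture, and it is exactly the point at which Haj\'os passed to group rings (the Fourier transform on $G$, in our language); I would invoke it as a black box. A subgroup factor $S_i=\langle g_i\rangle$ means $Ng_i=0$ in $G$, i.e.\ $N\cdot\tfrac1N e_i=e_i\in L$: two neighbouring cubes share a full facet in the $i$-th direction. To convert this facet-sharing into the stated integral-row conclusion I would then induct on $d$. Projecting $\RR^d$ along the lattice vector $e_i$ (which, since $L\cap(-1,1)^d=\Set{0}$, is the primitive period of $L$ in that line) collapses $L$ to a cube-tiling lattice of covolume one in $\RR^{d-1}$; by the inductive hypothesis it has integer $k$-th coordinates for some $k\neq i$, and since this projection leaves every coordinate other than $i$ untouched, the same holds for $L$ itself, giving an integral $k$-th row of $A$ (the base case $d=1$, forcing $A=[1]$, being trivial). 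It then remains only to justify the reduction to rational $A$: a real $A$ with no integral row has a rational approximant still with no integral row, for which the argument produces a nonzero integer point in the open cube; the truly delicate point, which I would have to treat with care, is that passing to the limit may push $Ax$ onto the boundary $[-1,1]^d$, so controlling the boundedly many candidate vectors $x$ and ruling out such degeneration is where the real work of this last step lies.
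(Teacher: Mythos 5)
First, a point of orientation: the paper does not actually prove Theorem \ref{thm:minkowski-hajos} --- it states it as the celebrated theorem of Haj\'os resolving Minkowski's conjecture, citing \cite{hajos1942einfache} for the proof and \cite{kolountzakis1998notched} for the equivalent linear-forms formulations. So there is no proof in the paper to compare against, and your proposal must stand on its own. Its skeleton is the classical Haj\'os route, and most of it is correct: the contrapositive identifies the hypothesis with the statement that $L=A\ZZ^d$ gives a lattice packing, hence (by the density argument) a tiling, of $\RR^d$ by unit cubes; in the rational case the tiling descends to the finite group $G=\frac{1}{N}\ZZ^d/L$ and becomes a factorization $G=S_1\oplus\cdots\oplus S_d$ into cyclic simplices; Haj\'os's factorization theorem yields a subgroup factor, i.e.\ $e_i\in L$; and your induction on $d$ (projecting along $e_i$, which is primitive in $L$) correctly converts facet-sharing into the integral-row conclusion --- this last step is genuine content and you handle it properly. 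Be aware, though, that what you have produced is a \emph{reduction} of the stated theorem to Haj\'os's factorization theorem, which is the entire hard core of the result and is exactly what Haj\'os invented the group-ring method for; quoting it as a black box is defensible here only because the paper itself treats the whole theorem as a citation.

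The genuine gap is the real-to-rational reduction, and it is not merely ``delicate'' as you concede --- as you have set it up, it fails. Your plan is to approximate a real $A$ having no integral row by rational $A_n$ with $\det A_n=1$ and still no integral row (both achievable: $SL_d(\QQ)$ is dense in $SL_d(\RR)$ and ``no integral row'' is an open condition), apply the rational case to get nonzero $x_n\in\ZZ^d$ with $\Linf{A_n x_n}<1$, and pass to the limit. The $x_n$ are indeed bounded, since $x_n=A_n^{-1}y_n$ with $\Linf{y_n}<1$ and $A_n^{-1}\to A^{-1}$, so you may take $x_n=x$ constant along a subsequence; but the limit then gives only $\Linf{Ax}\le 1$. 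That conclusion is precisely Minkowski's convex body theorem, which holds for \emph{every} covolume-one lattice, integral row or not, and so carries no information toward the theorem. Ruling out the degenerate case in which every candidate lands on the boundary of the cube is not a compactness refinement one can bolt on at the end: a lattice all of whose nonzero points avoid the open cube is exactly a cube-tiling lattice, so excluding that case for $A$ \emph{is} the theorem for $A$ --- the argument is circular at this point. The classical treatments (Haj\'os, and the account in \cite{stein1994algebra}) handle irrational lattices by a structurally different argument: one perturbs the lattice itself, respecting the finitely many lattice vectors that lie on the boundary of the closed cube $[-1,1]^d$ so that the packing property is preserved \emph{exactly} while the generator matrix becomes rational, rather than approximating the matrix and taking limits of the integer points. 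Without that step, or some substitute for it, your argument establishes the theorem only for rational $A$.
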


\begin{theorem}\label{thm:minkowski-kolountzakis}
Let $B \in \RR^{d\times d}$ have $\det B = 1$ and the property that
for all $x\in\ZZ^d\setminus\Set{0}$ some coordinate
of the vector $Bx$ is a non-zero integer.
Then $B$ has an integral row.
\end{theorem}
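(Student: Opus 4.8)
The plan is to read Theorem~\ref{thm:minkowski-kolountzakis} as a strengthening of the preceding Theorem~\ref{thm:minkowski-hajos} (Haj\'os' theorem) and to deduce it directly from that statement; this deduction is exactly the content behind calling the two ``equivalent forms''. The one nontrivial ingredient we borrow is Theorem~\ref{thm:minkowski-hajos} itself, which we use in the rearranged form: if $\det A = 1$ and no $x \in \ZZ^d\setminus\Set{0}$ satisfies $\Linf{Ax} < 1$, then $A$ has an integral row.

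The key observation is that the integrality hypothesis on $B$ forces the size hypothesis that Theorem~\ref{thm:minkowski-hajos} requires. Indeed, fix $x \in \ZZ^d\setminus\Set{0}$. By assumption some coordinate $(Bx)_i$ is a \emph{nonzero} integer, and a nonzero integer has absolute value at least $1$, so $\Abs{(Bx)_i}\ge 1$ and therefore $\Linf{Bx}\ge\Abs{(Bx)_i}\ge 1$. Thus every nonzero integer vector $x$ satisfies $\Linf{Bx}\ge 1$. Since moreover $\det B = 1$, the rearranged Theorem~\ref{thm:minkowski-hajos} applies verbatim with $A = B$ and produces an integral row of $B$, which is the desired conclusion. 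This is the whole proof, and I do not expect any obstacle in it: the forward direction costs nothing beyond the remark that a nonzero integer is at least $1$ in modulus.

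Where the actual work lies---and what genuinely justifies the word ``equivalent'' in the cited source \cite{kolountzakis1998notched}---is the converse implication, that Theorem~\ref{thm:minkowski-kolountzakis} in turn yields Theorem~\ref{thm:minkowski-hajos}. Here one cannot merely compare hypotheses, since the bound $\Linf{Ax}\ge 1$ supplies a large coordinate but no integrality at all. The route I would take is through the shared geometric content, lattice tilings of $\RR^d$ by the unit cube: a nonzero integer coordinate of $Bx$ says that the lattice point $Bx \in B\ZZ^d$ lies on one of the integer hyperplanes $\Set{\xi:\ \xi_i\in\ZZ}$, whereas an integral $i$-th row of $B$ says that the entire lattice $B\ZZ^d$ is confined to that single family of hyperplanes. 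I expect the main obstacle to be precisely this passage from a \emph{pointwise}, per-$x$ integrality condition to a \emph{uniform} conclusion fixing one integral row for all $x$ at once---the rigidity phenomenon that Haj\'os' theorem encodes---so that the converse, unlike the forward direction above, really does require the full strength of the cube-tiling argument rather than a one-line comparison.
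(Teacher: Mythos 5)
Your proof is correct, and in fact the paper offers no proof of Theorem \ref{thm:minkowski-kolountzakis} to compare against: both it and Theorem \ref{thm:minkowski-hajos} are stated without proof, as equivalent forms of Minkowski's conjecture (Haj\'os' theorem), with the equivalence credited to \cite{kolountzakis1998notched}. Your deduction --- a nonzero integer has modulus at least $1$, so the hypothesis of Theorem \ref{thm:minkowski-kolountzakis} rules out every $x \in \ZZ^d\setminus\Set{0}$ with $\Linf{Bx} < 1$, whence Theorem \ref{thm:minkowski-hajos} applied with $A = B$ yields an integral row --- is sound and is exactly the cheap half of that equivalence. One caveat you handled correctly but silently: as literally printed, Theorem \ref{thm:minkowski-hajos} quantifies over all $x \in \ZZ^d$, which would make it vacuous since $x=0$ always satisfies $\Linf{Ax} = 0 < 1$; the intended statement, which your rearranged form builds in, requires $x \neq 0$.

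Since you comment on the converse (not needed for your proof), one correction there: the difficulty in deducing Theorem \ref{thm:minkowski-hajos} from Theorem \ref{thm:minkowski-kolountzakis} is not a pointwise-to-uniform upgrade on the same matrix, but a duality. If $\det A = 1$ and no nonzero $x$ has $\Linf{Ax} < 1$, then the unit cube $Q$ tiles with the lattice $A\ZZ^d$; by the lattice criterion \eqref{lattice-supports} and the description of $Z(\one_Q)$ in \S\ref{ss:spectra-of-cube}, this says that every nonzero point of the \emph{dual} lattice $A^{-\top}\ZZ^d$ has a nonzero integer coordinate, i.e.\ the hypothesis of Theorem \ref{thm:minkowski-kolountzakis} holds for $B = A^{-\top}$, not for $A$. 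Its conclusion then gives an integral row of $A^{-\top}$, equivalently $e_i \in A\ZZ^d$ for some $i$ (a face-to-face pair of cubes in the tiling), whereas Theorem \ref{thm:minkowski-hajos} asserts an integral row of $A$ itself (a laminated tiling, all lattice points having integer $i$-th coordinate). Bridging these two conclusions for lattice cube tilings is the real content of the reverse implication.
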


The Keller conjecture is also easily seen to be true in low dimension but it turns out to be false when the dimension is high. The final dimension, 7, was recently settled and we now know that Keller's conjecture is true in dimensions up to 7 and false in dimension 8 or higher
\cite{brakensiek2022resolution,mackey2002cube,lagarias1992keller,perron1940}.

Let us now return to sketch Theorem \ref{thm:spectra-of-cube} and sketch its proof following \cite{kolountzakis2000packing}. Observe, given \eqref{spectrum-as-tiling}, that we have to prove the equivalence, for any $T \subseteq \RR^d$, 
\begin{equation}\label{ts-equivalence}
Q \text{ tiles with } T \iff \Abs{\ft{\one_Q}}^2 \text{ tiles with } T.
\end{equation}
We will pretend that \eqref{supports} is also a sufficient condition for tiling (if you do not like to pretend, please read the details in \cite{kolountzakis2000packing}) so that we have to prove the equivalence
\begin{equation}\label{pretend}
\supp\ft{\delta_T} \subseteq Z(\one_Q) \cup \Set{0} \iff
  \supp\ft{\delta_T} \subseteq Z(\Abs{\ft{\one_Q}}^2) \cup \Set{0}.
\end{equation}

First of all we can easily compute the Fourier zeros of $\one_Q$ to be all points in $\RR^d$ with at least one non-zero integer coordinate:
$$
Z(\one_Q) = \Set{(\xi_1, \ldots, \xi_d) \in \RR^d: \text{ some } \xi_j \in \ZZ\setminus\Set{0}}.
$$
Next, the Fourier Transform of $\Abs{\ft{\one_Q}}^2$ is the convolution $\one_Q*\one_Q$ which is non-zero precisely in $2Q$. The support of $\one_Q*\one_Q$ is the closure $\overline{2Q}$.

Observe that
\begin{equation}\label{z-contained}
Z(\one_Q) \subseteq Z(\Abs{\ft{\one_Q}}^2).
\end{equation}

Suppose now that $Q$ tiles with $T$. Then, by \eqref{supports}, we obtain
$$
\supp\ft{\delta_T} \subseteq Z(\one_Q) \cup \Set{0}.
$$
and by \eqref{z-contained} we obtain
$$
\supp\ft{\delta_T} \subseteq Z(\Abs{\ft{\one_Q}}^2) \cup \Set{0},
$$
so we have proved the $\Longrightarrow$ direction of \eqref{pretend}.

We now prove the other direction ($\Longleftarrow$) of \eqref{ts-equivalence}. Suppose that $\Abs{\ft{\one_Q}}^2$ tiles with $T$. It follows from orthogonality then that
$$
T-T \subseteq Z(\one_Q) \cup \Set{0} \subseteq (2Q)^c \cup \Set{0} = (Q-Q)^c \cup \Set{0}.
$$
It follows that the $T$ translates of $Q$ are packing (non-overlapping).

To conclude the proof in this direction we need the following interesting result \cite{lagarias2000orthonormal,kolountzakis2000packing} which, in some sense, says that tiling is a property more of the translation set (the tiling complement) rather than the tile itself.

This theorem is intuitively clear when $T$ is a periodic set but it is,
perhaps, suprising that it holds without any assumptions on
the set $T$.
Its proof is very simple.

Let us agree to say that $f+T$ \textit{is a packing} if $\sum_{t \in T} f(x-t) \le 1$ for almost every $x \in \RR^d$. Similarly we say $f+T$ \textit{is a tiling} if $\sum_{t\in T}f(x-t)=1$ for almost every $x \in \RR^d$.

\begin{theorem}\label{thm:tiling-from-packing}
If $f, g \ge 0$, $\int f(x)\, dx = \int g(x)\, dx = 1$ and
both $f+T$ and $g+T$ are packings of $\RR^d$, then
$f+T$ is a tiling if and only if $g+T$ is a tiling.
\end{theorem}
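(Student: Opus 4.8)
The plan is to translate everything into the convolution language of \S\ref{ss:tiling-by-a-function}. Writing $F=f*\delta_T$ and $G=g*\delta_T$, the hypothesis that both $f+T$ and $g+T$ are packings means precisely that $0\le F\le1$ and $0\le G\le1$ almost everywhere, while ``$f+T$ is a tiling'' means $F=1$ a.e. By symmetry it suffices to assume $F=1$ a.e.\ and deduce $G=1$ a.e. I would introduce the \emph{deficiency}
\[
h \;=\; F-G \;=\; (f-g)*\delta_T .
\]
Since $F=1$ and $G\le1$ we have $h=1-G\ge0$ a.e., and the entire problem reduces to showing that this nonnegative $h$ vanishes.

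First I would record that $T$ has bounded density: this is automatic here because $f\ge0$ and $0<\int f<\infty$ \cite{kolountzakis1996structure}, so there is a constant $C$ with $\Abs{T\cap(x_0+Q_R)}\le C\,\vol(Q_R)$ for every centre $x_0$ and every large cube $Q_R=[-R,R]^d$. Then I would compare the two periodizations on a large cube via Fubini,
\[
\int_{Q_R}F-\int_{Q_R}G=\int_{\RR^d}(f-g)(y)\,\Abs{T\cap(Q_R-y)}\,dy,
\]
and write $\Abs{T\cap(Q_R-y)}=\Abs{T\cap Q_R}+e(y)$, where the error $e(y)$ is bounded by the number of points of $T$ in the boundary shell $Q_R\triangle(Q_R-y)$. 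Because $\int(f-g)=0$ kills the main term, truncating $f$ and $g$ to a large ball (capturing all but an $\varepsilon$ fraction of their mass) together with the bounded density of $T$ shows the right-hand side is $o(\vol(Q_R))$, uniformly in the placement of the cube. Hence the nonnegative deficiency has zero density,
\[
\frac{1}{\vol(Q_R)}\int_{x_0+Q_R}h(x)\,dx\longrightarrow0\qquad(R\to\infty).
\]

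The step I expect to be the genuine obstacle is upgrading ``zero density'' to ``zero almost everywhere'': a nonnegative function may perfectly well have vanishing large‑scale average without vanishing (an indicator of a sparse set does this), so density alone cannot close the argument. What must be exploited is that we have an \emph{exact} tiling at our disposal, which supplies the partition of unity $\sum_{t\in T}f(x-t)=1$ for a.e.\ $x$. Multiplying by $h\ge0$ and integrating gives the exact (not merely asymptotic) identity
\[
\int_{\RR^d}h(x)\,dx=\sum_{t\in T}\int_{\RR^d}h(x)\,f(x-t)\,dx,
\]
a sum of nonnegative terms $c_t=\int h\,f(\cdot-t)=1-\int G\,f(\cdot-t)\ge0$. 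The exact tiling also pins the density of $T$ to $1/\!\int f=1$, so averaging the $c_t$ over $t\in T\cap Q_R$ and inserting $\sum_{t\in T\cap Q_R}f(x-t)\approx1$ for $x$ well inside the cube reduces their average to $\bigl(\vol Q_R\bigr)^{-1}\int_{Q_R}h\to0$. The delicate point that remains is to pass from this vanishing average of the $c_t$ to the conclusion that $h$ itself is $0$ a.e.; this is exactly where the exactness of $f+T$, rather than merely the density of $T$, is indispensable, as the example $T=\ZZ\setminus\Set{n^2:n\in\ZZ}$ shows, where a set of density one supports a packing that is not a tiling. Once $h=0$ a.e.\ is secured we have $G=1$ a.e., so $g+T$ tiles; the argument is symmetric in $f$ and $g$, giving the stated equivalence.
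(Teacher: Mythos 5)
Your proposal is not a complete proof, and you effectively concede this when you write that ``the delicate point that remains is to pass from this vanishing average of the $c_t$ to the conclusion that $h$ itself is $0$ a.e.'' That point is not a technicality to be deferred --- it \emph{is} the theorem, and nothing in your argument closes it. Both facts you actually establish (that the averages of $h$ over large cubes vanish uniformly in placement, and that the averages of the nonnegative numbers $c_t=\int h\,f(\cdot-t)\,dx$ over $t\in T\cap Q_R$ vanish) are compatible with $h\neq 0$: a nonnegative function can have uniformly zero density, as you yourself note, and a family of nonnegative numbers can have vanishing averages while a sparse subfamily stays bounded away from $0$ (say $c_t=1$ for one $t$ per huge cube and $c_t=0$ otherwise). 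So the density machinery, although correctly executed, cannot in principle force $h=0$ a.e.; your argument stops exactly where the proof has to begin.

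The missing idea --- and it is the one idea the paper's proof consists of --- is to use an \emph{exact} identity in place of asymptotic averages: integrate the tiling identity against the weight $g(-x)$ and swap the roles of $f$ and $g$ under the integral. By Tonelli and the change of variable $y=-x+t$,
$$
\int g(-x)\sum_{t\in T}f(x-t)\,dx \;=\; \sum_{t\in T}\int g(-x)f(x-t)\,dx \;=\; \int f(-y)\sum_{t\in T}g(y-t)\,dy .
$$
If $f+T$ tiles, the left side equals $\int g=1$; since $\sum_{t\in T}g(y-t)\le 1$ (packing) and $\int f(-y)\,dy=1$, the right side can equal $1$ only if $\sum_{t\in T}g(y-t)=1$ for a.e.\ $y\in-\supp f$. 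This is exactly the kind of pointwise (not averaged) vanishing of weighted integrals of $h=1-\sum_t g(\cdot-t)$ against reflected and translated copies of $f$ that your approach needed and could not reach. One then globalizes by translation invariance of the hypotheses: replacing $g$ by $g(\cdot-a)$, which still packs with $T$, shows that $g+T$ tiles $-\supp f-a$ for every $a\in\RR^d$, hence tiles all of $\RR^d$. Note that this argument needs neither the bounded density of $T$, nor the estimates on $\Abs{T\cap(Q_R-y)}$, nor any truncation of $f$ and $g$; all of that apparatus in your write-up can be discarded.
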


\begin{proof}
We first show that, under the assumptions of the Theorem,
\beql{supp-tiling}
\mbox{$f+T$ tiles $-\supp g$ } \Longrightarrow
\mbox{ $g+T$ tiles $-\supp f$}.
\eeq
Indeed, if $f+T$ tiles $-\supp g$ then
$$
1 = \int g(-x) \sum_{t\in T} f(x-t) \,dx = 
\sum_{t\in T} \int g(-x) f(x-t) \,dx,
$$
which, after the change of variable $y = -x+t$, gives
$$
1 = \int f(-y) \sum_{t\in T} g(y-t) \,dy.
$$
This in turn implies,
since $\sum_{t\in T}g(y-t) \le 1$ (from $g$ packing with $T$),
that $\sum_t g(y-t) = 1$ for
a.e.\ $y \in -\supp f$.

To complete the proof of the theorem, notice that if $f+T$ is a tiling
of $\RR^d$ and $a \in \RR^d$ is arbitrary then
both $f(x-a) + T$ and $g(x-a) + T$ are packings and
$f+T$ tiles $-\supp g(x-a) = -\supp g - a$.
We conclude that $g(x-a) + T$ tiles $-\supp f$, or
$g + T$ tiles $-\supp f - a$. Since $a\in\RR^d$ is arbitrary
we conclude that $g + T$ tiles $\RR^d$.

\end{proof}

See also \cite[Lemma 3.2]{greenfeld2020spectrality}.

To complete the proof of the $\Longleftarrow$ direction of \eqref{ts-equivalence} we note that we can apply Theorem \ref{thm:tiling-from-packing} with $f=\one_Q$ and $g=\Abs{\ft{\one_Q}}^2$ to obtain that $Q+T$ is a tiling from $\Abs{\ft{\one_Q}}^2+T$ being a tiling.

See also \cite{agora2018spectra} for a discrete version of the cube spectra.

\subsubsection{Maximal incomplete collections of exponentials for the cube and other spectral sets}\label{sss:maximal}

In \cite{kolountzakis2024maximality} one seeks orthogonal collections of exponentials for the cube which are maximal, in the sense that the collection cannot by increased and remain orthogonal, but at the same time they are not a spectrum (so it is a maximal and incomplete collection). This is in rough analogy with packings and tilings of the cube: it is easy to construct in any dimension a packing of translates of the cube which is maximal (no more cubes can be added without overlaps) but is also not a tiling. It turns out however that in dimensions 1 and 2 there are no incomplete, maximal collections of exponentials for the cube. Such collections do exist in dimension 3 and higher though, and some of them can be quite thin, in the sense that the set of frequencies they consist of are sets of zero density, unlike spectra which must be of positive density. Even in dimension 1 there are spectral sets which do admit a maximal, incomplete collection of exponentials.

\subsection{The ball and smooth convex bodies are not spectral}\label{ss:ball}

Smooth convex bodies such as the Euclidean ball are obviously not tiling. We shall see that they are also non-spectral \cite{iosevich2001convexbodies,kolountzakis2004distance,fuglede-ball}. The proof we will see is from \cite{kolountzakis2004distance} and it ties the spectrality problem to one of Geometric Ramsey Theory.

Suppose $\Lambda$ is a spectrum of $K$, a smooth, symmetric convex body in $\RR^d$.
It is a well known fact proved using the method of stationary phase (see, for example, \cite{sogge2017fourier})
that if $\xi$ is a zero of $\ft{\one_K}$ and $\xi\to\infty$
then
$$
\Norm{\xi}_{K^o} = \left({\pi\over 2} + {d\pi \over 4}\right) + k\pi + o(1),
    \ \ \ (\xi\to\infty),
$$
where $K^o$ is the dual body (which is also smooth)
and $k$ is an integer. This estimate is common to all proofs that $K$ is not spectral.

\begin{figure}[h]
\input 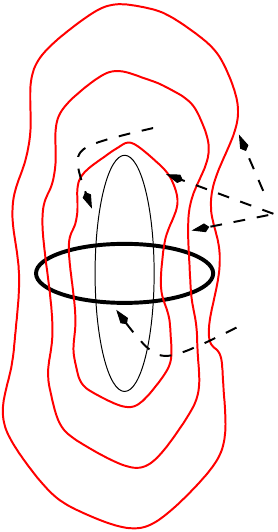_t
\caption{The zeros of $\ft{\one_K}$, when $K$ is a smooth convex body with positive Gaussian curvature come roughly at integer multiples (plus an offset) of the dual body $K^o$.}\label{fig:convex-zeros}
\end{figure}

Let $R>0$ be such that any zero $\xi$ of $\ft{\one_K}$, outside a cube of side
$R$ centered at the origin, satisfies
$$
\Norm{\xi}_{K^o} = \left({\pi\over 2} + {d\pi \over 4}\right) + k\pi + \theta,
    \ \ \ (k\in\ZZ,\ \Abs{\theta}<\pi/10).
$$
We also take $R$ to be large enough so as to be certain that we find
at least one $\Lambda$-point in any cube of side $R$.
We can do this since $\Lambda$ is well-distributed.

Let now the set $\Lambda'$ arise by keeping only one point of $\Lambda$
in each cube of the type $Rn + (-R/2, R/2)^d$, with $n\in\ZZ^d$ having
all its coordinates even. We keep nothing outside these cubes.
It follows that $\Lambda'$ is also a well
distributed set and that for any two distinct points $\lambda$ and $\mu$
of $\Lambda'$, $\mu$ is not contained in the cube of side $R$
centered at $\lambda$.
From the orthogonality of $\Lambda$ we obtain
that for any two distinct points $\lambda,\mu\in\Lambda'$ we have
$$
\Norm{\lambda-\mu}_{K^o} = k\pi + \theta,\ \ \ (k\in\ZZ,\ \Abs{\theta}\le\pi/5).
$$
This means that the set of $K^o$-distances defined by pairs of points of $\Lambda'$
has infinitely many gaps of length at least $3\pi/5$. We will now see that this contradicts the following result.

We call $\mu$, a probability measure,  $\epsilon$-\textit{good} if its Fourier Transform, $\ft{\mu}$, satisfies, for some finite $R>0$,
$$
\Abs{\ft{\mu}(\xi)} \le \epsilon \text{ if } \Abs{\xi} \ge R.
$$

\begin{theorem}\label{thm:good-measure}
Suppose that $A \subseteq \RR^d$, $d\ge 2$, has upper Lebesgue density at least $\epsilon>0$
and that the $0$-symmetric convex body $K$ affords $(C_d\epsilon)$-good probability
measures supported on its boundary (the constant $C_d$ depends on the dimension only).
Then $D_K(A)$ contains all positive real numbers beyond a point.
\end{theorem}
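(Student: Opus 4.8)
The plan is to realize the distance $t$ through a positive counting integral. Since $A$ has upper density at least $\epsilon$, for each fixed target $t$ I may choose a cube $Q$ of side $L$, with $L$ as large as I please, such that $\Abs{A\cap Q}\ge(\epsilon-o(1))L^d$. Let $\mu$ be a $(C_d\epsilon)$-good probability measure carried by $\partial K$; averaging $\mu$ with its reflection (which preserves the support, as $K=-K$, and the good bound) I may assume $\mu$ is symmetric, so $\ft\mu$ is real. Write $\mu_t$ for the push-forward of $\mu$ under $x\mapsto tx$, a probability measure on $t\partial K$ with $\ft{\mu_t}(\xi)=\ft\mu(t\xi)$, put $f=\one_{A\cap Q}$, and consider
$$
I(t)=\int_{\RR^d} f(x)\,(f*\mu_t)(x)\,dx=\int_{\RR^d}\int_{\RR^d} f(x)\,f(x-z)\,d\mu_t(z)\,dx .
$$
Every pair contributing to $I(t)$ has $x,x-z\in A$ and $z\in t\partial K$, i.e. $\Norm{z}_K=t$; hence $I(t)>0$ forces $t\in D_K(A)$, and it suffices to prove $I(t)>0$ for all large $t$.

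Passing to the Fourier side by Parseval gives $I(t)=\int_{\RR^d}\Abs{\ft f(\xi)}^2\,\ft\mu(t\xi)\,d\xi$, and I would split the frequency space into three regions governed by $\zeta=t\xi$. Near the origin, on $\Set{\Abs{\xi}\le\kappa/t}$, continuity of $\ft\mu$ at $0$ together with $\ft\mu(0)=1$ gives $\ft\mu(t\xi)\ge\tfrac12$ once $\kappa$ is small; choosing $L\gg t$ places the whole central peak of $\ft f$ (of width $\sim 1/L$, where $\Abs{\ft f}\approx\ft f(0)=\Abs{A\cap Q}$) inside this region, producing the main term
$$
\int_{\Abs{\xi}\le\kappa/t}\Abs{\ft f(\xi)}^2\,\ft\mu(t\xi)\,d\xi\ \ge\ c_d\,\epsilon^2 L^d .
$$
In the outer region $\Set{\Abs{\xi}\ge R/t}$ goodness gives $\Abs{\ft\mu(t\xi)}\le C_d\epsilon$, so this part is bounded in absolute value by $C_d\epsilon\,\Norm{f}_2^2=C_d\epsilon\Abs{A\cap Q}\le C_d\,\epsilon^2L^d(1+o(1))$; the dimensional constant $C_d$ in the hypothesis is then fixed small enough (e.g. $C_d\le c_d/2$) that this is at most half the main term.

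The main obstacle is the intermediate annulus $\Set{\kappa/t<\Abs{\xi}<R/t}$, where $\ft\mu(t\xi)$ is merely bounded by $1$ and may be negative: a priori $\int_{\kappa/t<\Abs{\xi}<R/t}\Abs{\ft f}^2$ can be a constant multiple of $\Norm{f}_2^2\sim\epsilon L^d$, which dwarfs the main term of size $\epsilon^2L^d$. The saving feature is that, since $t\to\infty$, this annulus collapses toward the origin and therefore only detects Fourier mass of $A$ at the single large length-scale $\sim t$; for a set without structure at arbitrarily large scales it carries negligible mass, and then the three estimates already give $I(t)\ge\tfrac12 c_d\epsilon^2L^d>0$. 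In general one must rule out persistent mass concentrated at scale $t$, and this is precisely where the argument becomes delicate.

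To resolve this I would follow Bourgain's treatment of the Euclidean case and replace the single-scale estimate by a multiscale/pigeonhole argument: average the bound over a geometric range of radii and over the size and position of the window $Q$, using that the total mass $\Norm{f}_2^2$ is finite so that only a sparse set of scales can carry a non-negligible share of the intermediate-band mass. Discarding those few bad scales and exploiting the freedom in $L$ for each admissible $t$, one concludes $I(t)>0$ for all sufficiently large $t$, whence $D_K(A)\supseteq[t_0,\infty)$. This control of the intermediate frequencies, rather than the clean main-term and goodness estimates, is the technical heart of the proof.
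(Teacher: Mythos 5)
A preliminary remark: the survey itself states Theorem \ref{thm:good-measure} without proof (it is imported from \cite{kolountzakis2004distance}), so the comparison here is with the known proof of that cited result, which is a Bourgain-type counting argument. Your setup is exactly that argument's skeleton, and the parts you actually prove are essentially right: $I(t)>0$ does force $t\in D_K(A)$; symmetrizing $\mu$ is legitimate since $K=-K$; the low-frequency main term of size $\gtrsim \epsilon^2L^d$ (for $L\gg t$) is correct; and the high-frequency bound $C_d\epsilon\Norm{f}_2^2$ is correct, modulo a small slip: since the density is only bounded \emph{below} by $\epsilon$, you cannot write $\Norm{f}_2^2=\Abs{A\cap Q}\le \epsilon^2 L^d(1+o(1))/\epsilon$; the comparison still survives because the main term is $\gtrsim \Abs{A\cap Q}^2L^{-d}\ge \epsilon\Abs{A\cap Q}$, which dominates $C_d\epsilon\Abs{A\cap Q}$ once $C_d$ is small.

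The genuine gap is the intermediate annulus $\Set{\kappa/t<\Abs{\xi}<R/t}$. You correctly identify it as the entire difficulty of the theorem, and then you do not prove anything about it; deferring to ``Bourgain's treatment'' is naming the missing idea, not supplying it. Worse, the way you describe its conclusion is logically off: the multiscale pigeonhole does not yield ``$I(t)>0$ for all sufficiently large $t$'' with a window chosen separately for each $t$. The correct structure is a contradiction argument with a \emph{single} window: assume there are arbitrarily large non-distances, extract finitely many of them $t_1<\cdots<t_J$ with $t_{j+1}/t_j\ge R/\kappa$ (so that the annuli $\Set{\kappa/t_j<\Abs{\xi}<R/t_j}$ are pairwise disjoint) and $J$ larger than a fixed negative power of $\epsilon$, then choose one cube $Q$ with $L\gg t_J$ on which $A$ has density $\ge\epsilon-o(1)$. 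Disjointness gives $\sum_j\int_{\mathrm{annulus}_j}\Abs{\ft{f}}^2\le\Norm{f}_2^2$, so by pigeonhole some annulus carries mass at most half the main term, whence $I(t_j)>0$ for that $j$ --- contradicting that $t_j$ was a non-distance. In particular your opening move, ``for each fixed target $t$ I may choose a cube $Q$,'' must be restructured: the window is chosen \emph{after} the finite family of candidate scales. Until this step is written out --- the ratio condition on consecutive scales in terms of $\kappa$ and $R$, the required count $J$, and the final contradiction --- what you have is a correct strategy outline rather than a proof.
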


For a set $A \subseteq \RR^d$ to have upper density at least $\epsilon$ means that we can find arbitrarily large cubes in which $A$ takes up a fraction $\epsilon$ of their measure, at least. This does not preclude $A$ from having arbitrarily large gaps (large cubes where it is completely absent).

Again, by the method of stationary phase \cite{sogge2017fourier} we know that any smooth convex body with everywhere positive Gaussian curvature is such that its boundary area measure is $\epsilon$-good for all $\epsilon>0$. In other words, if $\sigma$ is the area measure on $\partial K$ then
$$
\ft{\sigma}(\xi) \to 0 \text{ as } \Abs{\xi} \to \infty.
$$
It follows that if $K$ is a smooth, symmetric convex body with everywhere positive Gaussian curvature then, since $K^o$ is also such a set, the $K^o$-distances defined by any set $A\subseteq \RR^d$ contains an interval of the form $[t_0, \infty)$ for some finite $t_0$.

An easy corollary of Theorem \ref{thm:good-measure} is that the $K^o$-distance set of a countable set $\Lambda$ with positive counting density (for some $\epsilon>0$ there are arbitrarily large balls $B$ in which the number of points of $\Lambda$ is at least $\epsilon \, \vol(B)$), itself a countable subset of $\RR^{\ge 0}$, cannot have infinitely many gaps of width $\delta>0$, no matter how small $\delta>0$ is. (Simply adjoin a $\delta/3$-ball, in the $K^o$-norm, to each point of $\Lambda$ to obtain a set $A \subseteq \RR^d$ with positive Lebesgue density and apply Theorem \ref{thm:good-measure} to it, to derive a contradiction if infinitely many gaps appear in the distance set.)

This concludes the proof that smooth, symmetric convex bodies have no spectrum.

\subsection{A few intervals}\label{ss:a-few-intervals}

In this section we encounter a few results specific to dimension one, and especially to a domain being a collection of a few intervals.

The following result \cite{laba2001twointervals} appears easier than it really is.
\begin{theorem}\label{thm:two-intervals}
The Fuglede Conjecture holds for unions of two intervals on the real line. That is, if $E \subseteq \RR$ is a union of two intervals then $E$ tiles $\RR$ if and only if $E$ has a spectrum.
\end{theorem}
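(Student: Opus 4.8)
The plan is to funnel both properties through the Fourier zero set $Z(\one_E)$ and then collapse the question to a finite combinatorial problem about a union of two blocks of integers, where tiling and spectrality can be compared by hand.

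First I would normalize. Since both ``tiles'' and ``is spectral'' are preserved by the affine maps of $\RR$ (a dilation merely rescales a spectrum, a translation changes nothing), I may assume
$$
E = [0,a] \cup [c, c+b], \qquad a,b>0,\ c>a,
$$
so the lengths are $a,b$ and $(a,c)$ is a genuine gap; the degenerate case $c=a$ is a single interval, where both sides of the conjecture are classical. For $\xi\neq 0$ a direct computation gives
$$
2\pi i \xi\, \ft{\one_E}(\xi) = \left(1 - e^{-2\pi i a\xi}\right) + e^{-2\pi i c\xi}\left(1 - e^{-2\pi i b\xi}\right),
$$
and I record the factorization available exactly when the lengths are equal, $a=b$:
$$
\ft{\one_E}(\xi) = \ft{\one_{[0,a]}}(\xi)\left(1 + e^{-2\pi i c\xi}\right),
$$
i.e. $E = [0,a] \oplus \Set{0,c}$. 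Thus the geometry splits naturally into the equal-length and unequal-length cases.

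Next I would invoke the structure theorems to force rationality. On the tiling side, a compactly supported tile produces, by the one-dimensional results of \S\ref{ss:structure}, a periodic translation set $T$ that is a finite union of arithmetic progressions; hence $\ft{\delta_T}$ is supported on a lattice $\tfrac1p\ZZ$, and \eqref{supports} becomes the requirement that $\ft{\one_E}$ vanish at the relevant points of $\tfrac1p\ZZ$, which forces $a,b,c$ to be commensurable. On the spectral side I would start from the tiling \eqref{spectrum-as-tiling}, $\Abs{\ft{\one_E}}^2 * \delta_\Lambda = \vol(E)^2$, which places $\supp\ft{\delta_\Lambda}$ inside $(E-E)^c \cup \Set{0}$, and then apply Meyer's theorem (Theorem \ref{thm:meyer}) to $\mu=\delta_\Lambda$ to conclude that $\Lambda$ too is a finite union of arithmetic progressions; intersecting this with the explicit zeros of $\ft{\one_E}$ again pins the parameters to a rational grid. \emph{This step is where I expect the real difficulty to lie}: showing that a spectrum, a priori merely a well-distributed set, must be periodic with a rational period. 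The delicate point is verifying the hypotheses of Theorem \ref{thm:meyer} for $\delta_\Lambda$ --- that $\ft{\delta_\Lambda}$ is locally a measure of at most linear growth --- which rests on the fact that the multiplier $\one_E*\one_{-E}$ vanishes only to first order at the boundary of $E-E$, so the order-counting argument of \S\ref{ss:structure} applies.

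Once commensurability is in hand I would subdivide: writing $a,b,c$ as integer multiples of a common unit and cutting each interval into unit cells identifies $E$ with a set
$$
D = \Set{0,1,\dots,m-1} \cup \Set{n, n+1, \dots, n+k-1} \subseteq \ZZ,
$$
a union of two blocks of sizes $m,k$ with $n>m$. By the dictionary between periodic tilings (resp.\ spectra) of $\RR$ by unions of unit cells and tilings (resp.\ spectra) of $\ZZ$ and of finite cyclic groups (cf.\ \S\ref{ss:not-about}), the claim reduces to showing $D$ tiles $\ZZ$ if and only if $D$ is spectral. I would then settle this finite problem directly. In the equal-block case $m=k$ one has $D = \Set{0,\dots,m-1} \oplus \Set{0,n}$, and I would prove that $D$ both tiles and is spectral exactly when $m \mid n$: for $m\mid n$ one exhibits an explicit periodic tiling complement and, in parallel, an explicit orthogonal complete family of characters, while for $m\nmid n$ a short residue/period argument rules out both. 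In the unequal-block case $m\neq k$ I would show that a genuine two-block set neither tiles $\ZZ$ nor is spectral --- so the conjecture holds vacuously --- most cleanly again through the finite-group reduction, where the relevant cyclic-group instances of Fuglede are known, combined with the observation that blocks of different length cannot be matched across a tiling. Assembling the two cases gives tiling $\iff$ spectral for every union of two intervals, as asserted.
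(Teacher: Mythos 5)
Your reduction collapses at its first load-bearing step: commensurability of $a,b,c$ is \emph{not} forced, either by tiling or by spectrality. The classification quoted in \S\ref{ss:a-few-intervals} (Fig.\ \ref{fig:fuglede-two}, case (2)) contains the family $E=[0,a]\cup[k+a,\,k+1]$ with $k$ a nonnegative integer and $a\in(0,1)$ \emph{arbitrary}, in particular irrational: this $E$ is a fundamental domain of $\ZZ$ (cut $[0,1]$ at $a$ and shift the right piece by $k$), so it tiles $\RR$ with $T=\ZZ$ and, by Theorem \ref{thm:lattice-fuglede}, has spectrum $\Lambda=\ZZ$. Here $\ft{\one_E}$ vanishes on $\ZZ\setminus\Set{0}$ by cancellation \emph{between} the two intervals, with no rationality of $a$, $b=1-a$, $c=k+a$ anywhere in sight; so the inference ``vanishing on a lattice forces commensurability'' is false, and your funnel to a two-block subset of $\ZZ$ discards exactly the examples that make the theorem nontrivial. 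Your discrete endgame is wrong for the same reason: unequal blocks can perfectly well tile and be spectral --- $D=\Set{0,1,2}\cup\Set{8,9}$ is a complete residue system modulo $5$, hence tiles $\ZZ$ with $5\ZZ$ and has spectrum $\Set{0,\tfrac15,\tfrac25,\tfrac35,\tfrac45}$ --- because the discrete analogue of the fundamental-domain family exists too, so the unequal-block case is not vacuous.

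The spectral-side appeal to Meyer's theorem is also unjustified, and the repair you sketch does not work. For a spectrum the support condition reads $\supp\ft{\delta_\Lambda}\subseteq (E-E)^c\cup\Set{0}$, and $(E-E)^c$ contains two half-lines; it is nothing like a discrete set. The order-counting argument of \S\ref{ss:structure} needs the admissible support to consist of isolated points (a point supports only a point mass and finitely many of its derivatives); a set with nonempty interior supports arbitrary tempered distributions, and no order of vanishing of $\one_E*\one_{-E}$ at the boundary of $E-E$ changes that. This is precisely the obstruction spelled out in \S\ref{ss:periodicity-of-spectrum}: the structure theorems require a compactly supported tile, whereas here the tile $\Abs{\ft{\one_E}}^2$ is analytic, and periodicity of one-dimensional spectra (Theorem \ref{thm:periodicity-of-spectrum}) has to be proved instead by the symbolic-dynamics argument resting on the F.\ and M.\ Riesz theorem. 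Even granting periodicity, it gives an integer period, not rational endpoints: the irrational fundamental domain above has spectrum exactly $\ZZ$. Note finally that the survey itself does not reprove Theorem \ref{thm:two-intervals}; it quotes the classification from \cite{laba2001twointervals}, whose proof (and its modern replacement via weak tiling, \cite{lev2022fuglede}, \S\ref{s:w-tiling}) necessarily handles the incommensurable family that your reduction cannot see.
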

Recently, the proof of Theorem \ref{thm:two-intervals} has become easier using the theory of weak tiling due to Lev and Matolcsi \cite{lev2022fuglede} (see \S \ref{s:w-tiling}), which was not available at the time Theorem \ref{thm:two-intervals} was proved in \cite{laba2001twointervals}. It is shown in \cite{laba2001twointervals} that a union $E$ of two intervals (say it has measure 1) is a tile or spectral if and only if it is of one of the following forms (see Fig. \ref{fig:fuglede-two})
\begin{enumerate}
\item
$$
E = [x, x+\frac12] \cup [x+\frac12 + k/2, x+k+1],
$$
for some nonnegative integer $k$, or
\item 
$$
E = [x, x+a] \cup [x+k+a, x+k+1],
$$
for some $0 < a < 1$ and a nonnegative integer $k$.
\end{enumerate}

\begin{figure}[h]
\ifdefined\SMART\resizebox{10cm}{!}{\input 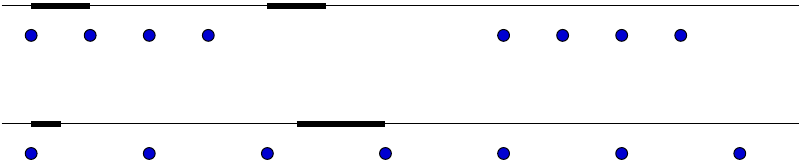_t}\else
\input fuglede-two.pdf_t
\fi
\caption{The two cases when a union of two intervals can tile or be spectral. In both cases the tiling complement is shown underneath the set. In the upper case we have two equal intervals if length 1/2 each, say. In this case the gap between them must be a half-integer. In the lower case the two intervals can be an arbitrary subdivision of $[0, 1]$, one of whose intervals has been moved over by an integer, thus remaining a fundamental domain of $\ZZ$ in $\RR$.
}\label{fig:fuglede-two}
\end{figure}

It is indicative of the hardness of even this problem that the naturally next case, that of three intervals, has not been resolved yet:
For a union of three intervals the ``tiling $\Longrightarrow$ spectral'' direction was established in \cite{bosefuglede}. But the other direction, ``spectral $\Longrightarrow$ tiling'', has not yet been proved in general \cite{bosefuglede,bose2014spectral}.

A single interval of course satisfies the Fuglede Conjecture as it is both a tile and spectral. Perturbations of intervals satisfy it too \cite{kolountzakis2004tiling}.
\begin{theorem}\label{thm:near-intervals}
Suppose $E\subseteq[0,L]$ is measurable with measure $1$ and
$L = 3/2-\epsilon$ for some $\epsilon>0$.

Let $\Lambda\subset\RR$ be a discrete set containing $0$.
Then
\begin{enumerate}
\item 
if $E+\Lambda =\RR$ is a tiling, it follows that $\Lambda=\ZZ$.\\
\item 
if $\Lambda$ is a spectrum of $E$, it follows that $\Lambda=\ZZ$.
\end{enumerate}
Therefore $E$ is a tile if and only if it is spectral.
\end{theorem}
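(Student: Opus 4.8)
The plan is to reduce both statements to a single lower bound on the gaps of $\Lambda$ and then invoke two tools already in hand: Theorem~\ref{thm:tiling-from-packing} and Theorem~\ref{thm:lattice-fuglede}. Normalise $\vol(E)=1$. In case~(1) the hypothesis is the tiling $\one_E*\delta_\Lambda=1$, while in case~(2) completeness is, by \eqref{spectrum-as-tiling}, exactly the tiling $\Abs{\ft{\one_E}}^2*\delta_\Lambda=1$ at level $1$, and here $\Abs{\ft{\one_E}}^2\ge0$ with $\int\Abs{\ft{\one_E}}^2=\vol(E)=1$ by Plancherel. Thus in either case $\Lambda$ is a density-$1$ tiling complement, at level $1$, of a nonnegative $L^1$ function of integral $1$; translating $\Lambda$ (which preserves both being a tiling set and being a spectrum) we may assume $0\in\Lambda$. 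Writing $\Lambda=\Set{\lambda_n}$ increasingly, I claim that \emph{every gap satisfies $\lambda_{n+1}-\lambda_n\ge1$}. Granting this, $\one_{[0,1)}+\Lambda$ is a packing, so Theorem~\ref{thm:tiling-from-packing}, applied with $g=\one_{[0,1)}$ and with $f=\one_E$ (resp.\ $f=\Abs{\ft{\one_E}}^2$), upgrades the given tiling of $f$ into a tiling of $\RR$ by the unit interval; such a tiling forces all gaps to equal $1$, hence $\Lambda=\ZZ+c$, and $0\in\Lambda$ gives $\Lambda=\ZZ$. This proves (1) and (2). The final equivalence is then immediate from Theorem~\ref{thm:lattice-fuglede}: since $\ZZ^*=\ZZ$, the set $E$ tiles by $\ZZ$ if and only if $\ZZ$ is a spectrum of $E$, and by (1),(2) both ``$E$ is a tile'' and ``$E$ is spectral'' are equivalent to this one condition.

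Everything therefore rests on the gap claim, and this is exactly where $L<3/2$ must be used. Two halves of it are easy and already reveal the mechanism. For the upper bound, a point just to the left of $\lambda_{n+1}$ can only be covered by a translate $E+\lambda_j$ with $\lambda_j\le\lambda_n$, whose support reaches at most to $\lambda_n+L$; covering forces $\lambda_{n+1}\le\lambda_n+L$, so every gap is $\le L$. For a lower bound in case~(1), the two neighbouring translates $E+\lambda_n$ and $E+\lambda_{n+1}$ are disjoint (packing), have total measure $2$, and sit inside an interval of length $(\lambda_{n+1}-\lambda_n)+L$; hence $\lambda_{n+1}-\lambda_n\ge2-L>2-\tfrac32=\tfrac12$. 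So gaps lie in $[\,2-L,\,L\,]\subseteq(\tfrac12,\tfrac32)$, and the threshold is sharp: at $L=\tfrac32$ the set $E=[0,\tfrac12]\cup[1,\tfrac32]$ tiles with $\Lambda=2\ZZ\cup(2\ZZ+\tfrac12)$, whose gaps are $\tfrac12$ and $\tfrac32$.

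The crux is to sharpen the lower bound from $2-L$ to $1$, and this is the step I expect to be the main obstacle. In case~(1) I would first appeal to the one-dimensional structure theory of \S\ref{ss:structure}: since $\one_E$ has compact support, the structure theorem makes $\Lambda$ a finite union of arithmetic progressions, which after a harmless reduction is periodic with integer period $k$; if $k=1$ we are done, and if $k\ge2$ the tiling descends to a tiling of the circle $\RR/k\ZZ$ (of length $k>L$) by the set $E$, which occupies an arc of length $L<\tfrac32$. On this finite model a short covering/packing analysis, using that each translate spans less than $\tfrac32$ of the circle, pins every gap to $1$; the available slack is precisely $\tfrac32-L=\epsilon$. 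The genuinely harder case is the spectrum, case~(2), where $\Abs{\ft{\one_E}}^2$ is \emph{not} compactly supported, so periodicity of $\Lambda$ is no longer automatic. There one must extract the gap bound directly from orthogonality together with completeness: orthogonality says $\Lambda-\Lambda\subseteq Z(\ft{\one_E})\cup\Set{0}$, and since $\one_E$ is supported in an interval of length $L<\tfrac32$, $\ft{\one_E}$ is entire of exponential type $\pi L<\tfrac{3\pi}{2}$ with a discrete zero set of density $<\tfrac32$; combining this rigidity of $\Lambda-\Lambda$ with the density-$1$ completeness tiling and the bound $E-E\subseteq(-L,L)$ should push the interval of forbidden gaps out from the initial neighbourhood of $0$ (where $\ft{\one_E}(0)=1>0$) to all of $(0,1)$, yielding gaps $\ge1$ as needed.
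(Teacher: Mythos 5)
The survey states Theorem~\ref{thm:near-intervals} without proof (it is imported from \cite{kolountzakis2004tiling}), so there is no in-paper argument to compare against; judged on its own, your proposal is a correct outer reduction wrapped around a missing core. The outer layer is fine: granting that every gap of $\Lambda$ is at least $1$, the function $\one_{[0,1)}$ packs with $\Lambda$, and Theorem~\ref{thm:tiling-from-packing} (applied with $f=\one_E$ in case (1) and $f=\Abs{\ft{\one_E}}^2$ in case (2), both nonnegative of integral $1$, the latter by Plancherel, and both tiling at level $1$ by hypothesis resp.\ by \eqref{spectrum-as-tiling}) upgrades the hypothesis to a tiling of $\RR$ by unit intervals along $\Lambda$, which forces all gaps to equal $1$, hence $\Lambda=\ZZ$; the closing equivalence via Theorem~\ref{thm:lattice-fuglede} with $\ZZ^*=\ZZ$ is also correct. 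The problem is that the gap claim on which everything rests is not proved, and it \emph{is} the theorem: what you actually establish --- gaps lie in $[2-L,L]$ --- is the easy part, and you yourself flag the passage from $2-L$ (barely above $\tfrac12$) to $1$ as ``the main obstacle,'' offering only a plan for each case.

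Neither plan, as described, closes this hole. In case (2) your mechanism is orthogonality, $\Lambda-\Lambda\subseteq Z(\ft{\one_E})\cup\Set{0}$, combined with exponential-type and zero-density bounds for $\ft{\one_E}$; but no argument of this shape can yield gaps $\ge 1$, because orthogonality does not forbid differences near $\tfrac12$: for $E=[0,\tfrac12]\cup[1-\delta,\tfrac32-\delta]$, a set satisfying all your hypotheses with $\epsilon=\delta$, one has
\begin{equation*}
\ft{\one_E}(\xi)=\ft{\one_{[0,1/2]}}(\xi)\,\bigl(1+e^{-2\pi i(1-\delta)\xi}\bigr),
\end{equation*}
which vanishes at $\xi=\frac{1}{2(1-\delta)}\approx\tfrac12$. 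So any correct proof must use completeness quantitatively --- for instance through the tiling $\Abs{\ft{\one_E}}^2*\delta_\Lambda=1$, whose Fourier side gives $\supp\ft{\delta_\Lambda}\subseteq\Set{0}\cup\Set{x:\Abs{E\cap(E+x)}=0}\subseteq\Set{0}\cup\Set{\Abs{x}\ge 2-L}$ --- and your sketch never engages it beyond the words ``should push.'' In case (1), the appeal to the structure theorem of \S\ref{ss:structure} does legitimately give periodicity with integer period $k$ (a level-$1$ indicator tiling is indecomposable, since component levels would be positive integers summing to $1$), but reducing to a tiling of the circle $\RR/k\ZZ$ merely relocates the difficulty: showing that $k$ translates of a measure-$1$ set lying in an arc of length $<\tfrac32$ can only tile that circle with all gaps equal to $1$ is not a ``short covering/packing analysis'' --- already for $k=2$ it requires a genuine rigidity argument, and this is precisely the kind of work that occupies most of \cite{kolountzakis2004tiling}. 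As it stands, the proposal is a correct frame around the theorem's actual content, not a proof of it.
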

That is, small perturbations of intervals that tile, are lattice tiles, hence spectral sets as well. And small perturbations of intervals that are spectral do have a lattice spectrum, so they tile too.

The situation is not so satisfactory for perturbations of higher dimensional cubes \cite{kolountzakis2004tiling}.
\begin{theorem}\label{thm:near-squares}
Let $E\subset \RR^2$ be a measurable set such that
$[0,1]^2\subset E\subset [-\epsilon,1+\epsilon]^2$
for $\epsilon>0$ small enough.
Assume that $E$ tiles $\RR^2$ by translations.

Then $E$ also admits
a tiling with a lattice $\Lambda\subseteq\RR^2$ as the translation set.
\end{theorem}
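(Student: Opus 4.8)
The plan is to combine geometric rigidity of the translation set $T$ with Fuglede's lattice criterion (Theorem \ref{thm:lattice-fuglede}) to manufacture a lattice tiling. Normalize $V := \vol(E) \ge 1$ and translate so that $0 \in T$. Two constraints on $T$ follow at once from the sandwich $[0,1]^2 \subseteq E \subseteq [-\epsilon, 1+\epsilon]^2$. First, the function $\one_E * \one_{-E}$ dominates $\one_{[0,1]^2} * \one_{-[0,1]^2}$, which is strictly positive on $(-1,1)^2$; since the translates $E + t$ are (a.e.) disjoint, the overlap measure $\one_E*\one_{-E}(t-t')$ vanishes for distinct $t,t' \in T$, forcing $(T - T) \cap (-1,1)^2 = \Set{0}$, i.e. $\Norm{t-t'}_\infty \ge 1$. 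Second, since $E \subseteq [-\epsilon,1+\epsilon]^2$ and the translates cover $\RR^2$, every axis-parallel square of side $1+2\epsilon$ must contain a point of $T$, so $T$ is relatively dense with gap at most $1+2\epsilon$. Finally $\dens(T) = 1/V$. Hence, for $\epsilon$ small, $T$ is a Delone set that is a small perturbation of a lattice of covolume $V$.

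Next I would reformulate the goal. By Theorem \ref{thm:lattice-fuglede}, exhibiting a lattice tiling $E + L = \RR^2$ is equivalent to producing a lattice $M$ (the dual $L^*$) of covolume $1/V$ with $\ft{\one_E}$ vanishing on $M \setminus \Set{0}$, i.e. $M \setminus \Set{0} \subseteq Z(\one_E)$. To locate the zeros, write $\one_E = \one_{[0,1]^2} + h$ with $h = \one_{E \setminus [0,1]^2} \ge 0$ and $\int h = V - 1 = O(\epsilon)$, so that $\Abs{\ft{\one_E}(\xi) - \ft{\one_{[0,1]^2}}(\xi)} \le V-1$ everywhere. Since $\Abs{\ft{\one_{[0,1]^2}}(\xi)} = \Abs{\frac{\sin \pi \xi_1}{\pi \xi_1}}\,\Abs{\frac{\sin \pi \xi_2}{\pi\xi_2}}$, any zero of $\ft{\one_E}$ in a bounded region must lie in a thin $O(\epsilon)$-tube around the grid lines $\Set{\xi_1 \in \ZZ} \cup \Set{\xi_2 \in \ZZ}$. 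This pins the shape of any admissible $M$ to a small perturbation of $\ZZ^2$ and, together with $\supp \ft{\delta_T} \subseteq Z(\one_E) \cup \Set{0}$ coming from the given tiling, ties the zero set to the arithmetic of $T$.

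The heart of the argument is to upgrade the soft rigidity of $T$ into an exact periodic (lattice) tiling. Using the local covering-plus-separation analysis, each $t \in T$ acquires well-defined nearest neighbours to the east/west and north/south at sup-distance in $[1, 1+2\epsilon]$; this organizes $T$ into horizontal rows which are full arithmetic progressions, so the tiling is ``brick-like''. Consecutive rows differ by a horizontal phase shift, and --- this is the crucial point --- these shifts are not free but are dictated by how the $\epsilon$-decoration $E \setminus [0,1]^2$ of one copy must interlock with the complementary gaps left by the copies of the neighbouring row. I would argue that these inter-row constraints admit a periodic resolution: one can choose the phases to form an arithmetic progression along a fixed vertical period, and relabelling the rows accordingly yields a genuine lattice $L$ of covolume $V$ for which $E + L$ is still a packing. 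Being a packing by a set of measure $V$ along a lattice of covolume $V$, it has density one and, being periodic, leaves no room for holes, hence is a tiling (as in the argument of \S\ref{ss:lattice-fuglede}). Equivalently, $Z(\one_E)$ then contains the required dual lattice $M = L^*$, and Theorem \ref{thm:lattice-fuglede} converts this into the lattice tiling.

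The main obstacle is precisely this last periodization step, and it is the difficulty highlighted in \S\ref{ss:structure-plane-space}: in the plane $Z(\one_E)$ is a union of curves supporting a large variety of tempered distributions, so the inclusion $\supp\ft{\delta_T}\subseteq Z(\one_E)$ does not by itself impose periodicity on $T$. One must use the quantitative smallness of $\epsilon$ in an essential way to control how the decoration propagates phase constraints from each row to the next and to exclude genuinely aperiodic phase patterns, thereby guaranteeing that a consistent periodic (hence lattice) choice exists. It is exactly here that the hypothesis ``$\epsilon$ small enough'' is used, and the breakdown of this rigidity for larger perturbations is what makes the planar statement weaker than its one-dimensional counterpart (Theorem \ref{thm:near-intervals}).
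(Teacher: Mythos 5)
Your proposal is a strategy outline, not a proof: the soft facts in your first two paragraphs are correct, but the two steps that carry all the weight are asserted rather than established, and your own closing paragraph concedes this. First, the claim that $T$ ``organizes into horizontal rows which are full arithmetic progressions'' does not follow from the separation bound $\Norm{t-t'}_\infty \ge 1$ and the covering bound alone. These make $T$ a Delone set close to a square grid, but they do not rule out rows with variable spacing, rows that are not translates of one another, or a column (rather than row) lamination; even for the unperturbed square the row/column lamination of all translational tilings is a nontrivial planar fact, and for the decorated set $E$ one must additionally argue that the fixed decoration $E\setminus[0,1]^2$ forces consecutive tiles in a row to one common spacing --- which is exactly where the interlocking geometry and the smallness of $\epsilon$ have to do real work. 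Second, and more seriously, the sentence ``one can choose the phases to form an arithmetic progression \dots and $E+L$ is still a packing'' \emph{is} the theorem. To justify it you would need at least: (i) that a tile in one row meets only the two adjacent rows (this does follow from $\epsilon$ small); (ii) that the compatibility constraint between adjacent rows depends only on their relative offset; and (iii) that a single relative offset observed in the given tiling can be repeated indefinitely without violating the within-row and between-row constraints simultaneously. None of (ii)--(iii) is argued; you instead flag the exclusion of ``genuinely aperiodic phase patterns'' as an open obstacle. A proof that ends by naming its own missing step has a genuine gap.

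Two further remarks. The Fourier material in your second paragraph is correct but inert: the $O(\epsilon)$-tube localization of $Z(\one_E)$ holds only in bounded regions (since $\ft{\one_{[0,1]^2}}$ itself decays, the constraint is vacuous at high frequencies), so it cannot by itself produce a full dual lattice $M\subseteq Z(\one_E)\cup\Set{0}$ as required by Theorem \ref{thm:lattice-fuglede}; it never interacts with your geometric argument. Also, for calibration: the survey states Theorem \ref{thm:near-squares} without proof, citing \cite{kolountzakis2004tiling}, so there is no in-paper argument to compare against; judged on its own, your text reduces the statement to an unproved claim of essentially the same difficulty, namely that some tiling by $E$ can be rearranged row by row into a periodic one.
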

So if $E$ is a sufficiently small perturbation of the square and it tiles, then it tiles also with a lattice, so it is spectral as well. The corresponding statement for spectral perturbations of squares is still open.

\qq{
Prove the missing ``spectral $\Longrightarrow$ tiling'' direction for three intervals. If such a set is spectral then it must be weakly tiling (see \S \ref{s:w-tiling}). Perhaps, for such sets, it can be proved that weak tiling implies tiling, thus completing the proof.

The same is suggested for the ``spectral $\Longrightarrow$ tiling'' direction for perturbations of a square as in Theorem \ref{thm:near-squares}.

In both questions it may make sense to first attack the discrete problem: consider your intervals to be intervals of integers and your perturbed square to be a perturbation of the discrete square $\Set{1, 2, \ldots, N}^2$.
The essence of the problem is still there and one does not have to think about subtleties in the weak tiling measure. In the discrete context the measures are all sequences on the integers.
}

\subsection{Periodicity of the spectrum in dimension one}\label{ss:periodicity-of-spectrum}
Another similarity between tiles and spectral sets is periodicity. We know from \cite{leptin1991uniform,lagarias1996tiling,kolountzakis1996structure} that for a bounded measurable set $E \subseteq \RR$ that tiles by translation with the tiling complement $T \subseteq \RR$ the set $T$ must be periodic: there exists a positive number $t$ such that $T+t = T$. The same turns out to be true for bounded spectral subsets of $\RR$ \cite{bose2011spectrum,kolountzakis2012periodicity,iosevich2013periodicity}.
\begin{theorem}\label{thm:periodicity-of-spectrum}
Suppose that $\Lambda$ is a spectrum of $E \subseteq \RR$, a bounded measurable set of
measure 1.

Then $\Lambda$ is periodic and any period is a positive integer.
\end{theorem}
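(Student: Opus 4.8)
The plan is to reduce everything to Meyer's theorem (Theorem \ref{thm:meyer}) applied to the measure $\delta_\Lambda$. First I normalize $\vol(E)=1$ and, since any translate of a spectrum is a spectrum, I assume $0\in\Lambda$. By \eqref{spectrum-as-tiling} spectrality is the tiling $\Abs{\ft{\one_E}}^2*\delta_\Lambda=1$ at level $1$; as recorded in \S\ref{ss:spectrality-as-tiling} this already forces $\Lambda$ to have density exactly $1$ and to be uniformly discrete. Two structural constraints are then available. On the space side, orthogonality gives $\Lambda-\Lambda\subseteq Z(\ft{\one_E})\cup\Set{0}$, and since $0\in\Lambda$ also $\Lambda\subseteq Z(\ft{\one_E})\cup\Set{0}$. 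On the Fourier side, \eqref{supports} applied to the tile $f=\Abs{\ft{\one_E}}^2$ (whose transform is $\ft f=\one_E*\one_{-E}$, supported in $\overline{E-E}$) gives $\supp\ft{\delta_\Lambda}\subseteq(E-E)^c\cup\Set{0}$. Finally, because $E$ is bounded, $\ft{\one_E}$ is entire of exponential type (Paley--Wiener), so $Z(\ft{\one_E})$ is locally finite and, by Jensen's formula, $\#\pp{Z(\ft{\one_E})\cap[-R,R]}=O(R)$.

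My target is the coset--ring conclusion of Theorem \ref{thm:meyer}, namely $\Lambda=F\triangle\bigcup_{j=1}^J(\alpha_j\ZZ+\beta_j)$, from which periodicity will follow. For $\mu=\delta_\Lambda$ the coefficients lie in the finite set $\Set{1}$ and $\Lambda$ has bounded density, so the only hypothesis that needs real work is that $\ft{\delta_\Lambda}$ be locally a measure with $\Abs{\ft{\delta_\Lambda}([-R,R])}=O(R)$.

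\textbf{This regularity of $\ft{\delta_\Lambda}$ is the main obstacle.} In the structure theory of \S\ref{ss:structure} one gets it for free: there the tile has compact support, $Z(\ft f)$ is a discrete set, and a distribution supported on a discrete set is a sum of derivatives of point masses, which a simple duality argument against the bounded density of the translates collapses to uniformly bounded point masses. Here that argument breaks, because the roles are reversed: it is $\ft f=\one_E*\one_{-E}$ that has compact support, so the zero set of the transform of the tile is the complement of a bounded set --- very far from discrete --- and the Fourier support condition alone says nothing about atomicity of $\ft{\delta_\Lambda}$. The discreteness we do have now sits on the space side, in $\Lambda-\Lambda\subseteq Z(\ft{\one_E})$. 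The route I would take is to pass to the autocorrelation $\gamma=\lim_{R\to\infty}\frac{1}{2R}\sum_{\lambda,\mu\in\Lambda\cap[-R,R]}\delta_{\lambda-\mu}$ (which exists along a subsequence by bounded density and is positive definite, so $\ft\gamma\ge0$ is a positive measure). Since $\Lambda-\Lambda$ lies in the locally finite, linearly growing set $Z(\ft{\one_E})\cup\Set{0}$, the measure $\gamma$ is purely atomic there with $\gamma(\Set{0})=1$ and coefficients in $[0,1]$; a second--moment count --- there are $\sim N^2$ differences from $\Lambda\cap[0,N]$ but only $O(N)$ available positions in $Z(\ft{\one_E})$ --- bounds these coefficients, and feeding this back through the relation $\ft\gamma=\Abs{\ft{\delta_\Lambda}}^2$ is meant to yield that $\ft{\delta_\Lambda}$ is indeed locally a measure of linear growth. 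I expect this transfer step to carry essentially all the difficulty, and it is exactly the point addressed by the cited works \cite{bose2011spectrum,kolountzakis2012periodicity,iosevich2013periodicity}.

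With the hypotheses of Theorem \ref{thm:meyer} verified, I obtain $\Lambda=F\triangle\bigcup_{j=1}^J(\alpha_j\ZZ+\beta_j)$. Two incommensurable steps $\alpha_i,\alpha_j$ would make $(\alpha_i\ZZ)-(\alpha_j\ZZ)$ dense in $\RR$ and hence force a dense subset into the locally finite set $Z(\ft{\one_E})$, which is impossible; so all $\alpha_j$ are commensurable and $\bigcup_j(\alpha_j\ZZ+\beta_j)$ is genuinely periodic. The finite defect $F$ is then removed, and eventual periodicity is upgraded to exact periodicity, using the homogeneity of the tiling equation \eqref{spectrum-as-tiling}: it holds at every point of $\RR$, so a localized defect in $\Lambda$ cannot persist against a relation that is invariant under the period. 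This proves $\Lambda+p=\Lambda$ for some $p>0$. Finally the period must be a positive integer: by periodicity every fundamental domain $[x,x+p)$ contains the same number $n$ of points of $\Lambda$, and since $\dens\Lambda=1$ we get $n/p=1$, i.e.\ $p=n\in\ZZ^{+}$.
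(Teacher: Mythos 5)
Your proposal correctly isolates the main obstacle, but it does not overcome it, and the route you sketch for bridging it is precisely the one that fails. The hypothesis of Meyer's theorem that you need --- that $\ft{\delta_\Lambda}$ is locally a measure with $\Abs{\ft{\delta_\Lambda}}([-R,R])=O(R)$ --- is never established: you write that the autocorrelation argument ``is meant to yield'' it and that this step ``carries essentially all the difficulty,'' which is an admission that the core of the proof is missing. Moreover, the bridge cannot be built this way. The identity $\ft{\gamma}=\Abs{\ft{\delta_\Lambda}}^2$ is only heuristic: $\ft{\delta_\Lambda}$ is a tempered distribution, not a function, so its ``modulus squared'' has no pointwise meaning, and the rigorous object (the diffraction measure $\ft{\gamma}$) retains only the second-order statistics of $\Lambda$. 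Knowing that $\ft{\gamma}$ is a uniformly bounded purely atomic measure places essentially no constraint on whether $\ft{\delta_\Lambda}$ itself is locally a measure; there is no transfer theorem of the shape you need, and this loss of information in passing from $\delta_\Lambda$ to its autocorrelation is a well-known phenomenon. Note also that the mechanism that makes $\ft{\delta_T}$ a measure in the compact-support structure theory of \S\ref{ss:structure} --- a distribution supported on a \emph{discrete} set of linear growth, collapsed to point masses by duality --- is unavailable here for the reason you yourself state: the support condition \eqref{supports} only places $\supp\ft{\delta_\Lambda}$ inside $(E-E)^c\cup\Set{0}$, the complement of a bounded set. The paper explicitly flags this (``A different approach is needed'') before giving its proof.

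The paper's actual argument is entirely different and avoids Meyer's theorem. It uses only two pieces of Fourier-analytic information: the gaps $\lambda_{n+1}-\lambda_n$ take values in a \emph{finite} set $\Sigma$ (because $\ft{\one_E}$ is analytic, so $Z(\ft{\one_E})$ is locally finite, plus the upper bound on gaps from density), and $\supp\ft{\delta_\Lambda}$ misses an interval $(0,a)$. One then encodes $\Lambda$ as a point of the compact shift space $\Sigma^\ZZ$ and considers the closed, shift-invariant set $X$ of all gap sequences with this spectral-gap property. Lemma \ref{lm:half-lines} shows, via the F.\ and M.\ Riesz theorem, that elements of $X$ are determined by any half-line (two elements agreeing on a half-line would produce a nonzero measure supported on a half-line whose Fourier transform vanishes on an interval, which is impossible); Lemma \ref{lm:diag} upgrades this by a compactness/diagonal argument to determination by a finite window $w$; and then the pigeonhole principle (finitely many colorings of a $w$-window by $\Sigma$) forces the gap sequence to be periodic. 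The integrality of the period follows, as in your last step, from $\dens\Lambda=1$ --- that part of your proposal is fine, but it is the only part that survives; everything before it either remains unproved or rests on a transfer principle that does not exist.
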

In \cite{bose2011spectrum,kolountzakis2012periodicity} Theorem \ref{thm:periodicity-of-spectrum} was proved when $E$ is a finite union of intervals.

\begin{corollary}\label{cor:multiple-tiling}
If $E$, a bounded measurable set of
measure 1, is spectral then $E$ tiles the real line at some integer
level $T$ when translated at the locations $T^{-1} \ZZ$.
\end{corollary}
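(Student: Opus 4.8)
The plan is to extract from Theorem~\ref{thm:periodicity-of-spectrum} an integer period of the spectrum and then convert the orthogonality relations it forces into the lattice tiling criterion \eqref{lattice-supports} established earlier. Concretely, let $\Lambda$ be a spectrum of $E$. By Theorem~\ref{thm:periodicity-of-spectrum} the set $\Lambda$ is periodic and admits a positive integer period $t$, so that $\Lambda + t = \Lambda$.

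First I would observe that this period feeds an entire lattice of differences into $\Lambda - \Lambda$. Fixing any $\lambda_0 \in \Lambda$ (the spectrum is nonempty, being a basis of $L^2(E)$) we have $\lambda_0 + kt \in \Lambda$ for every $k \in \ZZ$, so each nonzero multiple $kt$ arises as a difference of two distinct points of $\Lambda$. Orthogonality of the corresponding exponentials, in the form \eqref{diff-in-zeros}, then yields $\ft{\one_E}(kt) = 0$ for all $k \in \ZZ \setminus \Set{0}$; equivalently $\ft{\one_E}$ vanishes on $(t\ZZ) \setminus \Set{0}$.

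Next I would apply the lattice equivalence \eqref{lattice-supports} with $f = \one_E$ and the one-dimensional lattice $T = \tfrac{1}{t}\ZZ$, whose dual lattice is $T^* = t\ZZ$. The vanishing just established is precisely the right-hand side of \eqref{lattice-supports}, so $\one_E * \delta_{\frac1t \ZZ}$ is a constant $c$, i.e.\ $\sum_{n\in\ZZ}\one_E(x - n/t) = c$ for almost every $x$. To pin down $c$ I would integrate this identity over one fundamental domain $[0, 1/t)$ of the lattice: the left side integrates to $\int_\RR \one_E = \vol(E) = 1$, while the right side integrates to $c/t$, forcing $c = t$. Thus $E$ tiles $\RR$ at the integer level $T := t$ when translated at the locations $T^{-1}\ZZ$, which is exactly the assertion of the corollary.

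The genuine analytic content here is carried entirely by Theorem~\ref{thm:periodicity-of-spectrum}; once integrality of the period is in hand, the remaining deduction is a routine application of one-dimensional Poisson summation (Fourier series) as encoded in \eqref{lattice-supports}. The only places asking for a little care are the standard density/level bookkeeping that identifies the tiling level with the period, and---if one declines the earlier pretense that \eqref{supports} is sufficient---the verification that for the honest lattice $\tfrac1t\ZZ$ the condition \eqref{lattice-supports} really is an equivalence, which it is by the elementary periodization argument following \eqref{periodization}.
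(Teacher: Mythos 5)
Your proposal is correct and takes essentially the same approach as the paper: an integer period $T$ of the spectrum from Theorem~\ref{thm:periodicity-of-spectrum}, orthogonality \eqref{diff-in-zeros} forcing $\ft{\one_E}(nT)=0$ for all nonzero $n\in\ZZ$, and then lattice tiling of $\RR$ at level $T$ by $T^{-1}\ZZ$. The only difference is that you carry out the final step explicitly via \eqref{lattice-supports} and an integration over a fundamental domain to identify the level, where the paper simply cites the literature --- a harmless amplification of the same argument.
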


\begin{proof}[Proof of Corollary \ref{cor:multiple-tiling}]
Let $\Lambda$ be a spectrum of $E$.
By Theorem \ref{thm:periodicity-of-spectrum} we know that $\Lambda$ is a periodic set and let $T$ be one of its periods:
$\Lambda+T=\Lambda$.
Then we have $\Lambda = T\ZZ + \Set{\ell_1,\ldots,\ell_T}$
(the number of elements in each period must be $T$ in order for $\Lambda$ to have
density 1, hence $T$ is an integer), and, by \eqref{diff-in-zeros} (orthogonality),
this implies that $\ft{\chi_E}(nT) = 0$ for all nonzero $n \in \ZZ$.
Hence $E$ tiles $\RR$ when translated at $T^{-1}\ZZ$ (see, e.g.\ \cite{kolountzakis1996structure})
at level $T$.
\end{proof}

Theorem \ref{thm:periodicity-of-spectrum} is not true in dimension higher than 1. For instance,
even when $E$ is as simple as a cube, it may have spectra that are not periodic as discussed in \S \ref{ss:spectra-of-cube}.

Since $E$ being spectral with spectrum $\Lambda$ is a tiling condition
$$
\Abs{\ft{\one_E}}^2 \text{  tiles when translated by } \Lambda \text{  at level } 1
$$
(assuming the measure of $E$ is 1)
one might expect that the structure theorems (see \S \ref{ss:structure}) about tilings of the real line with functions might apply. However all these theorems assume compact support for the tile (and there are non-periodic tilings when we give up compact support \cite{kolountzakis2016non}) and we do not have this here. Since $E$ is bounded, the Fourier Transform $\ft{\one_E}$ is analytic and hence its support is the entire real line. A different approach is needed.

The first observation, coming from orthogonality \eqref{diff-in-zeros}, is that for any two different points $\lambda, \mu \in \Lambda$ the difference $\Abs{\lambda-\mu}$ is bounded below by the smallest zero of $\ft{\one_E}$. The difference of successive elements of $\Lambda$ is also bounded above as we know \cite[Lemma 2.3]{kolountzakis1996structure} that $\Lambda$, as a tiling complement of an $L^1$ function, mush have density 1, hence it is impossible to have arbitrarily large gaps.

Writing
$$
\Lambda = \Set{ \cdots < \lambda_{-2} < \lambda_{-1} < \lambda_0 = 0 < \lambda_1 < \lambda_2 < \cdots}
$$
Since $\ft{\one_E}$ is analytic it has a finite number of roots in any interval, so we deduce that $\lambda_{n+1}-\lambda_n$ belongs to a finite set $\Sigma$ of positive real numbers, for all $n\in\ZZ$. From now on we view $\Lambda$ as a bi-infinite sequence of symbols from $\Sigma$ and thus as an element in the compact topological space $\Sigma^\ZZ$.

Since $\Lambda$ is a tiling complement of $\Abs{\ft{\one_E}}^2$ it follows from \eqref{supports} that
\begin{equation}\label{spectral-gap}
\supp \ft{\delta_\Lambda} \cap (0, a) = \emptyset
\end{equation}
for some $a > 0$.
Define next the subspace $X \subseteq \Sigma^\ZZ$ to consist of all sequences in $\Sigma^\ZZ$ which encode a set $\Lambda$ satisfying \eqref{spectral-gap}. Then $X$ is clearly a shift invariant set and it is not hard to see via a limiting argument that $X$ is closed in the topology of $\Sigma^\ZZ$.

The crucial lemma is:
\begin{lemma}\label{lm:half-lines}
The elements of the set $X$ are determined if we know their values on any left or right half-line.
\end{lemma}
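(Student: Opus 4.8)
The plan is to translate the symbolic statement into one about the underlying point sets and then reduce everything to a one-sided uniqueness principle for tempered distributions with a spectral gap. Suppose two elements of $X$ agree on a right half-line of symbols; since $X$ is shift invariant I may apply the shift and assume they agree on the indices $n\geq 0$. Let $\Lambda,\Lambda'\subseteq\RR$ be the sets they encode, reconstructed from their gap sequences with the normalization $\lambda_0=\lambda_0'=0$. Agreement of the symbols $g_n=\lambda_{n+1}-\lambda_n$ for $n\geq 0$ forces $\lambda_n=\lambda_n'$ for every $n\geq 0$; that is, $\Lambda$ and $\Lambda'$ coincide on the spatial half-line $[0,\infty)$. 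The left half-line case is identical after reflection. So it suffices to prove: if $\Lambda,\Lambda'$ are encoded by elements of $X$ and agree on $[0,\infty)$, then $\Lambda=\Lambda'$.

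Set $\nu=\delta_\Lambda-\delta_{\Lambda'}$. Because $\Lambda$ and $\Lambda'$ agree on $[0,\infty)$, the measure $\nu$ is supported in $(-\infty,0)$, and since both sets have bounded density $\nu$ is a tempered distribution. Membership of $\Lambda$ and $\Lambda'$ in $X$ means each of $\ft{\delta_\Lambda},\ft{\delta_{\Lambda'}}$ satisfies the spectral gap \eqref{spectral-gap} with the \emph{same} $a>0$ (the number $a$ depends only on $E$, through the first positive zero of $\Abs{\ft{\one_E}}^2$), so by subtraction $\supp\ft{\nu}\cap(0,a)=\emptyset$; that is, $\ft{\nu}$ vanishes as a distribution on the open interval $(0,a)$. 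I am thus reduced to the following assertion: a tempered distribution $\nu$ supported on the half-line $(-\infty,0)$ whose Fourier transform vanishes on a nonempty open interval must be $0$.

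This last claim is where I would use complex analysis. By the Paley--Wiener--Schwartz theorem for one-sided supports, $\ft{\nu}$ is the distributional boundary value, as $\tau\to 0^+$, of a function $g(\xi+i\tau)$ that is holomorphic in the upper half-plane $\Set{\Im z>0}$ and of tempered growth there; the half-plane is dictated by the support, since for $x<0$ the exponential $e^{-2\pi i(\xi+i\tau)x}=e^{-2\pi i\xi x}e^{2\pi\tau x}$ decays precisely when $\tau>0$. Because the distributional boundary value of $g$ vanishes on $(0,a)$, a boundary-uniqueness (Schwarz reflection) argument shows that $g$ continues holomorphically across $(0,a)$ and equals $0$ there; as $(0,a)$ has accumulation points, the identity theorem forces $g\equiv 0$ on the connected upper half-plane, whence $\ft{\nu}=0$ and $\nu=0$. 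This gives $\Lambda=\Lambda'$ and proves the lemma.

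The main obstacle is precisely this last step: making the passage from "one-sided support plus a spectral gap" to "$\nu=0$" rigorous at the level of distributions rather than functions. Two points need care. First, one must verify that $\nu$, an infinite signed sum of point masses, is genuinely a tempered distribution whose one-sided Fourier--Laplace transform is holomorphic and of tempered growth in a half-plane; this is exactly where the bounded density of $\Lambda$ and $\Lambda'$ is used. Second, one must invoke the correct boundary-uniqueness principle, namely that a function holomorphic in a half-plane whose distributional boundary value is zero on an open real interval extends holomorphically across that interval. Granting these standard facts, the argument is short, and its content is simply that one-sidedness (holomorphy in a half-plane) and a spectral gap (vanishing on an interval) cannot coexist for a nonzero object.
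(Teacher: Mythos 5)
Your proof is correct, and its first half coincides with the paper's: using shift invariance to normalize, both arguments reduce the lemma to the claim that if two sets encoded by elements of $X$ agree on a spatial half-line, then the difference $\mu$ of their Dirac combs --- a tempered measure supported on a half-line whose Fourier transform vanishes on the gap interval $(0,a)$ --- must be zero. Where you diverge is the analytic engine that kills $\mu$. The paper first multiplies $\mu$ by $\ft{\psi}$ with $\psi \in C^\infty(-a/10,a/10)$: this produces a \emph{totally bounded} measure $\nu = \ft{\psi}\cdot\mu$, still supported on the half-line (multiplication cannot enlarge the support) and still having a spectral gap, now $(a/10,9a/10)$, since $\ft{\nu} = \psi * \ft{\mu}$. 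It then invokes the F. and M. Riesz theorem: the Fourier transform of a finite measure carried by a half-line is mutually absolutely continuous with Lebesgue measure, hence cannot vanish on an interval unless $\nu=0$; letting $\psi$ vary then gives $\mu = 0$. You instead work directly at the level of tempered distributions: Paley--Wiener--Schwartz realizes $\ft{\mu}$ as the distributional boundary value of a function holomorphic with tempered growth in a half-plane, and the edge-of-the-wedge theorem (gluing with the zero function from the opposite half-plane across $(0,a)$) plus the identity theorem forces this function, hence $\mu$, to vanish. Both implementations rest on the same underlying principle --- one-sided support means half-plane analyticity, which is incompatible with a spectral gap --- but they buy different things: the paper's truncation trick reduces everything to a classical theorem about finite measures and sidesteps distributional boundary values entirely (and in fact yields the stronger conclusion that the transform cannot vanish even on a set of positive measure), while your route avoids the truncation step at the cost of the heavier, though standard, machinery of Fourier--Laplace transforms of one-sidedly supported tempered distributions and distributional boundary uniqueness --- precisely the two points you correctly flag as the ones requiring care.
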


That is, if $\Set{x_n}_{n \in \ZZ} \in X$ is such a sequence and we know the right half-sequence $\Set{x_n}_{n\ge a}$ or the left half-sequence $\Set{x_n}_{n\le a}$, for any $a \in \ZZ$, then $\Set{x_n}_{n \in \ZZ}$ is completely determined.

\begin{proof}
Suppose that $X$ is not determined by left half-lines (the argument is similar for right half-lines).
Then there are distinct $\Lambda^1, \Lambda^2 \in X$ such that
$\Lambda^1_i = \Lambda^2_i$ for all negative integers $i$.
Both $\delta_{\Lambda^1}$ and $\delta_{\Lambda^2}$ have a spectral gap at $(0,a)$ and therefore
so does their difference
$$
\mu = \delta_{\Lambda^1} - \delta_{\Lambda^2}.
$$
Notice that $\mu$ is supported in the half-line $[0, +\infty)$.
Suppose $\psi \in C^\infty(-a/10, a/10)$. It follows from the rapid decay of $\ft{\psi}$
that the measure
$$
\nu = \ft{\psi}\cdot \mu
$$
is totally bounded and
still has a spectral gap at the interval $(a/10, 9a/10)$.
But the measure $\nu$ is also supported in the half-line $[0, +\infty)$
and by the F. and M. Riesz Theorem \cite{havin1994uncertainty}
its Fourier Transform is mutually absolutely continuous with
respect to the Lebesgue measure on the line.
But this is incompatible with the vanishing
of $\ft{\nu}$ in some interval. Therefore $\nu$ must be identically $0$ and,
since $\psi \in C^\infty(-a/10, a/10)$, is otherwise arbitrary, it follows that $\mu\equiv 0 $, or
$\Lambda^1 = \Lambda^2$, a contradiction. It follows that $X$ is indeed determined by left half-lines.
\end{proof}

We also observe that determination from all half-lines implies determination by a window of some finite length.

\begin{lemma}\label{lm:diag}
Suppose $X \subseteq \Sigma^\ZZ$ is a closed, shift-invariant set which is determined by left half-lines
and by right half-lines.
Then there is a finite number $w$ such that $X$ is determined by windows of size $w$.
\end{lemma}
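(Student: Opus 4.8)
The plan is to argue by contradiction, using the compactness of $\Sigma^\ZZ$ together with the shift-invariance of $X$. Recall that $\Sigma$ is finite, so $\Sigma^\ZZ$ is compact and metrizable in the product topology, convergence is coordinatewise, and $X$, being closed, is itself compact; hence so is $X \times X$. To say that $X$ is \emph{not} determined by any finite window means precisely that for every $n \in \NN$ there exist distinct $x^{(n)}, y^{(n)} \in X$ agreeing on some interval of coordinates of length $n$. From these I will manufacture a single pair of distinct elements of $X$ agreeing on an entire half-line, which will contradict the hypothesis.

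First I would normalize the location of the disagreement. For each $n$, let $I_n$ be an interval of length $n$ on which $x^{(n)}$ and $y^{(n)}$ agree; since the two sequences are distinct they must differ at some coordinate outside $I_n$, either to the right of $I_n$ or to its left. Passing to a subsequence, I may assume the disagreement always lies to the right (the left case is symmetric). Let $r_n$ be the smallest coordinate to the right of $I_n$ at which the sequences differ; then they agree on an interval of length at least $n$ ending at $r_n - 1$ and differ at $r_n$. Using shift-invariance, translate both sequences by $-r_n$ to obtain $\tilde x^{(n)}, \tilde y^{(n)} \in X$ which differ at coordinate $0$ and agree on $\{-n, \ldots, -1\}$.

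Next I would extract a convergent subsequence $(\tilde x^{(n)}, \tilde y^{(n)}) \to (x, y) \in X \times X$, which exists by compactness. Because convergence is coordinatewise and $\Sigma$ is discrete, each coordinate stabilizes: for every fixed $j \le -1$, once $n \ge -j$ the agreement interval $\{-n, \ldots, -1\}$ contains $j$, so $\tilde x^{(n)}_j = \tilde y^{(n)}_j$ and in the limit $x_j = y_j$; thus $x$ and $y$ coincide on the left half-line $\{j \le -1\}$. On the other hand, stabilization at coordinate $0$ gives $x_0 = \tilde x^{(n)}_0$ and $y_0 = \tilde y^{(n)}_0$ for all large $n$, and since $\tilde x^{(n)}_0 \neq \tilde y^{(n)}_0$ we conclude $x_0 \neq y_0$, so $x \neq y$. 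Hence $x$ and $y$ are distinct elements of $X$ agreeing on a left half-line, contradicting that $X$ is determined by left half-lines; in the symmetric case one contradicts determination by right half-lines instead. This contradiction shows such a finite $w$ must exist.

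The one delicate point — really the crux — is guaranteeing that the limiting pair stays distinct rather than collapsing to a single sequence. This is exactly what shift-invariance buys: by anchoring the point of disagreement at the origin we prevent it from escaping to infinity, and the finiteness (hence discreteness) of $\Sigma$ forces the two disagreeing symbols at the origin to persist in the limit. Everything else is routine compactness bookkeeping, and the number $w$ produced is simply the smallest window length admitting no such colliding pair.
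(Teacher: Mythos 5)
Your proof is correct and follows essentially the same route as the paper's: a contradiction argument that uses shift-invariance to anchor the first disagreement at the origin, compactness of $\Sigma^\ZZ$ to extract a limiting pair, and discreteness of $\Sigma$ to see that the limit pair agrees on a half-line yet differs at $0$, contradicting half-line determination. The only organizational difference is that the paper first reduces to a one-step propagation claim (agreement on a window forces agreement at the next index to the right), so it only needs disagreements on the right, whereas you treat the two sides symmetrically by passing to a subsequence; both versions are sound.
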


\begin{proof}
It is enough to show that there is a finite window size $w$ such that whenever two elements of $X$
agree on a window of size $w$ then they necessarily agree at the first index to the right of that window.
For in that case they necessarily agree at the entire right half-line to the right of the window and are
by assumption equal elements of $X$.

Assume this is not true. Then there are elements $x^n, y^n$ of $X$, $n=1,2,\ldots$,
which agree at some window of width $n$
but disagree at the first location to the right of that window. Using the shift invariance of $X$
we may assume that
$$
x^n_{-n}=y^n_{-n},\ x^n_{-n+1} = y^n_{-n+1},\ \ldots,\ x^n_{-1} = y^n_{-1}\ \ \&\ \ x^n_0 \neq y^n_0.
$$

By the compactness of the space there are $x, y \in X$ and a subsequence of $n$'s such that
$x^n \to x$ and $y^n \to y$.
By the meaning of convergence in the space $\Sigma^\ZZ$ we have that the sequences $x$ and $y$
agree for all negative indices and disagree at $0$.
This contradicts the assumption that $X$ is determined by left half-lines.
\end{proof}

We can now prove the periodicity of $x_n = \lambda_{n+1}-\lambda_n$ (which implies that $\Lambda$ is periodic). From Lemma \ref{lm:diag} $x_n$ is determined by windows of width $w$. But each such window can only be ``colored'' in finitely many ways using $\Sigma$ as the set of colors, so there are two windows with the same contents. This implies that the difference of their starting points, call it $t$, is a period of $x_n$. This number $t$ is necessarily an integer, since the density of $\Lambda$ is equal to 1 and the density of a periodic set is the number of points per period divided by the period length $t$. Since the number of points per period is integral so must be $t$.

The following is a major open question about one-dimensional spectra.

\qq{
Is it true that for a bounded spectral set $E \subseteq \RR$ its spectrum $\Lambda$ (which we may assume contains 0) is rational?

Perhaps every spectral $E$ has \textit{some} rational spectrum?

In a more basic form, we may ask if 
the spectrum of any finite set of integers (a subset of $\TT$ containing 0) is necessarily rational.
}

To the best of my knowledge a recently published proof \cite{zhou2024rationality} of the rationality of the spectrum is unfortunately incomplete.

\subsection{The product question}\label{ss:product}

Suppose we have two groups $G_1$ and $G_2$ (think of finite groups for simplicity) and two sets $A \subseteq G_1$ and $B \subseteq G_2$. It is easy to see that if $A$ tiles $G_1$ with tiling complement $T_1 \subseteq G_1$ and $B$ tiles $G_2$ with tiling complement $T_2 \subseteq G_2$ then the product $A \times B$ tiles $G_1 \times G_2$ with tiling complement $T_1 \times T_2$.

The converse is also true though not completely as obvious. If we assume that $A \times B$ tiles $G_1 \times G_2$ it follows that $A$ tiles $G_1$ and $B$ tiles $G_2$. The reason is that the intersection of any translate of $A \times B$ with the subgroup $G_1 \times \Set{0}$ is a translate of the set $A \times \Set{0}$. Thus we have $G_1 \times \Set{0}$ being tiled by copies of $A \times \Set{0}$ which is of course the same as saying that $A$ tiles $G_1$. Similarly $B$ has to tile $G_2$.

What if $A$ is spectral in $G_1$ and $B$ is spectral in $G_2$? Can we conclude that $A\times B$ is spectral in $G_1 \times G_2$? The answer is an easy yes: if $\Lambda_1 \subseteq \ft{G_1}$ is a spectrum of $A$ and $\Lambda_2 \subseteq \ft{G_2}$ is a spectrum of $B$ then $\Lambda_1 \times \Lambda_2 \subseteq \ft{G_1} \times \ft{G_2} = \ft{G_1 \times G_2}$ is a spectrum of $A \times B$.

But we do not know if the reverse implication is true.

\qq{\label{q:product}
If $A\subseteq G_1$, $B \subseteq G_2$ and $A \times B \subseteq G_1 \times G_2$ is spectral, does it follow that $A$ is spectral in $G_1$ and $B$ is spectral in $G_2$?}

Here are some partial results.
It was proved in \cite{greenfeld2016spectrality} that the answer is yes if we know that $A \subseteq \RR$ is an interval and $B \subseteq \RR^{d-1}$. This was followed by \cite{kolountzakis2016cylinders} where, in the same groups, $A$ was allowed to be a union of two intervals. It was shown then in \cite{greenfeld2020spectrality} that even if $A \subseteq \RR^2$ is a convex polygon the answer is still yes. Finally, it was proved in \cite{kolountzakis2023spectral} that if $A \subseteq \RR^m$ is a convex body and $B \subseteq \RR^n$ is bounded and $A \times B$ is spectral in $\RR^{m+n}$ then $A$ is also spectral in $\RR^m$ (but no claim is made about the spectrality of $B$).

If the answer to Question \ref{q:product} is negative this would obviously break the symmetry between tiles and spectra and one expects to be able to disprove the Fuglede Conjecture. The smallest case where this would be interesting is when $A$ and $B$ are both in $\RR$. What conclusions can we draw then if $A\times B$ is spectral in $\RR^2$ but $A$, say, is not spectral? The answer is that the Fuglede Conjecture cannot be valid in both dimensions 1 and 2. More precisely, if ``spectral $\Longrightarrow$ tiling'' were valid in dimension 2 then $A \times B$ would be a tile and therefore $A$ and $B$ would be tiles and if ``tiling $\Longrightarrow$ spectral'' were true in dimension 1  then both $A$ and $B$ would be spectral. 

We conclude that a negative answer to Question \ref{q:product} implies that either ``spectral $\Longrightarrow$ tiling'' fails in dimension 2 or ``tiling $\Longrightarrow$ spectral'' fails in dimension 1.

{\bf Note added in revision:} After this paper was written G\'abor Somlai \cite{somlai2024spectralityproductdoesimply} announced a very clever, simple example of a spectral set $A \times B \subseteq G \times G$, with $A, B \subseteq G$ so that $A$ is not spectral in $G$, thus answering Question \ref{q:product} in the negative. In this example the group $G$ is a product of several cyclic groups so this example does not disprove the Fuglede conjecture in dimension 1 or 2 as discussed in the previous paragraph. It is still open whether the spectrality of $A \times B$ implies that of $A$ \textit{or} $B$.

\section{Counterexamples to the Fuglede Conjecture}


\subsection{Spectrality in groups}\label{ss:spectrality-in-groups}

The Spectrality question can be phrased in any locally compact abelian group. Suppose $G$ is such a group and $\ft{G}$ its dual group, that is the group consisting of all continuous characters on $G$ (group homomorphisms into the multiplicative group $\CC\setminus\Set{0}$; see, for example, \cite{rudin1962groups}). Notice that we write all abelian groups additively (the only exception being the group $\Set{z \in \CC: \Abs{z} = 1}$.)

Suppose $E \subseteq G$ is a subset of $G$ with finite and non-zero Haar measure. We say that $E$ tiles $G$ with tiling complement the set $T \subseteq G$ if $\one_E * \delta_T = 1$ almost everywhere (with respect to the Haar measure) on $G$. Correspondingly, we say that $E$ is spectral and has spectrum the set $\Lambda \subseteq \ft{G}$ if the set of characters in $\Lambda$ form an orthogonal basis for the Hilbert space $L^2(E)$.

We mostly care about the ``classical'' groups $\RR^d$, $\ZZ^d$, $\TT^d$ and finite groups, and direct products that can be formed among them. For instance, a finite subset $E \subseteq \ZZ$ is spectral if we can find a finite collection of characters $e_\lambda(x) = e^{2\pi i \lambda x}$, with $\lambda \in \TT$, which forms an orthogonal basis on $L^2(E)$ (which is finite dimensional, so our question here is really a question of linear algebra). Another example is the finite group $\ZZ_N = \ZZ/(N\,\ZZ)$ whose dual group is isomorphic to itself: $\ft{\ZZ_N} \simeq \ZZ_N$. When $E \subseteq \ZZ_N$ is finite for $E$ to be spectral means to be able to find $\Lambda \subseteq \ZZ_N$ such that the characters $e_\lambda(x) = e^{2\pi i \lambda x/N}$, $\lambda \in \Lambda$, are orthogonal on $E$ and $\Abs{\Lambda} = \Abs{E}$ (this last requirement forces the completeness).

The characters $\lambda, \mu \subseteq \ft{G}$ (often we identify $\lambda$ with $e^{2\pi i \lambda x}$ with $\lambda x$ in the exponent being shorthand for $\lambda(x)$) are orthogonal on $E \subseteq G$ if and only if
$$
\one_E(\lambda-\mu) = 0.
$$
Here the Fourier Transform of $f:G\to\CC$ is the function $\ft{G}\to\CC$ given by $\ft{f}(\gamma) = \int_G f(x) \overline{\gamma(x)}\,dx$, where the integration is carried out with respect to the Haar measure on $G$. Haar measure can be normalized differently but we always choose the normalization on $\ft{G}$ that makes the following formula for Fourier inversion true:
$$
f(x) = \int_{\ft{G}} \ft{f}(\gamma) \gamma(x) \, d\gamma.
$$
For example, when $G = \ZZ_N$ and Haar measure on $G$ is the counting measure then the Haar measure on $\ft{G} \simeq \ZZ_N$ is counting measure divided by $N$, so that the formulas for the Fourier Transform and Fourier inversion are
$$
\ft{f}(\nu) = \sum_{x \in \ZZ_N} f(x) e^{-2\pi i \nu x}
\text{ and }
f(x) = \frac{1}{N} \sum_{\nu \in \ZZ_N} \ft{f}(\nu) e^{2\pi i \nu x}.
$$
By a similar argument as the one that led to \eqref{spectrum-as-tiling} we are again led to $\Lambda \subseteq \ft{G}$ being a spectrum for $E \subseteq G$ if and only if we have the tiling on $\ft{G}$
\begin{equation}\label{spectrum-as-tiling-in-groups}
\Abs{\ft{\one_E}}^2 * \one_\Lambda = \Abs{E}^2,
\end{equation}
where $\Abs{E}$ is the Haar measure of $E$.

As a warmup for what is to follow let us prove the Fuglede Conjecture in the case the group is $\ZZ_p$, with $p$ a prime, a rather easy case.

Suppose $E \subseteq \ZZ_p$ is a tile. Then obviously $\Abs{E}$ divides the size of the group, so $\Abs{E}=1$ or $\Abs{E}=p$. In other words $E$ is the whole space or just one point. In both cases it is spectral: when it is one point just choose any character as the spectrum and when it is the whole space choose all the characters as the spectrum (the characters of $G$ are always orthogonal on $G$). So we have proved the tiling $\Longrightarrow$ spectral direction of the conjecture.

For the other direction we shall need the following well known result.
\begin{theorem}\label{thm:sums-of-prime-roots-of-unity}
Suppose $p$ is a prime and $\emptyset \neq A \subseteq \Set{0, 1, 2, \ldots, p-1}$ is such that
$$
\sum_{a \in A} e^{2\pi i a/p} = 0.
$$
Then $A = \Set{0, 1, 2, \ldots, p-1}$.

In other words if the sum of some distinct $p$-th roots of unity vanishes then these must be all $p$-th roots of unity.
\end{theorem}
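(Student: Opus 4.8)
The plan is to read the hypothesis as a polynomial vanishing condition at a primitive $p$-th root of unity and then exploit the irreducibility of the cyclotomic polynomial. Writing $\zeta = e^{2\pi i/p}$, I would form the polynomial
$$
P(x) = \sum_{a \in A} x^a,
$$
which has all coefficients in $\Set{0,1}$ (it is the indicator of $A$) and degree at most $p-1$. The hypothesis says exactly that $P(\zeta) = 0$, so $\zeta$ is a root of $P$, and since $A \neq \emptyset$ we have $P \neq 0$.

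The key fact I would invoke is that the minimal polynomial of $\zeta$ over $\QQ$ is the $p$-th cyclotomic polynomial $\Phi_p(x) = 1 + x + \cdots + x^{p-1}$. This is where the primality of $p$ enters decisively: $\Phi_p$ is irreducible over $\QQ$. One sees this through the substitution $x \mapsto x+1$, which sends $\Phi_p$ to $\sum_{j=0}^{p-1} \binom{p}{j+1} x^j$, a polynomial with constant term $\binom{p}{1} = p$, leading coefficient $1$, and all intermediate coefficients $\binom{p}{j+1}$ ($1 \le j \le p-2$) divisible by $p$; since $p \nmid p^2$ for the constant term, Eisenstein's criterion at the prime $p$ applies.

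Granting irreducibility, the rest is a divisibility-and-degree count. Because $\Phi_p$ is the minimal polynomial of $\zeta$ and $P(\zeta) = 0$, we get $\Phi_p \mid P$ in $\QQ[x]$. A nonzero multiple of $\Phi_p$ has degree at least $p-1$, so $\deg P \ge p-1$; combined with $\deg P \le p-1$ this forces $P = c\,\Phi_p$ for some constant $c \in \QQ$. I would finish by comparing coefficients: every coefficient of $\Phi_p$ equals $1$, while every coefficient of $P$ lies in $\Set{0,1}$ and at least one is nonzero, so $c = 1$ and hence $P = \Phi_p$. This means $A = \Set{0,1,\ldots,p-1}$, as claimed.

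The only genuine content, and thus the main obstacle, is the irreducibility of $\Phi_p$; once that is in hand the conclusion follows immediately, and the $0/1$ coefficient bookkeeping together with the degree comparison is entirely routine.
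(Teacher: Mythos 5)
Your proof is correct and takes essentially the same route as the paper: both recognize $\Phi_p(x) = 1 + x + \cdots + x^{p-1}$ as the minimal polynomial of $e^{2\pi i/p}$, deduce that it divides $P(x) = \sum_{a \in A} x^a$, and conclude by the degree bound $\deg P \le p-1$ together with coefficient comparison; your only addition is the Eisenstein proof of irreducibility, which the paper simply cites as well known. One small slip in that added part: the Eisenstein condition on the constant term is $p^2 \nmid p$ (the constant term $p$ is not divisible by the square of the prime), not ``$p \nmid p^2$''.
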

\begin{proof}
The minimal polynomial of $e^{2\pi i /p}$ over $\QQ$ is well known to be the cyclotomic polynomial $\Phi_p(x) = 1+x+x^2+\cdots+x^{p-1}$ so this polynomial must divide the polynomial $q(x) = \sum_{a \in A} x^a$ since it vanishes on $e^{2\pi i /p}$ by our assumption. But the degree of $q(x)$ is at most $p-1$, so it must be equal to $q-1$ and we have $q(x) = \Phi_p(x)$.
\end{proof}

\begin{corollary}\label{cor:fourier-zeros-in-zp}
If $\emptyset \neq E \subseteq \ZZ_p$ is not the whole group then $\ft{\one_E}$ has no-zeros on $\ZZ_p$.
\end{corollary}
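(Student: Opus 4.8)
The plan is to write out the Fourier transform explicitly as the exponential sum
$\ft{\one_E}(\nu) = \sum_{a \in E} e^{-2\pi i \nu a/p}$ and to check directly that it cannot vanish. I would split into two cases according to whether the frequency $\nu \in \ZZ_p$ is zero or not. At $\nu = 0$ the sum is simply $\Abs{E}$, a nonzero integer because $E \neq \emptyset$; so all the real content lies in the nonzero frequencies.

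Fix $\nu \in \ZZ_p \setminus \Set{0}$. The key observation is that, since $p$ is prime, multiplication by $\nu$ is a bijection of $\ZZ_p$ onto itself, so the map $a \mapsto (-\nu a) \bmod p$ carries $E$ bijectively onto a set $A \subseteq \Set{0, 1, \ldots, p-1}$ with $\Abs{A} = \Abs{E}$. Under this relabeling $\ft{\one_E}(\nu) = \sum_{b \in A} e^{2\pi i b/p}$ is exactly a sum of $\Abs{E}$ distinct $p$-th roots of unity. Theorem \ref{thm:sums-of-prime-roots-of-unity} then applies verbatim: if this sum were $0$ we would be forced to have $A = \Set{0, 1, \ldots, p-1}$, that is $\Abs{E} = p$ and $E = \ZZ_p$. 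Since by hypothesis $E$ is a proper subset of $\ZZ_p$, the sum cannot vanish, and combining the two cases yields $\ft{\one_E}(\nu) \neq 0$ for every $\nu \in \ZZ_p$.

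The argument has essentially no obstacle once Theorem \ref{thm:sums-of-prime-roots-of-unity} is granted; the only point I would be careful not to gloss over is the use of primality, which enters in exactly one place, namely guaranteeing that $a \mapsto \nu a$ is a permutation of $\ZZ_p$ so that the terms of $\ft{\one_E}(\nu)$ really are $\Abs{E}$ distinct residues rather than a sum with repetitions. Over a composite $\ZZ_N$ this step breaks down, since a nonzero $\nu$ sharing a factor with $N$ collapses residues and the reduction to a sum of distinct $N$-th roots of unity is lost. This is precisely why the corollary, and indeed the full Fuglede Conjecture, becomes far more delicate in the non-prime case.
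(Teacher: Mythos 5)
Your proof is correct and follows essentially the same route as the paper: reduce $\ft{\one_E}(\nu)$ for $\nu \neq 0$ to a sum of distinct $p$-th roots of unity and invoke Theorem \ref{thm:sums-of-prime-roots-of-unity} to force $E = \ZZ_p$. In fact you are more explicit than the paper's one-line argument, which buries both the trivial $\nu = 0$ case and the primality-based distinctness of the terms in a parenthetical remark.
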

\begin{proof}
We have $\ft{\one_E}(\nu) = \sum_{e \in E} e^{-2\pi i e \nu}$ is a sum of $p$-th roots of unity (they are all different) so they must be all $p$-th roots of unity by Theorem \ref{thm:sums-of-prime-roots-of-unity}.
\end{proof}

Suppose now that $\emptyset \neq E \subseteq G$ is spectral with spectrum $\Lambda \subseteq \ZZ_p$. Since we must have $\ft{\one_E}(\lambda-\mu)=0$ for all $\lambda \neq \mu \in \Lambda$ it follows from Corollary \ref{cor:fourier-zeros-in-zp} that $E$ is the whole group or that we cannot find two different points in $\Lambda$, so that $E$ is either the whole group or a single point. In both cases it tiles.

\subsection{Failure of the ``spectral $\Longrightarrow$ tiling'' direction}\label{ss:st-fails}

In 2003 T. Tao \cite{tao2004fuglede} surprised the community of researchers working on the Fuglede Conjecture by disproving the ``spectral $\Longrightarrow$ tiling'' with a very easy argument. Until that point all the partial results regarding the Fuglede Conjecture were in the direction of supporting it.

Tao first disproved the conjecture in a finite group then lifted the example to the group $\RR^5$. In \cite{matolcsi2005fuglede4dim,kolountzakis2006hadamard} the dimension $d$ was reduced to 3 (where it still stands).

Let us start with a slightly weaker result (because the number of factors in the group, 12, is larger than the promised 5).
\begin{theorem}\label{thm:st-fails-tao}
The ``spectral $\Longrightarrow$ tiling'' direction of the Fuglede Conjecture fails in the group $\ZZ_2^{12}$.
\end{theorem}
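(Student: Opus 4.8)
The plan is to exhibit a single set $E \subseteq \ZZ_2^{12}$ that is spectral yet cannot tile the group, the non-tiling being forced by an elementary divisibility obstruction. Recall that the characters of $\ZZ_2^{12}$ are $\chi_\xi(x) = (-1)^{\xi\cdot x}$ for $\xi, x \in \ZZ_2^{12}$, where $\xi\cdot x = \sum_{i=1}^{12}\xi_i x_i$ is the dot product read modulo $2$. Since we will take $\Abs{E}=12$, spectrality is governed purely by orthogonality in the $12$-dimensional space $L^2(E)$, so I would reduce the entire problem to producing a $12\times 12$ Hadamard matrix realized through this bilinear pairing.

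First I would fix any real Hadamard matrix $H$ of order $12$ (one exists, e.g.\ by Paley's type~I construction since $11\equiv 3 \pmod 4$ is prime) and write $H_{jk} = (-1)^{a_{jk}}$ with $a_{jk}\in\Set{0,1}$, producing a binary matrix $A = (a_{jk})$. The crucial observation is that, over the two-element field $\ZZ_2$, one has $\mathrm{rank}(A)\le 12$, so $A$ admits a factorization $A = X\Lambda^\top$ with $X,\Lambda \in \ZZ_2^{12\times 12}$ (take a rank factorization and append zero columns). Letting $x_1,\ldots,x_{12}$ be the rows of $X$ and $\lambda_1,\ldots,\lambda_{12}$ the rows of $\Lambda$, we obtain $\lambda_k\cdot x_j = a_{jk}$, hence $(-1)^{\lambda_k\cdot x_j} = H_{jk}$. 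Because distinct rows (resp.\ columns) of a Hadamard matrix are orthogonal, and therefore unequal, the $x_j$ are pairwise distinct and so are the $\lambda_k$; thus $E=\Set{x_1,\ldots,x_{12}}$ and $\Lambda=\Set{\lambda_1,\ldots,\lambda_{12}}$ each have exactly $12$ elements.

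Next I would verify that $\Lambda$ is a spectrum of $E$. For $k\ne k'$, using $(-1)^{(\lambda_k+\lambda_{k'})\cdot x_j} = H_{jk}H_{jk'}$ and summing over $E$,
\begin{equation*}
\inner{\chi_{\lambda_k}}{\chi_{\lambda_{k'}}}_{L^2(E)} = \sum_{j=1}^{12} (-1)^{(\lambda_k-\lambda_{k'})\cdot x_j} = \sum_{j=1}^{12} H_{jk}H_{jk'} = (H^\top H)_{kk'} = 12\,\delta_{kk'},
\end{equation*}
which vanishes precisely because $H$ is Hadamard. So the $12$ characters $\chi_{\lambda_k}$ are pairwise orthogonal, hence linearly independent, in $L^2(E)$; being $12$ of them they form an orthogonal basis, and $E$ is spectral.

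Finally, the non-tiling is immediate: if $E$ tiled $\ZZ_2^{12}$ with complement $T$, then summing $\one_E*\one_T = 1$ over the group gives $\Abs{E}\cdot\Abs{T} = 2^{12}$, so $\Abs{E}=12$ would divide $2^{12}=4096$; but $12 = 2^2\cdot 3$ and $3\nmid 4096$, a contradiction. Hence $E$ is spectral yet does not tile, refuting ``spectral $\Longrightarrow$ tiling'' in $\ZZ_2^{12}$. The one step that needs care — and is really the heart of the matter — is the realization of the Hadamard sign pattern as $(-1)^{\lambda\cdot x}$: this is exactly what the $\ZZ_2$-rank bound $\mathrm{rank}(A)\le 12$ guarantees, and it explains why the ambient group is taken to be $\ZZ_2^{12}$, namely a Hadamard order ($12$) that is not a power of $2$ yet still fits inside $12$ binary dimensions. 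Everything else is bookkeeping.
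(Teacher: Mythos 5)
Your proof is correct, and it rests on exactly the two pillars of the paper's own argument: a $12\times 12$ Hadamard matrix to produce the orthogonality, and the divisibility obstruction $12 \nmid 2^{12}$ to rule out tiling. The only genuine difference is how the Hadamard sign pattern gets realized by characters. The paper takes $E = \Set{e_1,\ldots,e_{12}}$ to be the standard basis vectors, for which $\gamma\cdot e_j = \gamma_j$, so \emph{every} $\pm 1$ pattern on $E$ is the restriction of some character; the columns of the binarized Hadamard matrix can then be read off directly as the spectrum, with nothing to prove. Your factorization $A = X\Lambda^\top$ over $\ZZ_2$ achieves the same realization in greater generality, but the generality buys nothing: the trivial factorization $X = I$, $\Lambda = A^\top$ is precisely the paper's construction ($E$ the axis vectors, spectrum the columns of $A$), while a rank factorization with zero padding produces some other, less explicit spectral set satisfying the same two properties. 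So the step you call the heart of the matter can be short-circuited entirely by the right choice of $E$. Two minor points: your distinctness claim is correct but deserves the one-line justification that $x_j = x_{j'}$ would force $a_{jk} = \lambda_k\cdot x_j = \lambda_k\cdot x_{j'} = a_{j'k}$ for all $k$, contradicting distinctness of the rows of $H$; and your observation that $12$ is a Hadamard order that is not a power of $2$ is indeed the reason this group works, matching the paper's (uncited in your version, explicit in theirs) appeal to the existence of a $12\times 12$ Hadamard matrix.
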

\begin{proof}
Take $E = \Set{e_1, e_2, \dots, e_{12}}$ to consist of all axis vectors: $e_j$ is all zeros except at the $j$-th position where it has a 1. Then $\Abs{E}=12$ which does not divide the size of the group $\Abs{\ZZ_2^{12}} = 2^{12}$, so $E$ cannot tile the group. We show that $E$ is spectral.

As is the case with all finite groups we have $\ft{\ZZ_2^{12}} \simeq \ZZ_2^{12}$. The characters on $\ZZ_2^{12}$ are the functions
$$
e_\gamma(x) = (-1)^{\sum_{j=1}^{12} \gamma_j x_j} = \prod_{1}^{12} (-1)^{\gamma_j} (-1)^{x_j},
$$
where $\gamma \in \ZZ_2^{12}$. To have a spectrum of $E$ means that we have a collection of $\gamma$s such that the matrix
$$
M_{j, k} = (-1)^{\gamma_{j, k}} (-1)^{e_{j, k}},\ \ \ (j, k = 1, 2, \ldots, 12)
$$
has orthogonal rows. This is a matrix with entries $\pm 1$ and orthogonal matrices of this type are called Hadamard matrices. Such a $12 \times 12$ matrix does exist \cite{tao2004fuglede} (in general it is not known for which dimensions Hadamard matrices exist). This $12\times 12$ Hadamard matrix gives us the elements of the spectrum of $E$.
\end{proof}

Working similarly in the group $G = \ZZ_3^6$ Tao obtained a counterexample in this group. Using again $E = \Set{e_1, \ldots, e_6}$ (again, 6 does not divide the order of the group, which is $3^6$) a spectrum is found whenever we can find a $6 \times 6$ matrix $M$ with elements in $\Set{0, 1, 2}$ such that the matrix
$$
T_{j, k} = (e^{2\pi i /3})^{M_{j,k}}
$$
is orthogonal. Such a matrix was found by computation \cite{tao2004fuglede} and is
$$
M = \left(
\begin{array}{cccccc}
0&0&0&0&0&0\\
0&0&1&1&2&2\\
0&1&0&2&2&1\\ 
0&1&2&0&1&2\\
0&2&2&1&0&1\\ 
0&2&1&2&1&0
\end{array}
\right).
$$
To obtain a counterexample in the even smaller group $\ZZ_3^5$ we observe that $E$ is contained in a coset of the subgroup $\Set{x: \sum_{k=1}^6 x_j = 0} \simeq \ZZ_3^5$ in $\ZZ_3^6$. Translating it to 0 does not change its tiling or spectrality properties so we can take $E$ to be a subgroup of $\ZZ_3^5$ instead of $\ZZ_3^6$.

This last step demands some more explanation. Whenever we have a set $E \subseteq G$ and we study its tiling properties (``does it tile $G$?'') we can always work in the subgroup $G'$ generated by $E$. If $E$ tiles $G'$ then we can translate this tiling to every coset of $G'$ in $G$ and obtain a tiling of $G$. Conversely, if $E$ tiles $G$ then the translates of $E$ that participate in the tiling also tile $G'$ as it is impossible for a translate of $E$ to intersect $G'$ without being contained in it. Thus $E$ tiles $G'$ if and only if $E$ tiles $G$.

Regarding spectrality of $E \subseteq G' \subseteq G$, if $E$ is spectral in $G$ (using characters of $G$) it is also spectral in $G'$ as every character of $G$ is a character of $G'$ when restricted to $G'$. And if $E$ is spectral in $G'$ (using characters of $G'$) it is also spectral on $G$ as every character on $G'$ can be extended to a character of $G$.

Thus when thinking about tiling and spectrality we can always assume, if that suits us, that $E$ generates the group.

Back to Tao's example in $\ZZ_3^6$ now: properly translating $E$ can place it into a subgroup of $\ZZ_3^6$ which is isomorphic to $\ZZ_3^5$. Viewed in this group $E$ is spectral but not a tile, by the discussion above.

Having a counterexample $E$ to ``spectral $\Longrightarrow$ tiling'' in $\ZZ_3^5$ the next step is to lift it to a counterexample $E'$ in $\ZZ^5$. (This is described in \cite{tao2004fuglede}.) The set $E'$ then is lifted to a counterexample $E''$ in $\RR^5$ by just attaching a unit cube to each point in $E'$. This last step is very easy to verify that it preserves both the spectrality of $E'$ and its non-tiling character.

The dimension 5 was further reduced to 3 in \cite{matolcsi2005fuglede4dim,kolountzakis2006hadamard}. The main innovation there is that in place of the usual Hadamard matrices one considers orthogonal matrices whose entries are complex numbers of modulus one. In \cite{diaz2016fourier} useful parametrizations of such families are given and one of these was used to reduce the dimension to 3. The extension to $\ZZ^3$ and $\RR^3$ works in the same way as in \cite{tao2004fuglede}.

\subsection{Failure of the ``tiling $\Longrightarrow$ spectral'' direction}\label{ss:ts-fails}

In the disproof of the ``spectral $\Longrightarrow$ tiling'' direction in a finite group there was the great advantage of having an easy test for non-tiling: if the size of $E \subseteq G$ (assume $G$ is a finite group) does not divide the size of $G$ then $E$ does not tile $G$. Unfortunately no such easy criterion is known with which to disprove spectrality.

We will first disprove a stronger conjecture, the Universal Spectrum Conjecture of Lagarias and Wang \cite{lagarias1997spectral}.

\begin{quotation}{\bf Universal Spectrum Conjecture}
If $G$ finite and $E$ tiles $G$ then all its tiling complements
have a {\em common} spectrum.
\end{quotation}

Take $G = \ZZ_6^5$ and $E = \Set{0, e_1, e_2, \ldots, e_5} \subseteq G$. Define $v=(1, 2, 3, 4, 5)$ and the associated homomorphism $\phi:G \to \ZZ_6$ defined by
$$
\phi(x) = v \cdot x \bmod 6.
$$
Then $\phi$ is one to one on $E$ and $\phi(E) = \ZZ_6$ hence $E$ tiles $G$ with $T = \ker\phi$. We can do this with any permutation of the coordinates of $v$ and get several tiling complements of $E$ in $G$.

Assume now that $E$ has the set $L$ as a universal spectrum, i.e., $L$ is a spectrum for any tiling complement $T$ of $E$ in $G$. This implies that $\Abs{L}=6^4$. It also means that
$$
L - L \subseteq Z(\one_T) \cup {0}.
$$
By the definition of $T$ as the kernel of $\phi$ we have, with $\zeta_6 = e^{2\pi i /6}$,
$$
\ft{\one_T}(\lambda v) = \sum_{t\in T} \zeta_6^{\lambda v \cdot t} = \Abs{T},\ \ \ (\lambda \in \ZZ_6).
$$
With $\lambda=2$ this implies that $2v = (2, 4, 0, 2, 4) \notin L-L$ and, permuting the coordinates of $v$, we get that any vector which is a permutation of $(0, 2, 2, 4, 4)$ is not in $L-L$.

Define the matrix
\begin{equation}\label{K-matrix}
K = \left(
\begin{array}{cccccc}
0&0&2&2&4&4\\
0&2&0&4&4&2\\
0&2&4&0&2&4\\
0&4&4&2&0&2\\
0&4&2&4&2&0
\end{array}
\right)
\end{equation}
and observe that the differences of all its columns (call $K\subseteq\ZZ_6^5$ also the set of columns of the matrix $K$) are permutations of $(0, 2, 2, 4, 4)$, so that
$$
(K-K) \cap (L-L) = \Set{0}.
$$
Hence $K+L$ is a packing. But $\Abs{K}=6$ and $\Abs{L}=6^4$ so that this is actually a tiling of $G$. But this cannot be, since $K$ is contained in the subgroup of $\ZZ_6^5$ of vectors with even coordinates and $K$ cannot tile this subgroup as $\Abs{K}=6$ and the subgroup has size $3^5$.

We have thus disproved the Universal Spectrum Conjecture. Next we will prove that the ``tiling $\Longrightarrow$ spectral'' direction fails in the group
$$
G_2 = \ZZ_6^5 \times \ZZ_N,
$$
where $N$ is the number of tiling complements of $E$ in our disproof of the Universal Spectrum Conjecture above. Call these tiling complements of $E$ in $\ZZ_5^6$ by the names $T_0, T_1, \ldots, T_{N-1}$.

Denote by $\widetilde{x} = (x_1, \ldots, x_5, 0)$ the embedding $\ZZ_6^5 \to G_2$. Define the set
$$
\Gamma = \bigcup_{j=0}^{N-1} \left(\widetilde{T_j} + (0,0,\ldots,0,j)\right).
$$
(See Fig.\ \ref{fig:layers}.)
Obviously $\Gamma + \widetilde{E} = G_2$ is a tiling.

\begin{figure}[h]
\input 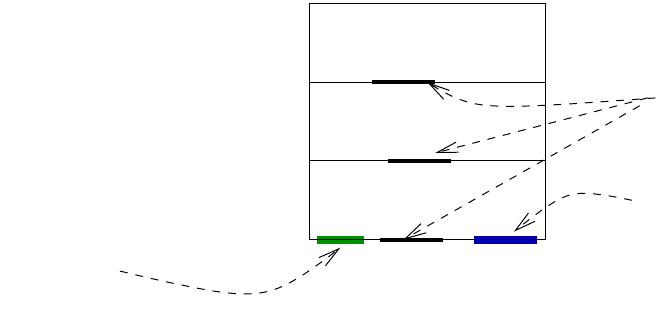_t
\caption{The construction of the set $\Gamma$ in $G_2$.
We move each tiling complement $T_j$ of $E$ in $\ZZ_6^5$ to a different coset of $\ZZ_6^5$ in $G_2$.
}\label{fig:layers}
\end{figure}

Assume that $S$ is a spectrum of $\Gamma$ in $G_2$ so that
$$
S-S \subseteq Z(\one_\Gamma) \cup \Set{0}.
$$
Let $k$ be a difference of two columns of $K$. Then
\begin{equation}\label{t0}
\ft{\one_\Gamma}\left(\widetilde{k}\right)  =
 \sum_{j=0}^{N-1} \left(\one_{\widetilde{T_j}+(0,0,\ldots,0,j)}\right)^\wedge \left(\widetilde{k}\right) > 0,
\end{equation}
since
\begin{itemize}
\item
The Fourier Transforms of the subgroups $\widetilde{T_j}$ are everywhere nonnegative (true for all subgroups).
\item 
The phase factor introduced to the Fourier Transform with the translation by $(0, \ldots, 0, j)$ has no effect as the last coordinate of $\widetilde{k}$ is 0.
\item 
For any such $k$, by the previous disproof of the Universal Spectrum Conjecture, for some $j=0, 1, \ldots, N-1$,
$$
\left(\one_{\widetilde{T_j}+(0,0,\ldots,0,j)}\right)^\wedge(\widetilde{k}) > 0.
$$
\end{itemize}
So $\widetilde{K}-\widetilde{K} \subseteq Z(\one_\Gamma)^c$, or
$(\widetilde{K}-\widetilde{K}) \cap (S-S) = \Set{0}$ and it follows that  $\widetilde{K}+S = G_2$ is a tiling since $\Abs{\widetilde{K}}=6, \Abs{S} = N\cdot 6^4$. But this is a contradiction since $\widetilde{K}$ is not a tile
(from the previous proof).

We have thus disproved the ``tiling $\Longrightarrow$ spectral'' conjecture for the group $G_2 = \ZZ_6^5 \times \ZZ_N$. Notice now that we can increase $N$ as much as we wish by repeating one of the $T_j$ sets more than once. The proof remains valid. Increasing thus $N$ to be coprime to 6 and remembering that the product group $\ZZ_a \times \ZZ_b$ is isomorphic to the cyclic group $\ZZ_{ab}$ if $a$ and $b$ are coprime, we see that we have an example in the group $\ZZ_6^4 \times \ZZ_{6N}$, which has only 5 factors, not 6 as before. This makes a difference in the final dimension we obtain for our example in $\ZZ^d$ and $\RR^d$.

\begin{theorem}\label{thm:lifting}
Any counterexample to the ``tiling $\Longrightarrow$ spectral'' direction of the Fuglede Conjecture in a product group $\ZZ_{n_1} \times \cdots \times \ZZ_{n_d}$ gives rise to a counterexample in $\ZZ^d$ and in $\RR^d$.
\end{theorem}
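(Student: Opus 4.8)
The plan is to factor the lift into two independent steps, $\ZZ_{n_1}\times\cdots\times\ZZ_{n_d}\to\ZZ^d\to\RR^d$, and to verify at each step that the two properties defining a counterexample — \emph{tiles} and \emph{is not spectral} — are separately preserved. Write $G=\ZZ_{n_1}\times\cdots\times\ZZ_{n_d}$, identify it with the quotient $\ZZ^d/L$ where $L=n_1\ZZ\times\cdots\times n_d\ZZ$, and let $D=\Set{0,1,\ldots,n_1-1}\times\cdots\times\Set{0,1,\ldots,n_d-1}$ be the fundamental domain of $L$. For a counterexample $E\subseteq G$ I choose the representatives $\widetilde E\subseteq D$, and in the second step set $E''=\widetilde E+[0,1)^d\subseteq\RR^d$, i.e.\ attach a unit cube to each point of $\widetilde E$.

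First I would handle tiling, the easy half. If $E$ tiles $G$ with complement $T$, lift $T$ to $\widetilde T\subseteq D$; then every $x\in\ZZ^d$ is uniquely $\widetilde e+\widetilde t+\ell$ with $\ell\in L$, so $\widetilde E$ tiles $\ZZ^d$ with the periodic complement $\widetilde T+L$. For the passage to $\RR^d$, note that $\one_{E''}=\one_{[0,1)^d}*\delta_{\widetilde E}$, so if $\widetilde E$ tiles $\ZZ^d$ with some $C\subseteq\ZZ^d$ then
$$
\one_{E''}*\delta_C=\one_{[0,1)^d}*\delta_{\widetilde E+C}=\one_{[0,1)^d}*\delta_{\ZZ^d}=1,
$$
because the unit cube tiles $\RR^d$ by $\ZZ^d$. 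Hence $E''$ tiles $\RR^d$.

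The substance is preserving non-spectrality, which I would carry out by contrapositive at each step: a spectrum of the larger object should produce a spectrum of the smaller one. For $E''=\widetilde E+[0,1)^d$ the natural factorization is
$$
\ft{\one_{E''}}(\xi)=\ft{\one_{[0,1)^d}}(\xi)\cdot P(\xi),\qquad P(\xi)=\sum_{p\in\widetilde E}e^{-2\pi i p\cdot\xi},
$$
where $P$ is $\ZZ^d$-periodic and equals the Fourier transform of $\one_{\widetilde E}$ on $\widehat{\ZZ^d}=\TT^d$. Since $\ZZ^d$ is a spectrum of the cube, the cube factor should ``integrate out'' along each coset $\theta+\ZZ^d$, and the aim is to convert a spectrum $\Lambda\subseteq\RR^d$ of $E''$ into a finite set in $\TT^d$ whose pairwise differences are zeros of $P$, i.e.\ a spectrum of $\widetilde E$ in $\ZZ^d$. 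For the first step one then restricts to the finite subgroup $\widehat G=\tfrac1{n_1}\ZZ\times\cdots\times\tfrac1{n_d}\ZZ \pmod{\ZZ^d}\subseteq\TT^d$: on $\widehat G$ one has $\ft{\one_{\widetilde E}}|_{\widehat G}=\ft{\one_E}^G$ because $\widetilde E\subseteq D$, so a spectrum of $\widetilde E$ lying inside $\widehat G$ is literally a spectrum of $E$ in $G$, contradicting the non-spectrality of $E$.

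The hard part — the real content of the theorem — is exactly this reverse descent of spectra. A spectrum of the lift need not visibly be a product $S\oplus\ZZ^d$, and two of its points may differ by a nonzero integer vector, in which case the cube's own zeros supply the orthogonality; thus reduction modulo $\ZZ^d$ yields only a multiset from which a genuine spectrum of $\widetilde E$ must be extracted. Worse, completing the first step requires forcing that spectrum into the rational subgroup $\widehat G$, which is delicate since rationality of spectra is open in general (cf.\ the Question in \S\ref{ss:periodicity-of-spectrum}). The way around is to exploit the extra structure the lift carries that a generic spectral set does not: $\widetilde E$ does not merely sit in $\ZZ^d$, it tiles $\ZZ^d$ \emph{periodically} with period lattice $L$. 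I expect the decisive step to be showing that this periodic tiling, together with the cube rigidity in the $\RR^d\to\ZZ^d$ reduction, pins any spectrum down to $\widehat G$; the tiling computations above and the identity $\ft{\one_{\widetilde E}}|_{\widehat G}=\ft{\one_E}^G$ are then routine bookkeeping by comparison.
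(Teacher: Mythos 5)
Your overall strategy (contrapositive descent of spectra through $\RR^d \to \ZZ^d \to G$) and your treatment of the tiling half are fine, and your $\RR^d \to \ZZ^d$ sketch is essentially the correct one, which the paper also uses: a spectrum of $E'' = \widetilde E + [0,1)^d$ has density $\Abs{E}$, so some translate $k + [0,1)^d$ of the unit cube contains at least $\Abs{E}$ of its points; their pairwise differences lie in $(-1,1)^d$, which misses $Z(\one_{[0,1)^d})$ (points with a nonzero integer coordinate), hence by orthogonality they lie in $Z(P)$; identifying that cube with $\TT^d$ gives $\Abs{E}$ mutually orthogonal characters for the $\Abs{E}$-point set $\widetilde E$, i.e.\ a spectrum of $\widetilde E$ in $\ZZ^d$. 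This also disposes of your ``multiset after reduction mod $\ZZ^d$'' worry: one does not reduce the whole spectrum, one only keeps its intersection with a single cube.

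The genuine gap is the step $\ZZ^d \to G$, and it is fatal for the lift you chose. Lifting $E$ to the single copy $\widetilde E \subseteq D$ forces you to prove: if $\widetilde E$ has \emph{some} spectrum in $\TT^d$, then it has one inside (a translate of) the finite subgroup $\widehat G$. As you note yourself, this is essentially the open rationality question for spectra, and your proposed escape --- that periodicity of the tiling $\widetilde E \oplus (\widetilde T + L) = \ZZ^d$ ``pins any spectrum down to $\widehat G$'' --- is not substantiated by anything: the location of a tile's tiling complements gives no control over the location of its spectra, and no such pinning statement is known. The paper's proof avoids this entirely by choosing a different lift, and this is the key idea your proposal is missing: instead of one copy, take $A(k) = \widetilde E + T$, where $T = \Set{0, n_1, \ldots, (k-1)n_1} \times \cdots \times \Set{0, n_d, \ldots, (k-1)n_d}$ is a coarse $k\times\cdots\times k$ grid and $k$ is large. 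Then $\ft{\one_{A(k)}} = \ft{\one_{\widetilde E}}\cdot\ft{\one_T}$, and the factor $\ft{\one_T}$ vanishes on all of $\TT^d$ except small neighborhoods (of size $1/(kn_j)$ in coordinate $j$) of a refined grid; so orthogonality confines any spectrum $S$ of $A(k)$, which has $\Abs{E}k^d$ points, to those neighborhoods. Pigeonholing $S$ over the $k^d$ cosets of a neighborhood of $H = \widehat G$ yields $\ge \Abs{E}$ points in one coset, and rounding them onto the grid is legitimate for $k$ large because $\ft{\one_{\widetilde E}}$, being a fixed trigonometric polynomial, cannot vanish arbitrarily close to a point of $H$ where it is nonzero. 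After a translation this produces a genuine spectrum of $E$ inside $\widehat G$ --- the contradiction you wanted. Without this inflation (or some substitute mechanism that forces spectra toward $\widehat G$), your reduction cannot be completed.
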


\begin{proof}
Assume $A \subseteq G = \ZZ_{n_1}\times\cdots\times\ZZ_{n_d}$ is a non-spectral tile and write 
$$
T = \Set{0,n_1,2n_1,\ldots,(k-1)n_1}\times\cdots\times
                        \Set{0,n_d,2n_d,\ldots,(k-1)n_d}
$$
for the $k \times \cdots \times k$ grid spaced by $n_1 \times \cdots \times n_d$.
Finally, define $A(k) = A+T$ (see Fig.\ \ref{fig:a-k}).

\begin{figure}[h]
\input 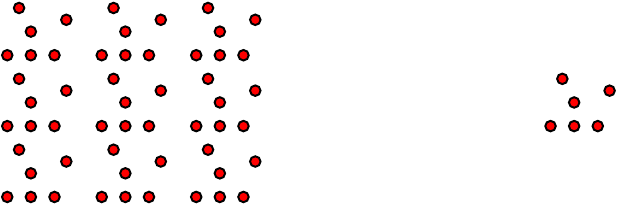_t
\caption{If $A$ is a non-spectral tile in $\ZZ_{n_1}\times\cdots\times\ZZ_{n_d}$ then the set $A(k) = A + \Set{0,n_1,2n_1,\ldots,(k-1)n_1}\times\cdots\times      \Set{0,n_d,2n_d,\ldots,(k-1)n_d}$ is a non-spectral tile in $\ZZ^d$ if $k$ is sufficiently large.
}\label{fig:a-k}
\end{figure}

The set $A(k)$ clearly tiles $\ZZ^d$ for any value of $k$ because $A$ tiles $\ZZ_{n_1}\times\cdots\times\ZZ_{n_d}$ (tilings of this group translate to periodic tilings of $\ZZ^d$ with period lattice $n_1\ZZ \times \cdots \times n_d\ZZ$). We will show that it is not spectral in $\ZZ^d$. Suppose $S \subseteq \TT^d$ is a spectrum of $A(k)$.

\begin{figure}[h]
\ifdefined\SMART\resizebox{10cm}{!}{\input 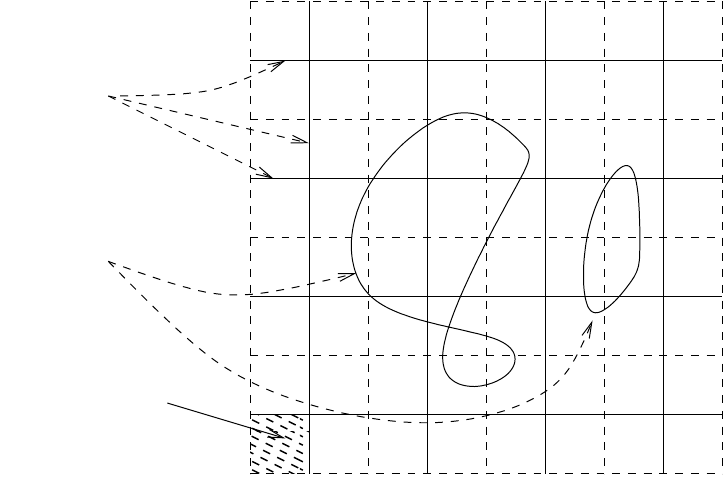_t}\else
\input zero-set.pdf_t
\fi
\caption{The zero set of $\ft{\one_{A(k)}}$ consists of the zero set of the trigonometric polynomial $\ft{\one_A}$ with a set of affine subspaces parallel to the axes from from $\ft{\one_T}$.}\label{fig:zero-set}
\end{figure}

We have $\one_{A(k)} = \one_A*\one_T$ so we have $\ft{\one_{A(k)}} = \ft{\one_A}\cdot\ft{\one_T}$ and for the zero sets we have
$$
Z(\one_{A(k)}) = Z(\one_A) \cup Z(\one_T).
$$
See Fig.\ \ref{fig:zero-set}.

An easy calculation shows that
$$
Z(\one_T) = \Set{\xi \in \TT^d:\ \xi_j = \frac{\nu}{k n_j} \text{ for some $j$ and integer $\nu$, where $k$ does not divide $\nu$}}.
$$
Define the rectangle
$$
Q= \left[0,\frac{1}{kn_1}\right)\times\cdots\times\left[0,\frac{1}{kn_d}\right)
$$
and the subgroup $H$ of $\TT^d$ (independent of the parameter $k$) consisting of all points $\xi$ in $\TT^d$ all of whose coordinates $\xi_j$ are multiples of $1/n_j$ (for all $j$). We can view $H$ as the dual group of the product $\ZZ_{n_1} \times \cdots \times \ZZ_{n_d}$.

\begin{figure}[h]
\input 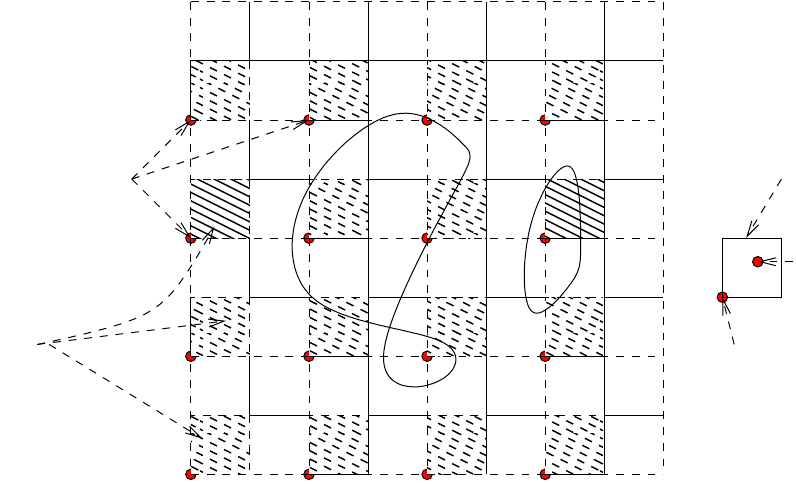_t
\caption{The various sets defined in the lifting of non-spectrality from a finite group to $\ZZ^d$.}\label{fig:zs-proof}
\end{figure}

Observe next that $H + (Q-Q)$ does not intersect $Z(\one_T)$ and take $k$ large enough (which makes $Q$ small enough) to ensure:
$$
\mbox{$h\in H$ with $\ft{\one_A}(h) \neq 0$} \Rightarrow
 \mbox{$h+(Q-Q)$ does not meet $Z(\one_A)$}.
$$
Partition now the spectrum $S$ into the sets $S_\nu$ as follows. Define
$$
S_\nu = S \cap \left(H+Q+\left(\frac{\nu_1}{kn_1},\ldots,\frac{\nu_d}{kn_d}\right)\right),
$$
for $\nu \in \Set{0,\ldots,k-1}^d$.
Since $\Abs{S} = \Abs{A} k^d$ it follows that there exists some $S_\mu$ with
$$
\Abs{S_\mu} \ge \Abs{A}.
$$
As $Q-Q$ does not meet $Z_{A(k)}$ we cannot have more than one point of $S$ in any translate of $Q$.

For $x \in \TT^d$ write $\lambda(x)$ for the unique point whose $j$-th coordinate is an integer multiple of $1/(k n_j)$ (for all $j$) and is such that $x \in \lambda(x)+Q$ (the lower left corner of the rectangle to which $x$ belongs).
It follows that for any $x, y \in \TT^d$ we have
$$
x-y \in \lambda(x)-\lambda(y) + Q-Q.
$$
If $x, y \in S_\mu$ then $\lambda(x)-\lambda(y) \in H \cap Z(\one_A)$. Refer to Fig.\ \ref{fig:zs-proof}.

Define now $\Lambda = \Set{\lambda(x):\ x \in S_\mu}$ and translate $\Lambda$ so that it contains 0 and so that $\Lambda \subseteq H$. Since $\Lambda-\Lambda \subseteq Z(\one_A)\cup\Set{0}$ and $\Abs{\Lambda} \ge \Abs{A}$ we obtain that $\Lambda$ is a spectrum of $A$ in the group $G$, a contradiction. 

\end{proof}

Suppose now that we have a non-spectral tile $A$ in $\ZZ^d$, a finite set. Let us see how we can construct a non-spectral tile in $\RR^d$. Let $Q = [0, 1)^d$ and define the set $E = A+Q \subseteq \RR^d$. Obviously $E$ tiles $\RR^d$ with the same tiling complement it has in $\ZZ^d$. Let us show that it is still not spectral in $\RR^d$.

Suppose $E$ has a spectrum $S \subseteq \RR^d$. It follows that $\dens S = \Abs{A}$ so there exists a $k \in \ZZ^d$ so that $\Lambda = S \cap (k+Q)$ has at least $\Abs{A}$ points. Viewing $Q$ as $\TT^d$ we show that $\Lambda$ is a spectrum of $A$.

Observe first that $Z(\one_E) = Z(\one_A) \cup Z(\one_Q)$ where the set $Z(\one_A)$ is a $\ZZ^d$-periodic set in $\RR^d$. But $\Lambda-\Lambda \subseteq Q-Q$ and $Q-Q$ does not intersect $Z(\one_Q)$, which consists of all points with at least one non-zero integer coordinate. It follows that $\Lambda-\Lambda \subseteq Z(\one_A)$ and $\Lambda$ is a spectrum of $A \subseteq \ZZ^d$, a contradiction, so $E$ is not spectral in $\RR^d$.

So far we have produced an example contradicting ``tiling $\Longrightarrow$ spectral'' in $\RR^d$ for $d \ge 5$. In \cite{farkas2006onfuglede,farkas2006tiles} the dimension was eventually reduced to 3, so that the Fuglede Conjecture, in its generality, remains open in both directions in dimensions 1 and 2 only.

\subsection{Connected counterexamples}\label{ss:connected}

Another recent development \cite{greenfeld2023tiling} is the construction of counterexamples for both directions of the Fuglede Conjecture (as well as for aperiodic translational tiles \cite{greenfeld2024counterexample}) which are connected. This is achieved by using the existing counterexamples for each direction to construct connected counterexamples in higher dimension.

For the ``spectral $\Longrightarrow$ tiling'' counterexample the dimension has to be increased by 2, thus giving connected counterexamples for this direction in dimension 5.

For the ``tiling $\Longrightarrow$ spectral'' direction though the dimension has to be increased by a number that depends on the counterexample used as a seed to a certain iterative procedure (which increases the dimension at every step) that eventually produces a connected counterexample. Though we know that this process finishes in a finite number of steps we do not know what the final dimension of the counterexample will be apart from the fact that it exists.

\qq{
Improve the process \cite{greenfeld2023tiling} that constructs a connected counterexample to the ``tiling $\Longrightarrow$ spectral'' direction of the Fuglede Conjecture. More precisely, reduce the price one has to pay in dimension increase to a constant number that does not depend on the seed counterexample that initiates the connectification process in \cite{greenfeld2023tiling}.
}

\section{Orthogonal exponentials on the disk}


In \cite{fuglede1974operators} it was claimed that the disk in the plane (and the Euclidean ball in $\RR^d$) is not a spectral set. According to the Fuglede Conjecture this is as it should be.
A proof appeared in \cite{iosevich-katz-pedersen}.
Later it was proved in \cite{fuglede-ball,iosevich2003combinatorial}
that any orthogonal set of exponentials for the ball must necessarily be finite but 
it is still unknown if there is a uniform bound for the size of each
orthogonal set. It is still a possibility that there are arbitrarily large orthogonal sets of exponentials for the ball and proving a uniform upper bound is probably very hard as it appears to depend on the existence or not of algebraic relations among the roots of the Bessel function $J_1$.

In the direction of showing upper bounds for orthogonal sets of exponentials it
was proved in \cite{iosevich-jaming} that if $\Lambda$ is a set of orthogonal exponentials for the ball then $\Abs{\Lambda \cap [-R,R]^d} = O(R)$, with the implicit constant independent of $\Lambda$.
Completeness would of course require that $\Abs{\Lambda \cap [-R,R]^d} \gtrsim R^d$.

Here we will describe the argument in \cite{iosevich2013size} which led to the bound
\begin{equation}\label{bound-2-3}
\Abs{\Lambda \cap [-R,R]^d} = O(R^{2/3}).
\end{equation}

Suppose $D$ is the unit disk in the plane, centered at 0 and $\Lambda$ is an orthogonal set of exponentials for $D$. Write $N = \Abs{\Lambda \cap [-R, R]^d}$. We seek upper bounds on $N$ in terms of $R$, for large $R$.

By the orthogonality of $\Lambda$ we have $\Lambda-\Lambda \subseteq Z(\one_D) \cup \Set{0}$ so we need information for the zero set $Z(\one_D)$. Clearly this is a radial set in the plane, a collection of concentric circles centered at the origin and let us call the radii of these circles
$$
0 < r_1 < r_2 < \cdots.
$$
There are some well known estimates for these numbers (which happen to be the zeros of the Bessel function $J_1(2\pi r)$) \cite{abramowitz-stegun}:
\begin{equation}\label{rn-1}
r_n = \frac{n}{2} + \frac18 + \frac{K_1}{2\pi(n\pi + \pi/4)} + O(\frac{1}{n^3})
\end{equation}
where $K_1$ is an absolute constant. It follows from this that if $0<m-n\le K$ and $m, n \to +\infty$ then
\begin{equation}\label{rn-diff}
r_m - r_n = \frac{m-n}{2} + O((r_m-r_n)r_n^{-2}).
\end{equation}
It is the positive offset $1/8$ in \eqref{rn-1} that makes the following lemma possible. It roughly says that three points of the spectrum cannot be approximately on the same line, if they are far apart.

\begin{lemma}\label{lm:angle}
There are constants $R_0, C>0$ such that whenever
$a, b, c \in \RR^2$ are orthogonal for the unit disk, with
$\Abs{a-c}, \Abs{b-c}, \Abs{a-b} \ge R \ge R_0$ then the two largest
angles of the triangle $abc$ (as well as all its external angles) are
\begin{equation}\label{angle-bound}
\ge \frac{C}{R^{1/2}}.
\end{equation}
\end{lemma}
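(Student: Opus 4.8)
The plan is to reduce the whole statement to a single lower bound, and then to play the arithmetic of the Bessel radii $r_n$ against the law of cosines. Label the vertices so that the opposite sides are $s_1,s_2,s_3$ and the opposite angles are $\theta_1\le\theta_2\le\theta_3$; then $\theta_3$ is the largest angle, $s_3$ the longest side, and I set $\epsilon=\pi-\theta_3=\theta_1+\theta_2$. The two largest internal angles are $\theta_2,\theta_3$, and the external angles are $\pi-\theta_i$, the smallest of which is $\pi-\theta_3=\epsilon$. Since $\theta_3\ge\pi/3$ is bounded below by an absolute constant, while $\theta_2\ge\epsilon/2$ and every external angle is $\ge\epsilon$, the entire conclusion follows once I show $\epsilon\ge c\,R^{-1/2}$. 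By pairwise orthogonality each of the three distances is a zero of $\ft{\one_D}$, i.e.\ $s_i=r_{n_i}$; as $s_i\ge R$ and $r_n=\tfrac n2+O(1)$ by \eqref{rn-1}, all indices satisfy $n_i\gtrsim 2R$, so the error terms below are uniformly $O(1/R)$.

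\textbf{The arithmetic heart.} The crucial inequality is $s_1+s_2-s_3\ge \tfrac1{16}$, and this is precisely where the offset $\tfrac18$ in \eqref{rn-1} is used. Indeed \eqref{rn-1} gives
\[
s_1+s_2-s_3=\frac{n_1+n_2-n_3}{2}+\frac18+O(1/R),
\]
so $s_1+s_2-s_3$ lies within $O(1/R)$ of the set $\tfrac12\ZZ+\tfrac18$, all of whose elements are at distance at least $\tfrac18$ from $0$. The triangle inequality forces $s_1+s_2-s_3>0$, so for $R\ge R_0$ the nearby element of $\tfrac12\ZZ+\tfrac18$ cannot be the negative value $-\tfrac38$ and must be $\ge\tfrac18$; hence $s_1+s_2-s_3\ge\tfrac18-O(1/R)\ge\tfrac1{16}$. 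Had the offset been $0$, the combination $\tfrac18+\tfrac18-\tfrac18$ would instead be $0$ and no such gap would survive.

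\textbf{Feeding this into the geometry.} I would then apply the law of cosines at the large angle, $s_3^2=s_1^2+s_2^2-2s_1s_2\cos\theta_3$, in the form
\[
s_1+s_2-s_3=\frac{(s_1+s_2)^2-s_3^2}{s_1+s_2+s_3}=\frac{4\,s_1s_2\,\sin^2(\epsilon/2)}{s_1+s_2+s_3}.
\]
Using $\dfrac{s_1s_2}{s_1+s_2+s_3}<\dfrac{s_1s_2}{s_1+s_2}\le\min(s_1,s_2)\le M$, where $M$ is an upper bound for the side lengths, and combining with $s_1+s_2-s_3\ge\tfrac1{16}$, I get $\sin^2(\epsilon/2)\ge\tfrac1{64M}$; since $\sin(\epsilon/2)\le\epsilon/2$ on $[0,\pi/2]$ this yields $\epsilon\ge\tfrac1{4\sqrt M}$ with no case analysis on whether $\epsilon$ is small. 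In the situation where the lemma is applied the three points lie in a box of side $O(R)$, so $M\le C'R$ and $\epsilon\ge c\,R^{-1/2}$, giving the desired bound with lemma constant $C=c/2$.

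\textbf{Main obstacle.} Everything except the arithmetic step is elementary trigonometry and size estimates; the one genuinely essential input is that the three phase defects combine to $\tfrac18\neq0$, keeping $s_1+s_2-s_3$ a fixed distance from the ``resonance lattice'' $\tfrac12\ZZ$ and hence bounded away from $0$. The only hypothesis I would make explicit is that the side lengths are comparable to $R$ (an upper bound $M=O(R)$ on the diameter of $\Set{a,b,c}$): this is automatic in the application to $\Lambda\cap[-R,R]^2$ and is exactly what lets the conclusion be stated in terms of $R$ alone.
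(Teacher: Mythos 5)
Your proposal has the same skeleton as the paper's own proof. The ``arithmetic heart'' is identical: the three offsets of $\tfrac18$ in \eqref{rn-1} combine to $\tfrac18-\tfrac18+\tfrac18\neq 0$, so $s_1+s_2-s_3$ is pinned a fixed distance from $0$; and the conversion of this defect into an angle bound via the law of cosines is also the paper's step (the paper writes it as $\Abs{b-c}-(T-R)\le 2R(1-\cos\theta)\le R\theta^2$ for the second-largest angle $\theta$, you write it as $s_1+s_2-s_3=\frac{4s_1s_2\sin^2(\epsilon/2)}{s_1+s_2+s_3}$ for the supplement $\epsilon$ of the largest angle; since $\epsilon\le 2\theta_2$ these are the same inequality up to a constant).

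The one genuine flaw is your last step. You replace $\min(s_1,s_2)$ by a global upper bound $M$ on the side lengths and then assume $M=O(R)$. That hypothesis is not in the lemma, and your claim that it is ``automatic in the application'' is not correct: the lemma is consumed through Corollary \ref{cor:strip} and Corollary \ref{cor:smallest-gap}, where the three points lie in a long thin strip (width $\sim\Delta^{1/2}$, length $\sim R$) and their mutual distances can be far larger than the separation parameter, so no diameter bound in terms of that parameter is available there. You correctly sensed a tension — with only a lower bound on the sides, a conclusion of the form $C/R^{1/2}$ cannot hold for $R$ much smaller than the actual sides, since a near-degenerate triangle at scale $M\gg R$ has second-largest angle only $\sim M^{-1/2}$ — but the intended resolution (and the one the paper's proof implements by setting $R=\Abs{a-c}$, the shortest side, ``without loss of generality'') is to read the bound in terms of the minimal pairwise distance, not to impose an upper bound on the diameter. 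In the paper's chain the long side $T$ cancels, so the bound $\theta\ge C/\sqrt{s_{\min}}$ holds with no assumption on the other sides, and that is the form the corollaries use. Fortunately the repair is already inside your own argument: in $\frac{s_1s_2}{s_1+s_2+s_3}\le\min(s_1,s_2)$ simply keep $\min(s_1,s_2)=s_1=s_{\min}$ instead of passing to $M$; this yields $\epsilon\ge\tfrac14\,s_{\min}^{-1/2}$ unconditionally, which is exactly the statement the paper proves and needs.
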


\begin{proof}
Assume without loss of generality that $R=\Abs{a-c}\le\Abs{b-c}\le\Abs{a-b}$
(see Fig.\ \ref{fig:triangle}).
Writing $\theta = \ft{bac}$ for the second largest angle and $T=\Abs{a-b}$ we have

\begin{figure}[h]
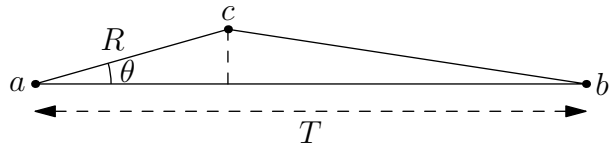

\begin{center}
\begin{asy}
import graph;
import markers;
size(8cm);
real off=0.3;
pair    a=(-1,0), b=(1,0), c=(-off,0.2);
pair    d=(-off,0);
dot(a); dot(b); dot(c);
draw(a -- b -- c -- a);
draw(c -- d, dashed);
draw("$T$", (a-(0,0.1)) -- (b-(0,0.1)), dashed, Arrows);
markangle("$\theta$", b, a, c);
label("$a$", a, W);
label("$b$", b, E);
label("$c$", c, N);
//label("$d$", d, NE);
label("$R$", 0.5*(a+c), NW);
\end{asy}

\caption{Three points orthogonal for the unit disk cannot be approximately on a straight line.}\label{fig:triangle}
\end{center}
\end{figure}

\begin{align*}
\Abs{b-c} &= \sqrt{(T-R\cos\theta)^2 + R^2 \sin^2\theta}\\
 &= \sqrt{(T-R)^2 + 2T R(1-\cos\theta)}
\end{align*}
from which we get
\beql{tmp-1}
\Abs{b-c}-(T-R) = \frac{2T R(1-\cos\theta)}{T-R+\Abs{b-c}} =
 \frac{2R(1-\cos\theta)}{1-\frac{R}{T} + \frac{\Abs{b-c}}{T}} \le 2R(1-\cos\theta) \le R\theta^2.
\eeq
From \eqref{rn-1} it follows that
as $R\to\infty$ the quantities $\Abs{a-b}, \Abs{b-c}, \Abs{a-c}$ are all of the form
$$
\frac{k}{2} + \frac{1}{8} + o(1),\ \ \ \mbox{for some integer $k$}.
$$
It follows that $\Abs{b-c}-(T-R) = \frac{k}{2} + \frac{1}{8} + o(1)$, for some integer $k\ge 0$.
This, together with \eqref{tmp-1}, implies that $\frac{k}{2} + \frac{1}{8} + o(1) \le R \theta^2$ which
gives us the required inequality with constant $C$ arbitrarily close to $\sqrt{1/8}$ when $R$ is large.

\end{proof}

The following is almost immediate from Lemma \ref{lm:angle}. See Fig.\ \ref{fig:strip}.

\begin{corollary}\label{cor:strip}
There is a constant $C'>0$ such that whenever
$a, b, c \in \RR^2$ belong to a $R_0$-separated orthogonal set
for the unit disk and their pairwise distances are at least $L$
then they cannot all belong to a strip of width $C' L^{1/2}$.
\end{corollary}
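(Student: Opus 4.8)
The plan is to deduce the Corollary from Lemma \ref{lm:angle} by a short planar-geometry argument: I will bound from below the \emph{minimal width} of the triangle $abc$ (the width of the thinnest strip containing it) by a constant multiple of $L^{1/2}$, which immediately rules out containment in any strip of width $C' L^{1/2}$ once $C'$ is chosen small enough. First I recall the elementary fact that if the three points lie in a strip of width $w$, then the extent of the triangle in the direction perpendicular to the strip is at most $w$; since the minimal width of the triangle is the minimum of such extents over all directions, we get $w \ge h_{\min}$, where $h_{\min}$ is the minimal width, and for a triangle this equals the shortest altitude (attained perpendicular to the longest side). So it suffices to bound $h_{\min}$ below.

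Next I would order the internal angles as $\alpha \le \beta \le \gamma$ at the corresponding vertices and apply Lemma \ref{lm:angle} with $R = L$ (legitimate since the three points have pairwise distances $\ge L$, and we may assume $L \ge R_0$). The Lemma gives that the two largest angles satisfy $\beta, \gamma \ge C L^{-1/2}$, and from the bound on external angles that $\gamma \le \pi - C L^{-1/2}$. The longest side is the one opposite the largest angle $\gamma$; writing it as $ab$, so that $\gamma$ sits at $c$, the shortest altitude is the one dropped from $c$ onto $ab$, and one computes $h_{\min} = \Abs{b-c}\sin\beta$.

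Now I would estimate the two factors. Since all three points are pairwise at distance $\ge L$ we have $\Abs{b-c} \ge L$. For the sine, note that $\gamma$ being the largest angle forces $\beta \le \pi/2$, so on $[C L^{-1/2}, \pi/2]$ the sine is increasing and $\sin\beta \ge \sin(C L^{-1/2}) \ge (2/\pi)\,C L^{-1/2}$ for $L$ large (enlarging $R_0$ if necessary so that $C L^{-1/2} \le \pi/2$). Combining these gives $h_{\min} \ge (2C/\pi)\, L^{1/2}$. Finally, choosing any $C' < 2C/\pi$ yields $h_{\min} > C' L^{1/2}$, so $a,b,c$ cannot all lie in a strip of width $C' L^{1/2}$, as claimed.

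The computation is short, and the real content is entirely in Lemma \ref{lm:angle}; the only point demanding care is the choice of angle. The largest angle $\gamma$ may be obtuse and is only known to be bounded away from $\pi$, so $\sin\gamma$ need not be comparable to $L^{-1/2}$; this is exactly why I express the shortest altitude through the \emph{second-largest} angle $\beta$, which the Lemma controls from below and which automatically lies in $[0,\pi/2]$, making $\sin\beta$ comparable to $\beta$. Thus the main (minor) obstacles are verifying that the minimal width of the triangle is its shortest altitude and confirming that $\beta$ is genuinely among the ``two largest angles'' to which Lemma \ref{lm:angle} applies.
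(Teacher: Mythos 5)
Your proof is correct and fills in exactly the deduction that the paper leaves as ``almost immediate'' from Lemma \ref{lm:angle}: a triangle with pairwise distances $\ge L$ whose second-largest angle $\beta$ satisfies $\beta \ge C L^{-1/2}$ has minimal width (its shortest altitude, $\Abs{b-c}\sin\beta \ge (2C/\pi)L^{1/2}$) too large to fit in a strip of width $C'L^{1/2}$ once $C' < 2C/\pi$. One cosmetic remark: the parenthetical ``enlarging $R_0$ if necessary'' is unnecessary (and would technically weaken the stated corollary), since $\beta \le \pi/2$ holds automatically for the second-largest angle, so the Lemma's conclusion already forces $CL^{-1/2} \le \pi/2$ whenever its hypotheses are met.
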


\begin{figure}[h]
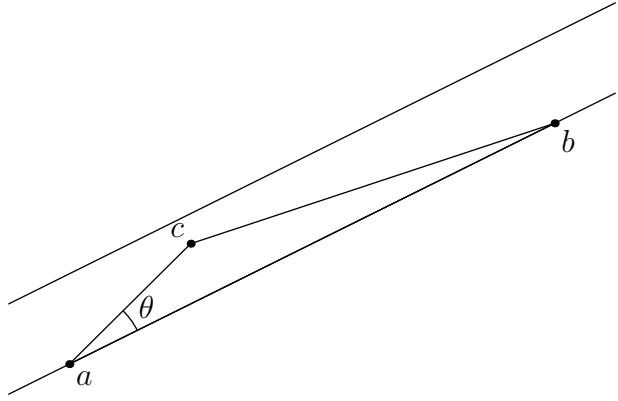

\begin{center}
\begin{asy}
import graph;
import markers;
size(8cm);
pair    A=(0,0), u=(10,5);
pair    a=A+0.1*u, b=A+0.9*u, c=A+(0,1)+0.3*u;
draw(A -- A+u);
draw(A+(0,1.5) -- A+(0,1.5)+u);
draw(a -- b -- c -- a);
dot(a); dot(b); dot(c);
label("$a$", a, SE);
label("$b$", b, SE);
label("$c$", c, NW);
markangle("$\theta$", b, a, c);
\end{asy}

\caption{Three points of the spectrum that are distance at least $L$ from one another cannot belong to the same strip of width $\sim\sqrt{L}$.}\label{fig:strip}
\end{center}
\end{figure}

From Corollary \ref{cor:strip} we get:

\begin{corollary}\label{cor:smallest-gap}
Suppose $\Lambda \subseteq \RR^2$ is a $R_0$-separated set of orthogonal exponentials for the unit disk, $R>0$ and let
$$
\Delta = \inf\Set{\Abs{\lambda-\mu}:\ \lambda, \mu \in \Lambda\cap[-R,R]^2}.
$$
Then
\begin{equation}\label{bound-wrt-gap}
\Abs{\Lambda \cap [-R, R]^2} \le C \frac{R}{\Delta^{1/2}},
\end{equation}
for some constant $C>0$.
\end{corollary}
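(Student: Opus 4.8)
The plan is to obtain the estimate directly from Corollary \ref{cor:strip} by a covering argument: the corollary forbids three widely separated spectral points from lying in a thin strip, so after fixing the separation scale at $\Delta$ every strip of width comparable to $\Delta^{1/2}$ can hold at most two points of $\Lambda\cap[-R,R]^2$, and covering the square by such strips then bounds the total count. First I would set the separation parameter $L=\Delta$. Since $\Delta$ is the infimum of the pairwise distances among the points of $\Lambda\cap[-R,R]^2$, any two distinct such points are at distance $\ge\Delta=L$, and since $\Lambda$ is $R_0$-separated we automatically have $\Delta\ge R_0$, so the hypotheses of Corollary \ref{cor:strip} are in force. Applying the corollary with this $L$ gives: no three points of $\Lambda\cap[-R,R]^2$ can lie in a common strip of width $C'\Delta^{1/2}$.

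Next I would carry out the covering. Partition $[-R,R]^2$ into horizontal strips of the form $\RR\times\big[kC'\Delta^{1/2},(k+1)C'\Delta^{1/2}\big)$; the number of these that meet the square is at most $\lceil 2R/(C'\Delta^{1/2})\rceil$. By the previous paragraph each such strip contains at most two points of $\Lambda\cap[-R,R]^2$, so
\[
\Abs{\Lambda\cap[-R,R]^2}\ \le\ 2\left\lceil \frac{2R}{C'\Delta^{1/2}}\right\rceil.
\]
Absorbing the constants $2$ and $C'$ into a single constant $C$ then yields the claimed bound $\Abs{\Lambda\cap[-R,R]^2}\le C\,R/\Delta^{1/2}$.

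The argument is essentially immediate once Corollary \ref{cor:strip} is available, so the only thing requiring minor care — and the place I would locate the mild "obstacle" — is the additive contribution of the ceiling together with the degenerate regime where $\Delta^{1/2}$ is comparable to $R$. Both are harmless: if $\Lambda\cap[-R,R]^2$ has at most two points the bound is trivial, and otherwise it has at least three points, forcing $\Delta\le\diam([-R,R]^2)=2\sqrt2\,R$, so that the main term $R/\Delta^{1/2}$ dominates the constant coming from $\lceil\cdot\rceil$ for $R$ large. I would emphasize that the genuinely substantive input is the exponent $1/2$ on $\Delta$, which is supplied precisely by the strip-width $C'L^{1/2}$ of Corollary \ref{cor:strip}, itself ultimately a consequence of the $1/8$ offset in the Bessel-zero asymptotics \eqref{rn-1} via Lemma \ref{lm:angle}.
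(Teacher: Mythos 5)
Your proof is correct and is essentially the paper's own argument: cover $[-R,R]^2$ by $O(R/\Delta^{1/2})$ parallel strips of width comparable to $\Delta^{1/2}$ and invoke Corollary \ref{cor:strip} (with $L=\Delta$) to see that each strip holds at most two points of $\Lambda$. Your extra care with the ceiling term and the degenerate regime (using $\Delta\ge R_0$ and $\Delta\le 2\sqrt2\,R$ when three points are present) is a harmless refinement of what the paper leaves implicit.
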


\begin{proof}
Cover $[-R,R]^2$ by $O(R/\Delta^{1/2})$ vertical strips of width $c\Delta^{1/2}$, for small $c>0$.
From Corollary \ref{cor:strip} each of these contains at most two points of $\Lambda$.

\end{proof}

With $\Delta$ defined in Corollary \ref{cor:smallest-gap} we have $\Delta \ge c>0$ (for any spectrum the points are separated by a positive constant at least) so we already have from Corollary \ref{cor:smallest-gap} the bound $\Abs{\Lambda} = O(R)$ or \cite{iosevich-jaming}.

The main theorem of \cite{iosevich2013size} is the following.
\begin{theorem}\label{thm:disk-bound}
There are constants $C_1, C_2$ such that
whenever $\Lambda \subseteq \RR^2$ is an orthogonal set of exponentials for the unit disk in the plane and
$$
t = \inf\Set{\Abs{\lambda-\mu}: \lambda, \mu \in \Lambda, \lambda\neq\mu}
$$
then $\Abs{\Lambda} \le C_1 t$.

Furthermore, $\Abs{\Lambda \cap [-R,R]^2} \le C_2 R^{2/3}$ for all $R\ge 1$.
\end{theorem}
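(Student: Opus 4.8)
The plan is to prove the two assertions in the stated order, deriving the second (the $R^{2/3}$ bound) from the first by a short optimization. Throughout I would use that any subset of an orthogonal set is again orthogonal for the disk, and that $\Lambda$ is finite (this is known, \cite{fuglede-ball,iosevich2003combinatorial}), so the infima defining the various minimal gaps are attained and positive.

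Granting the first bound $\Abs{\Lambda} \le C_1 t$ for the moment, the ``furthermore'' part is immediate. Fix $R \ge 1$, set $\Lambda_R = \Lambda \cap [-R,R]^2$, and let $\Delta$ be the minimal gap of $\Lambda_R$, i.e. the quantity appearing in Corollary \ref{cor:smallest-gap}. Applying the first bound to the orthogonal set $\Lambda_R$ gives $\Abs{\Lambda_R} \le C_1 \Delta$, while Corollary \ref{cor:smallest-gap} (inequality \eqref{bound-wrt-gap}) gives $\Abs{\Lambda_R} \le C R \Delta^{-1/2}$. Multiplying the first of these by the square of the second,
$$
\Abs{\Lambda_R}^3 \le (C_1 \Delta)\,(C R \Delta^{-1/2})^2 = C_1 C^2 R^2,
$$
so $\Abs{\Lambda \cap [-R,R]^2} \le (C_1 C^2)^{1/3} R^{2/3}$, which is \eqref{bound-2-3}. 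The point is that the two bounds pull in opposite directions in $\Delta$ — one rewards a large gap, the other a small one — and their geometric balance occurs exactly at $\Delta \sim R^{2/3}$, which is what produces the exponent $2/3$.

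It therefore remains to prove $\Abs{\Lambda} \le C_1 t$, and here I would reduce to a bound on the diameter. If $\rho$ denotes the circumradius of $\Lambda$, then after a translation $\Lambda \subseteq [-\rho,\rho]^2$, and Corollary \ref{cor:smallest-gap} applied at scale $R=\rho$ yields $\Abs{\Lambda} \le C \rho / t^{1/2}$. Consequently it suffices to show that a finite orthogonal set for the disk with minimal gap $t$ cannot spread out too far, namely
$$
\diam \Lambda \le c\, t^{3/2}
$$
for an absolute constant $c$; combining the two estimates then gives $\Abs{\Lambda} \le C c\, t^{3/2}/t^{1/2} = C_1 t$. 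This scaling is internally consistent: an extremal configuration ought to have $\Abs{\Lambda}\sim t$ points spread over a disk of radius $\sim t^{3/2}$, and correspondingly $\Delta \sim R^{2/3}$ inside a box of side $R$.

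The diameter bound is the crux, and I expect it to be the main obstacle. The mechanism should be Lemma \ref{lm:angle}, whose force comes precisely from the positive offset $1/8$ in the asymptotics \eqref{rn-1}. Let $p, q \in \Lambda$ realize $D = \diam \Lambda$. For any third point $\nu$ far from both (distance $\ge R_0$), the side $pq$ is the longest, so $\angle p\nu q$ is the largest angle of the triangle $p\nu q$; the external-angle clause of Lemma \ref{lm:angle} then forces $\angle p\nu q \le \pi - C/D^{1/2}$, so every far point must keep a definite angular clearance from the segment $pq$ and hence avoid a thin lens of thickness $\sim D^{1/2}$ hugging it. The hard part is to upgrade this local, single-triangle clearance into the global statement $D \lesssim t^{3/2}$: one must combine the clearance enjoyed by \emph{every} pair of far points with the quantization of all pairwise distances to the shells $r_n \approx n/2 + 1/8$ and with the $t$-separation, and show that a spread $D \gg t^{3/2}$ would force either three far points too nearly collinear (contradicting Lemma \ref{lm:angle}, or equivalently the strip form in Corollary \ref{cor:strip} applied simultaneously at all scales $L$ between $t$ and $D$) or a violation of the $t$-separation. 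Making this counting argument rigorous is where the real work lies.
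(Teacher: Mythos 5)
Your derivation of the $R^{2/3}$ bound from the first assertion is correct and is essentially the paper's own argument (the paper takes the minimum of the two bounds $\Abs{\Lambda_R}\le C_1\Delta$ and $\Abs{\Lambda_R}\le CR\Delta^{-1/2}$, you multiply them; either way the exponent $2/3$ falls out). The genuine gap is in the first assertion, which is the actual content of the theorem: the diameter bound $\diam \Lambda \le c\,t^{3/2}$ to which you reduce it is not merely the unproven ``real work'' you defer --- it is false. Take $2\Delta = r_1$ (the first zero of $J_1(2\pi r)$), put $V=(\Delta,0)$, and for arbitrarily large $m$ let $\lambda_m = \left(0,\sqrt{r_m^2-\Delta^2}\right)$. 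The three pairwise distances in $\Set{-V, V, \lambda_m}$ are $r_1$, $r_m$, $r_m$, all zeros of $\ft{\one_D}$, so this is an orthogonal set for the unit disk whose minimal gap $t=r_1$ is fixed while its diameter $\ge r_m$ is as large as you like; hence no bound of the form $\diam\Lambda \le F(t)$ can exist. (This does not contradict Lemma \ref{lm:angle}: the angle at $\lambda_m$ is tiny, but it is the \emph{smallest} angle of the triangle, and the lemma constrains only the two largest angles and the external angles, which here are all close to $\pi/2$.) Consequently your final step, deducing $\Abs{\Lambda}\le C_1 t$ from $\Abs{\Lambda}\le C\rho/t^{1/2}$ with $\rho\le c\,t^{3/2}$, has nothing to stand on, and no refinement of the triangle-counting can rescue it.

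The paper's proof is built precisely to accommodate far-away points: it bounds their \emph{number}, not their distance. After normalizing the two closest points to $\pm V=(\pm\Delta,0)$, every other $\lambda$ satisfies $\Abs{\lambda+V}-\Abs{\lambda-V} = k/2 + O(\Delta\Abs{\lambda}^{-2})$ for some integer $0\le k\le \Floor{4\Delta}$, by \eqref{rn-diff}; so $\lambda$ lies near one of $O(\Delta)$ hyperbolas $H_k$ with foci $\pm V$, and once $\Abs{\lambda}\ge\Delta^{3/2}$ this places $\lambda$ in an $O(\Delta^{1/2})$-wide strip around one of their asymptotes. Corollary \ref{cor:strip} allows at most two points of $\Lambda$ per strip, giving $O(\Delta)$ points outside the ball of radius $\Delta^{3/2}$, while Corollary \ref{cor:smallest-gap} with $R=\Delta^{3/2}$ gives $O(\Delta^{3/2}/\Delta^{1/2})=O(\Delta)$ points inside it; together $\Abs{\Lambda}=O(\Delta)=O(t)$. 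In short, the true extremal structure is not a blob of radius $t^{3/2}$ but a bounded core plus $O(t)$ points strung out, possibly arbitrarily far, along $O(t)$ asymptote directions; it is the strips, not the diameter, that must be counted.
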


Let us sketch the proof of Theorem \ref{thm:disk-bound}. For the details in the various delicate estimates see \cite{iosevich2013size}.

Write $\Delta = t/2$ and after an appropriate rigid motion of $\Lambda$, to which we are entitled, we can assume that two points of $\Lambda$ are
$$
V = (\Delta, 0) \text{ and } -V = (-\Delta, 0).
$$
Let $\lambda$ be any third point in $\Lambda$, say in the first quadrant and consider the hyerbola with foci at $\pm V$ and going through $\Lambda$. Call $a(\lambda)$ the point where this hyperbola intersects the $x$-axis. See Fig.\ \ref{fig:hyperbola}.
 
\begin{figure}[h]
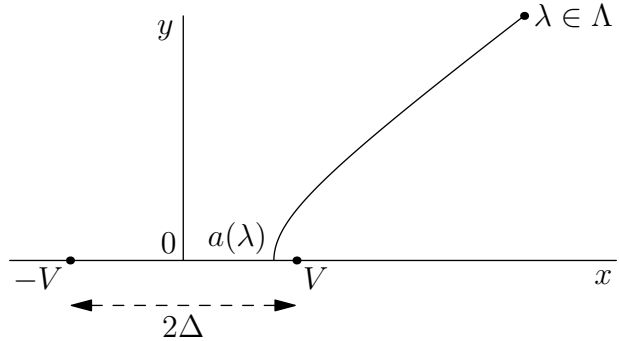

\begin{center}
\begin{asy}
import graph;
import contour;
import markers;
size(8cm);

real    a=8.0, c=10.0;
real    b, factor=3;
b = sqrt(c^2-a^2);

real[]  v={1.0};

real f(real x, real y) {return x^2/a^2 - y^2/b^2;}

real wdt=factor*c;
real hgt=(b/a)*factor*c;
real t=2.0;
real x=a*cosh(t);
real y=b*sinh(t);
//fill((0,0)--(wdt+c,0)--(wdt,hgt)--cycle, mediumgray);
dot("$\lambda\in\Lambda$", (x,y));
draw(contour(f, (0, 0), (factor*c, factor*c), v));
//draw("$L$", (0,0) -- (wdt, hgt), NW);
//draw("$M$", (c,0) -- (wdt+c, hgt));
draw("$2\Delta$",(-c,-0.4*c) -- (c,-0.4*c), dashed, Arrows, PenMargins);
//markangle("$c\Delta^{-1/2}$", (2*c,0), (c,0), (wdt+c, hgt));

xaxis("$x$");
dot("$-V$", (-c,0), SW);
dot("$V$", (c,0), SE);
label("$a(\lambda)$", (a,0), NW);
label("0", (0,0), NW);
yaxis("$y$",0,y);
\end{asy}

\caption{Take the hyperbola with foci at $\pm V$ that goes through another point $\lambda \in \Lambda$. Call $a(\lambda)$ the point where the hyperbola intersects the $x$-axis.}\label{fig:hyperbola}
\end{center}
\end{figure}

The quantity $2 a(\lambda)$ is the difference of the distances of $\lambda$ from the foci of the hyperbola and from the asymptotics \eqref{rn-diff} we obtain
\begin{equation}\label{2al}
2a(\lambda) = \Abs{\lambda+V} - \Abs{\lambda-V} = \frac{k}{2}+O(\Delta \Abs{\lambda}^{-2}),
\end{equation}
for some integer $k$.

Define the hyperbolas $H_k$ as the locus of points $p$ in the plane satisfying
$$
\Abs{p+V} - \Abs{p-V} = \frac{k}{2},\ \ \ k=0, 1, \ldots, \Floor{4\Delta}.
$$
There are $O(\Delta)$ such hyperbolas and it is important that each $\lambda$ is ``near'' one such hyperbola.

\begin{figure}[h]
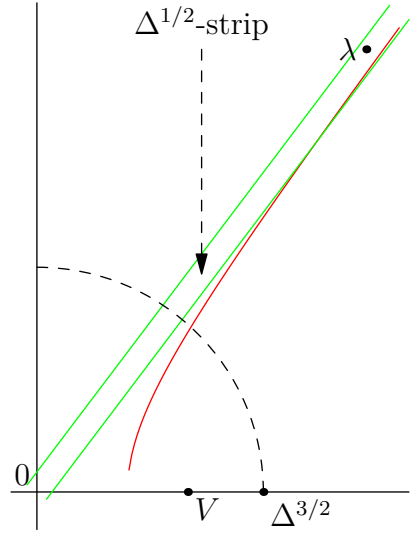

\begin{center}
\begin{asy}
import graph;
import contour;
import markers;
size(7cm);

real    a=2.9, c=5.0;
real    b = sqrt(c^2-a^2);
real    factor=4;

real f(real x) { return sqrt(b*b*(x*x/(a*a)-1)); }
real fline(real x) {return (b/a)*x;}

real t=2.0;
real x=a*cosh(t);
real y=b*sinh(t);
dot("$\lambda$", (x,y), W);

a=3; b = sqrt(c^2-a^2);
draw(graph(f, a+0.05, 1.1*x), red);
pair u=(b/c, -a/c);
draw(shift(0.4*u)*graph(fline, 0, 1.1*x), green);
draw(shift(-0.4*u)*graph(fline, 0, 1.1*x), green);

pair loc1=(x/2, y);
pair loc2=(x/2, fline(x/2));
draw(loc1--loc2, dashed, Arrow);
label("$\Delta^{1/2}$-strip", loc1, N);

draw(arc((0,0), 1.5*c, 0, 90), dashed);
dot("$\Delta^{3/2}$", (1.5*c,0), SE);
xaxis();
dot("$V$", (c,0), SE);
label("0", (0,0), NW);
yaxis();
\end{asy}
\end{center}
\caption{Each $\lambda$ of size $\Abs{\lambda} \ge \Delta^{3/2}$ is in a $O(\Delta^{1/2})$-width strip around an asymptote to a hyperbola $H_k$.}\label{fig:strip-around-hyperbola}
\end{figure}

More specifically we can show that each point $\lambda \in \Lambda$ with $\Abs{\lambda} \ge \Delta^{3/2}$ must be at distance $O(\Delta^{1/2})$ from the asymptote to one of the hyperbolas $H_k$. See Fig.\ \ref{fig:strip-around-hyperbola}.

According to Lemma \ref{cor:strip} each strip whose midline is an asymptote to a hyperbola and its width is $O(\Delta^{1/2})$ may contain at most 2 $\lambda$'s and since there are $O(\Delta)$ such strips we have a total of $O(\Delta)$ points of $\Lambda$ outside the disk centered at $0$ of radius $\Delta^{3/2}$. The size of $\Lambda$ in that disk can be bounded by applying Corollary \ref{cor:smallest-gap} with $R=\Delta^{3/2}$ and we see that this part of $\Lambda$ is also of size $O(\Delta)$. This completes the proof that $\Abs{\Lambda} = O(t)$.

To get the bound on $\Abs{\Lambda \cap [-R, R]^d}$ in terms of $R$ we only need to observe that we have two upper bounds for $\Abs{\Lambda}$: the bound $O(R/\Delta^{1/2})$ coming from Corollary \ref{cor:smallest-gap} and the bound $\Abs{\Lambda} = O(\Delta)$ that we just finished proving. The minimum of these two upper bounds is $O(R^{2/3})$ which is the promised bound.

In a major recent development it has been shown \cite{zakharov2024sets}
$$
\Abs{\Lambda \cap [-R, R]^2} = O(R^{3/5+\epsilon})
$$
for any positive $\epsilon$. The proof in \cite{zakharov2024sets} builds on Corollary \ref{cor:strip} and on Theorem \ref{thm:disk-bound}.

%

\section{Weak tiling} \label{s:w-tiling}


\subsection{Spectrality implies weak tiling}\label{ss:spectrality-implies-weak-tiling}
Suppose we work on a group $G$ with counting measure, such as any finite group or $\ZZ^d$, and $E = \Set{e_1, e_2, \ldots, e_N} \subseteq G$ is a finite spectral set, of size $N$. The spectrum $\Lambda  = \Set{\lambda_1, \lambda_2, \ldots, \lambda_N}\subseteq \ft{G}$ also has $N$ elements. Orthogonality of the characters means that the matrix
$$
M = \left( \lambda_j(e_k) \right)_{j, k = 1, 2, \ldots, N}
$$
has orthogonal rows, so it also has orthogonal columns. This means that $E$, viewed now as a subset of the dual group of $\ft{G}$, which is $G$ itself, is a spectrum of $\Lambda$ (the points $g\in G$ act on the characters $\gamma \in \ft{G}$ by $\gamma \to \gamma(g)$).

By \eqref{spectrum-as-tiling} we now have the tiling conditions
$$
N^{-2} \sum_{\lambda \in \Lambda} \Abs{\ft{\one_E}(t-\lambda)}^2 = 1
\ \ \text{ ($\Lambda$ is a spectrum of $E$) }
$$
and
$$
N^{-2} \sum_{e \in E} \Abs{\ft{\one_\Lambda}(x-e)}^2 = 1
\ \ \text{ ($E$ is a spectrum of $\Lambda$) }.
$$
These can be rewritten as the convolutions
$$
N^{-2} \Abs{\ft{\one_E}}^2 * \one_\Lambda = 1
\ \text{ and }\ 
N^{-2} \Abs{\ft{\one_\Lambda}}^2 * \one_E = 1.
$$
The second of those can be viewed as a tiling by $E$ with a function, or set of fractional translates. Let us write $w(x) = N^{-2} \Abs{\ft{\one_\Lambda}}^2(x)$ for $x \in G$. Then the second convolution above becomes
\begin{equation}\label{w-tiling}
\one_E * w = 1.
\end{equation}
The function $w$ is nonnegative and is equal to $1$ at $0$ of $G$. If the function $w$ happens to take only the values 0 or 1 then \eqref{w-tiling} would be an ordinary tiling of $G$ by translates of $E$ (at the locations where the 1's occur in $w$). In general $w$ does not have this property so we call this situation a \textit{weak tiling} by $E$. So weak tiling is whenever \eqref{w-tiling} holds for some function $w$ on $G$ which is nonnegative and equals $1$ at $0 \in G$.

We have proved:
\begin{theorem}\label{thm:discrete-w-tiling}
If $E \subseteq G$ is a finite set which is spectral with the counting measure then there is a weak tiling of $G$ by $E$.
\end{theorem}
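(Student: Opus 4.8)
The plan is to exploit the symmetry of the orthogonality relation between a set and its spectrum. Since $E$ is spectral with counting measure, $L^2(E)$ has dimension $N = \Abs{E}$, so the orthogonal basis of characters $\Lambda$ must consist of exactly $N = \Abs{\Lambda}$ elements. First I would form the $N \times N$ matrix
$$
M = \left( \lambda_j(e_k) \right)_{j, k = 1, \ldots, N},
$$
whose $(j,k)$ entry records the value of the character $\lambda_j$ at the point $e_k$. Orthogonality of $\Lambda$ on $E$ says precisely that the rows of $M$ are pairwise orthogonal in $\CC^N$, and since each character has modulus $1$ every row has norm $\sqrt N$, so $N^{-1/2} M$ is unitary.

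The key observation is that a unitary (equivalently, a square matrix with orthogonal rows) automatically has orthogonal columns: from $M M^* = N I$ one gets $M^* M = N I$. Reading this column orthogonality back through Pontryagin duality, it says that the points of $E$, now viewed as characters on the dual group $\ft{G}$ via the pairing $\gamma \mapsto \gamma(g)$ (recall $\ft{\ft{G}} \simeq G$), are orthogonal on $\Lambda$. Together with the matching cardinalities $\Abs{E} = \Abs{\Lambda}$, which supplies completeness, this means that $E$ is itself a spectrum of $\Lambda$.

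Having established this duality, I would simply apply the spectrality-as-tiling criterion \eqref{spectrum-as-tiling} to the pair $(\Lambda, E)$ in place of $(E, \Lambda)$. This yields the identity $N^{-2}\Abs{\ft{\one_\Lambda}}^2 * \one_E = 1$ on $G$. Setting $w(x) = N^{-2}\Abs{\ft{\one_\Lambda}}^2(x)$ rewrites it as $\one_E * w = 1$, which is exactly a weak tiling of $G$ by $E$. It then remains only to verify the two defining properties of the weight: $w \ge 0$ because it is a squared modulus, and $w(0) = 1$ because $\ft{\one_\Lambda}(0) = \Abs{\Lambda} = N$ gives $w(0) = N^{-2} N^2 = 1$.

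The main obstacle -- really the only place any genuine idea enters -- is the row-to-column orthogonality step, and in particular its reliance on $M$ being \emph{square}. This squareness is exactly the finiteness and completeness built into the hypothesis: $\Abs{\Lambda} = \dim L^2(E) = \Abs{E}$. In an infinite setting one would have an isometry rather than a unitary, and the dual statement would require considerably more care; here the counting measure reduces everything to elementary linear algebra, which is why the discrete case is the natural clean starting point before one attempts the general weak-tiling theorem.
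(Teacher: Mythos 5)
Your proposal is correct and follows essentially the same route as the paper's own proof: forming the square matrix $M = (\lambda_j(e_k))$, passing from orthogonal rows to orthogonal columns to conclude that $E$ is a spectrum of $\Lambda$, and then reading the swapped spectrality condition \eqref{spectrum-as-tiling} as the weak tiling $\one_E * w = 1$ with $w = N^{-2}\Abs{\ft{\one_\Lambda}}^2$. Your explicit checks that $w \ge 0$ and $w(0)=1$ are exactly the properties the paper requires of the weight.
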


It is one of the greatest developments in the area that Lev and Matolcsi \cite{lev2022fuglede} proved Theorem \ref{thm:discrete-w-tiling} to the case of $\RR^d$ and, in the process, made us realize the importance of this concept.

\begin{theorem}\label{thm:rn-w-tiling}
Suppose $E \subseteq \RR^d$ is a bounded measurable set which is spectral. Then there exists a nonnegative, locally finite measure $\nu$ on $\RR^d$ such that
$$
\one_E * (\delta_0 + \nu) = 1, \text{ almost everywhere in } \RR^d.
$$
\end{theorem}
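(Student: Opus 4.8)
The plan is to carry the finite-group argument behind Theorem~\ref{thm:discrete-w-tiling} over to $\RR^d$. There the weak tiling measure was $w = N^{-2}\Abs{\ft{\one_\Lambda}}^2$, i.e.\ (a normalization of) the \emph{autocorrelation} of the spectrum $\Lambda$ viewed on the Fourier side. The continuous analogue of $\Abs{\ft{\one_\Lambda}}^2$ for an infinite $\Lambda$ is the diffraction measure of $\Lambda$, so the candidate for $\delta_0+\nu$ will be the Fourier transform of the autocorrelation measure of $\Lambda$. Normalize so that $\vol(E)=1$; then, as recorded in \S\ref{ss:spectrality-as-tiling}, a spectrum $\Lambda$ has density $1$ and is well distributed (uniformly discrete and relatively dense), and by \eqref{spectrum-as-tiling} we have the tiling $\Abs{\ft{\one_E}}^2 * \delta_\Lambda = 1$.

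First I would construct the autocorrelation. For $R>0$ set $\gamma_R = \frac{1}{\vol(B_R)}\,\delta_{\Lambda\cap B_R} * \delta_{-(\Lambda\cap B_R)}$. Uniform discreteness and relative denseness make $\delta_\Lambda$ translation bounded, so the $\gamma_R$ are uniformly translation bounded and, by weak-$*$ compactness, converge along a subsequence to a measure $\gamma$. This $\gamma$ is positive definite (a weak-$*$ limit of the positive-definite $\gamma_R$), translation bounded, symmetric, and supported in $\Set{0}\cup(\Lambda-\Lambda)$; its restriction to $\Set{0}$ is the atom $\delta_0$, coming from the diagonal terms $\lambda=\mu$ together with $\vol(B_R)^{-1}\Abs{\Lambda\cap B_R}\to 1$. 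Now orthogonality \eqref{diff-in-zeros} gives $(\Lambda-\Lambda)\setminus\Set{0}\subseteq Z(\one_E)$, and since $\ft{\one_E}$ is continuous with $\ft{\one_E}(0)=1$ it stays positive on a ball $\Set{\Abs{x}\le r}$; hence $\supp\gamma\cap\Set{\Abs{x}\le r}=\Set{0}$ while $\supp\gamma\setminus\Set{0}\subseteq Z(\one_E)$, a clean separation.

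To finish, set $\nu_0 = \ft{\gamma}$. By the Bochner--Schwartz theorem $\nu_0$ is a nonnegative, locally finite (indeed translation bounded) measure, and its mass at the origin equals $(\dens\Lambda)^2=1$, so $\nu := \nu_0-\delta_0$ is again a nonnegative measure. It remains to verify $\one_E * \nu_0 = 1$, which on the Fourier side reads $\ft{\one_E}\cdot\gamma = \delta_0$ (using that $\gamma$ is even). But $\ft{\one_E}$ vanishes on $\supp\gamma\setminus\Set{0}\subseteq Z(\one_E)$, so the product of the continuous function $\ft{\one_E}$ with the measure $\gamma$ is supported at the origin alone and equals $\ft{\one_E}(0)\,\gamma(\Set{0})\,\delta_0=\delta_0$. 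Inverting, $\one_E*\nu_0=1$ as tempered distributions; since the left-hand side is a bounded locally integrable function, this is an equality almost everywhere, i.e.\ $\one_E*(\delta_0+\nu)=1$ a.e.

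I expect the main obstacle to be the construction and regularity of the autocorrelation/diffraction pair $(\gamma,\ft{\gamma})$: justifying translation boundedness so that $\gamma$, its Fourier transform, and the product $\ft{\one_E}\cdot\gamma$ are all legitimate, verifying that the subsequential weak-$*$ limit retains an atom of mass exactly $1$ at the origin (which is where the density computation $\Abs{\Lambda\cap B_R}/\vol(B_R)\to 1$ enters), and invoking Bochner--Schwartz to get nonnegativity of $\ft{\gamma}$. The support argument and the final Fourier inversion are then routine; the genuine work is in making the limiting measure and its positivity rigorous.
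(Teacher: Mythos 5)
First, a remark on the comparison: the paper itself gives no proof of Theorem \ref{thm:rn-w-tiling}; it proves only the finite-group statement (Theorem \ref{thm:discrete-w-tiling}) and refers to \cite{lev2022fuglede} for the $\RR^d$ case. Your outline follows the same circle of ideas as that proof: replace the unavailable $\Abs{\ft{\one_\Lambda}}^2$ of the discrete argument by the diffraction of the spectrum, i.e.\ the Fourier transform of an autocorrelation $\gamma$ of $\delta_\Lambda$. Within this strategy most of your steps are sound: the subsequential vague limit $\gamma$ exists by uniform translation boundedness; its atom at $0$ is exactly $1$ (diagonal terms, uniform separation, and the density $\Abs{\Lambda\cap B_R}/\vol(B_R)\to 1$ from \S\ref{ss:spectrality-as-tiling}); $\supp\gamma\setminus\Set{0}$ lies in the closed set $Z(\one_E)$ at distance at least $r$ from the origin; and the product step $\ft{\one_E}\cdot\gamma=\delta_0$ is legitimate precisely because $\gamma$ is a \emph{measure}, so simple zeros of the continuous function $\ft{\one_E}$ suffice to kill it (cf.\ \S\ref{ss:fourier-tiling}). (A minor overstatement: Bochner--Schwartz by itself gives a nonnegative \emph{tempered} measure; the translation boundedness of $\ft{\gamma}$, which you use to run the convolution theorem and to call $\one_E*\nu_0$ bounded, is a separate, though standard, fact about positive-definite translation-bounded measures.)

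The genuine gap is the unproved sentence ``its mass at the origin equals $(\dens\Lambda)^2=1$'', asserted for $\nu_0=\ft{\gamma}$. This is the step on which the entire theorem rests, and nothing you have established implies it. Bochner--Schwartz says nothing about atoms, and the unit atom of $\gamma$ at $0$ does not transfer: under the Fourier transform the summand $\delta_0$ of $\gamma$ contributes Lebesgue measure to $\ft{\gamma}$, not a point mass. If $\ft{\gamma}(\Set{0})\ge 1$ fails, your conclusion degenerates to ``$\one_E*\nu_0=1$ for some nonnegative measure $\nu_0$'', which is vacuous --- Lebesgue measure satisfies this for every $E$, exactly the degeneration the paper warns against right after Theorem \ref{thm:rn-w-tiling}. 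What you are invoking is the ``central peak'' theorem of diffraction theory (due to Hof): the diffraction of a point set of uniform density $\rho$ carries an atom of mass at least $\rho^2$ at the origin; note that the inequality is all you need, not the exact value. It is true, but it needs its own argument. Concretely, writing $B_R$ for the ball of radius $R$ about $0$, $D_R(x)=\sum_{\lambda\in\Lambda\cap B_R}e^{2\pi i\lambda\cdot x}$, and noting $\ft{\gamma_R}=\vol(B_R)^{-1}\Abs{D_R}^2$, the missing lemma is: for every fixed $\delta>0$,
$$
\liminf_{R\to\infty}\ \frac{1}{\vol(B_R)}\int_{B_\delta}\Abs{D_R(x)}^2\,dx\ \ge\ 1 .
$$
This follows from Cauchy--Schwarz against the coherent profile $\ft{\one_{B_R}}$: with a smooth $0\le\varphi\le 1$ supported in $B_\delta$, $\varphi(0)=1$, one has $\int\varphi\Abs{D_R}^2\ \ge\ \Abs{\int\varphi\, D_R\,\overline{\ft{\one_{B_R}}}}^2\big/\int\varphi\Abs{\ft{\one_{B_R}}}^2$; the denominator is at most $\Norm{\ft{\one_{B_R}}}_2^2=\vol(B_R)$ by Plancherel, while in the numerator each term equals $(\ft{\varphi}*\one_{B_R})$ evaluated at a point of $\Lambda\cap B_R$, hence equals $1-o(1)$ for all $\lambda$ lying deep inside $B_R$ (since $\ft{\varphi}\in L^1$ with integral $\varphi(0)=1$), giving $\Abs{\Lambda\cap B_R}^2\bigl(1-o(1)\bigr)$ in total. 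Since the positive measures $\ft{\gamma_R}\,dx$ converge vaguely to $\ft{\gamma}$ and vague limits do not lose mass on compact sets, this yields $\ft{\gamma}\bigl(\overline{B_\delta}\bigr)\ge 1$ for every $\delta$, hence $\ft{\gamma}(\Set{0})\ge 1$ and $\nu:=\ft{\gamma}-\delta_0\ge 0$. This lemma --- not the support argument or the Fourier inversion, which are indeed routine --- is the real content of the theorem, and a lemma of exactly this kind is also the crux of the proof in \cite{lev2022fuglede}.
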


In both Theorem \ref{thm:discrete-w-tiling} and Theorem \ref{thm:rn-w-tiling} it is crucial that we demand that the measure ($w$ in the discrete case, $\delta_0+\nu$ in the case of $\RR^d$) has a unit point mass at the origin. (Often we state the same condition as the fact that the domain $E$ can tile its complement by fractional copies of itself). Had we not insisted on that condition then every set $E$ would admit a weak tiling (by a multiple of the Haar measure) so admitting a weak tiling would be worthless in distinguishing the spectral sets.

\subsection{Weak tiling does not imply tiling}\label{ss:w-tiling-vs-tiling}
The question if weak tiling by a set $E$ implies that $E$ can also tile is obviously one of the first questions that comes to mind. If the answer were affirmative then we would have immediate confirmation of the ``spectral $\Longrightarrow$ tiling'' direction of the Fuglede Conjecture. Since we know that this direction fails in dimension 3 and higher we can conclude that there are sets $E$ which tile weakly but cannot tile. Any counterexample to the ``spectral $\Longrightarrow$ tiling'' direction is such an example.

We do not know of any such examples when the group is a product of at most two cyclic groups. Another important reason, besides the Fuglede Conjecture, that we would like to know if the implication ``weak tiling $\Longrightarrow$ tiling'' is true in a category of groups, say the cyclic groups, is that if the implication is true we would immediately have a polynomial time (in $N$, the size of the group $G$) algorithm which, given a subset $A \subseteq G$ would decide if $A$ tiles $G$ or not. The algorithm would decide if $A$ weakly tiles $G$ via linear programming, which takes polynomial time \cite{kolountzakis2009algorithms}.

\qq{\label{q:cyclic-group-tiling-algorithm}
Find an algorithm that runs in time polynomial in $N$ which decides if a given $A \subseteq \ZZ_N$ tiles the group $\ZZ_N$ by translations.
}

\subsection{Using the weak-tiling necessary condition to disprove spectrality.}\label{ss:using-w-tiling}

It is hard to overemphasize the importance of this necessary condition for spectrality. Perhaps one example of its use will immediately convey its strength. Of all three domains in Fig.\ \ref{fig:holes} only the one-dimensional one (the union of two intervals) was known \cite{laba2001twointervals} to be non-spectral and the proof of this was somewhat involved. But knowing the necessity of weak tiling for a set to be spectral, it is immediately clear that none of these sets can weakly tile space (fractionally tile its complement) because of the presence of the ``hole'' which cannot be covered with whole or fractional copies of the set without these copies touching the one whole copy that we must have at the origin.

\begin{figure}[h]
\input 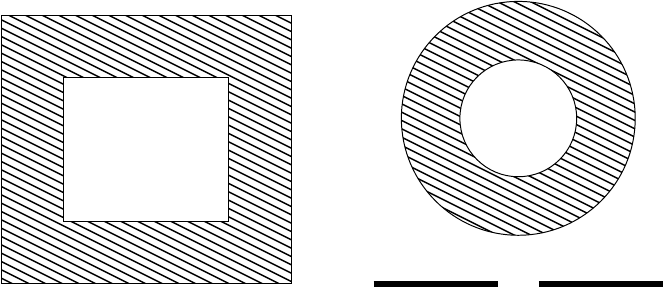_t
\caption{These domains are not spectral as they cannot weakly tile $\RR^2$ or $\RR$.}\label{fig:holes}
\end{figure}

The importance of this criterion was immediately made plain in \cite{lev2022fuglede} where it was used to complete the missing ``spectral $\Longrightarrow$ tiling'' direction of the Fuglede Conjecture for convex bodies.
\begin{theorem}\label{thm:convex-fuglede}
If $E$ is a convex body in $\RR^d$ which is spectral then it is a polytope and it can tile $\RR^d$ face-to-face by translations along a lattice.

Therefore, the Fuglede Conjecture is true for the class of all convex bodies in $\RR^d$.
\end{theorem}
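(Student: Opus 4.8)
The plan is to establish only the ``spectral $\Longrightarrow$ tiling'' direction, since the converse is already in hand: a convex body that tiles by translations is, by the classical theory of Venkov and McMullen \cite{venkov1954class,mcmullen1980convex}, a lattice tile, and hence spectral by Theorem \ref{thm:lattice-fuglede} (its spectrum being the dual lattice). So let $E$ be a convex body which is spectral. The first step is to invoke Theorem \ref{thm:rn-w-tiling}: spectrality furnishes a nonnegative, locally finite measure $\nu$ with $\one_E * (\delta_0 + \nu) = 1$ almost everywhere, that is, $E$ weakly tiles $\RR^d$ with a unit point mass at the origin. Everything then reduces to the purely geometric claim that \emph{a convex body which weakly tiles must actually tile by translations}.

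To prove that claim I would first show $E$ is a polytope by ruling out curvature on $\partial E$. Fix a boundary point $x_0$ with outer unit normal $n$ and choose coordinates so that $x_0 = 0$ and $n = e_d$. For small $t>0$ the inner point $x_0 - t n$ lies in $E$, so the origin copy already covers it to level $1$; positivity of $\nu$ then forces $\nu$-almost no other translate $E+s$ to meet a one-sided neighborhood of $x_0$ from inside $E$. On the other hand the outer points $x_0 + t n$ must be covered entirely by translates $E+s$ with $s \in \supp \nu$. Hence every such covering translate is tangent to $E$ at $x_0$, sitting on the opposite side of the common supporting hyperplane $\Set{x_d = 0}$. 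If $\partial E$ had positive curvature at $x_0$, the body and each tangent translate would curve away from one another like two osculating paraboloids, so a single translate could cover only a vanishingly thin curved cap of the outer sliver; covering the whole sliver to the exact level $1$ would then demand translates whose masses blow up near the contact point, which is incompatible with $\nu$ being locally finite and with the covering being \emph{exactly} $1$. This contradiction forces $\partial E$ to be flat near every boundary point, so $E$ is a convex polytope.

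Once $E$ is a polytope I would extract the Venkov--McMullen tiling conditions from the weak tiling. Symmetry $E = -E$ (after centering) is already guaranteed for spectral convex bodies by Theorem \ref{thm:nonsym}; the balancing of opposite facets follows as in \S\ref{ss:unbalanced}, since the weak tiling forces each facet of the origin copy to be countered by a flush translate, and the belt condition on codimension-$2$ faces is read off from how the tangent translates must fit together along each facet so as to leave no wedge uncovered. With these conditions verified, the classical characterization shows $E$ is a translational tile, and moreover that it tiles \emph{face-to-face along a lattice} \cite{venkov1954class,mcmullen1980convex}; this yields the full conclusion and, combined with the known converse, the Fuglede Conjecture for convex bodies. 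The main obstacle is the geometric heart of the second paragraph: converting the soft positivity-plus-exact-level-$1$ information of the weak tiling into the rigid statement that $\partial E$ is flat, and then that the resulting polytope genuinely tiles. This is precisely the new content of Lev and Matolcsi \cite{lev2022fuglede}, and the delicate point is controlling the local covering near a putatively curved boundary point without assuming any a priori regularity of $\nu$.
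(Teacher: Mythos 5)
Your overall architecture coincides with the paper's (and with Lev--Matolcsi's): the ``tiling $\Longrightarrow$ spectral'' direction via Venkov--McMullen \cite{venkov1954class,mcmullen1980convex} and Theorem \ref{thm:lattice-fuglede}, and the new direction reduced, through Theorem \ref{thm:rn-w-tiling}, to the purely geometric claim that a convex body admitting a weak tiling must genuinely tile. The paper itself does not prove that geometric core either: it defers to \cite{lev2022fuglede} and only conveys its spirit by showing that the disk admits no weak tiling (the two-concentric-rings comparison in \S \ref{ss:using-w-tiling}). So at the level of what is actually established, your proposal and the paper's treatment are on par, and your first paragraph (converse direction plus reduction to weak tiling) is correct.

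However, two steps in your curvature sketch fail as literally stated. First, ``every such covering translate is tangent to $E$ at $x_0$'' is not forced: the translates carrying the mass of $\nu$ need only be essentially disjoint from $E$, and different translates may cover the points $x_0+tn$ for different values of $t$, touching $E$ elsewhere or not at all. The statement that actually works is a per-translate \emph{area} bound: any convex body disjoint from $E$ can cover at most $O(\epsilon^{3/2})$ of the $\epsilon$-sliver outside a positively curved boundary point, because its downward-facing boundary is a convex graph forced to stay above the curved boundary of $E$. Summing this bound, exact level-$1$ covering of the sliver forces $\nu$-mass at least $c\,\epsilon^{-1/2}$ in a fixed bounded set for every $\epsilon>0$, contradicting local finiteness; this is the corrected form of your ``mass blow-up'' and is a local cousin of the paper's two-ring argument. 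Second, and more seriously, ``this forces $\partial E$ to be flat near every boundary point, so $E$ is a convex polytope'' is a false dichotomy: polytopes themselves have corner and edge points at which the boundary is \emph{not} locally flat, so an argument establishing your dichotomy at every boundary point would prove that no polytope weakly tiles, which is absurd since genuine tiles tile weakly. The graph-based exclusion argument applies only at boundary points with a unique supporting hyperplane, and even granting it there, ruling out curved smooth points does not yield finitely many facets (flat pieces can accumulate). Bridging that gap, and then deriving central symmetry of facets and the belt condition from weak tiling --- which you dispatch in a single sentence, although it was the missing ingredient even in dimension $3$ --- is precisely the content of \cite{lev2022fuglede}. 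Since you explicitly flag this as deferred, your proposal reads as a correct reduction plus an honest citation, matching the paper's own level of detail; but the two claims above should not stand as proof steps.
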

The Fuglede Conjecture for convex sets had a long history of partial results before its eventual confirmation. To begin with, as explained in \S \ref{ss:convex-tiles-are-spectral}, it was known early on that the ``tiling $\Longrightarrow$ spectral'' direction is true for all convex bodies.

In \cite{kolountzakis2000nonsymmetric} it was proved that only symmetric convex bodies can be spectral (see also \S \ref{ss:nonsym}). In \cite{iosevich2001convexbodies} it was shown that smooth convex bodies with everywhere positive Gaussian curvature are not spectral (see also \S \ref{ss:ball}). In \cite{iosevich2003fuglede} it was shown that the Fuglede Conjecture holds for all planar convex bodies and in \cite{greenfeld2017fuglede} it was proved for all convex \textit{polytopes} in dimension 3. 

We will not show here the details of how the weak tiling necessary condition is used in order to show Theorem \ref{thm:convex-fuglede} but we will try to convey the spirit of the proof in how it forces the domain to be a polytope and forbids curvature. In other words, we will show a new proof that the disk in the plane (this proof is valid in any dimension) is not spectral, by showing that it does not admit weak tilings. 

\begin{figure}[h]
\input 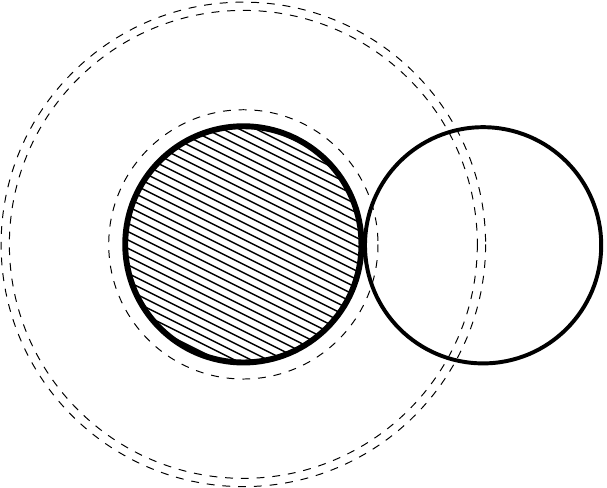_t
\caption{It is impossible to tile the exterior of a disk $D$ with fractional copies of the same disk. Whenever one tries to fill an area near the boundary of $D$ by placing a copy of $D$ with some nonnegative weight on it the effect is that strictly more weight is placed on the outer ring than on the inner ring.}\label{fig:disks}
\end{figure}

Let us fix the unit disk $D$ in the plane (see Fig.\ \ref{fig:disks}) and assume it is weakly tiling the plane. The shaded disk is the one whole copy of the disk that must be there in the weak tiling. The exterior of $D$ must be covered to a total of level 1 by weighted (fractional) copies of $D$ which are not allowed to overlap the shaded copy of $D$ as it has full weight. Draw two rings concentric with $D$ an inner ring just outside $D$ and an outer ring whose inner radius is 2. Let the width of the inner ring be a small positive number $\epsilon$ and the width of the outer ring be such that the area of the two rings is the same (so the width of the outer ring will be roughly $\epsilon/2$). 

We now claim that whenever we use a copy of $D$ in order to place some weight near $D$ it is always the case that strictly more weight is placed on the inner ring than on the outer ring. Indeed, the worst case for this claim is the one drawn in Fig.\ \ref{fig:disks}, namely when the fractional disk touches $D$. In this case the area covered in the inner ring is roughly $\epsilon^{3/2}$ while the area covered in the outer ring is roughly $\epsilon$, so when $\epsilon$ is sufficiently small the claim is true.
This leads to a contradiction as the total weight that gets placed on each ring must be equal to its area and therefore the two rings should get the same total weight.

\subsection{Fat Cantor sets}\label{ss:fat-cantor-sets}
Let us also see how the weak-tiling necessary condition for spectrality can be used to show that a class of fat Cantor sets (Cantor sets of positive measure) is not spectral \cite{kolountzakis2023spectral}.

Define the set as an infinite intersection of level sets
$$
E = \bigcap_{n=1}^\infty E_n,
$$
where the compact sets  $E_n \subseteq [0, 1]$ are shown in Fig.\ \ref{fig:fat-cantor-set}. We start with $E_0 = [0, 1]$ and at the $n$-th stage we remove an interval of length $d_n$ from the middle of each interval of $E_{n-1}$ thus leaving behind two intervals of length $\ell_n = (\ell_{n-1}-d_n)/2$.

\begin{figure}[h]
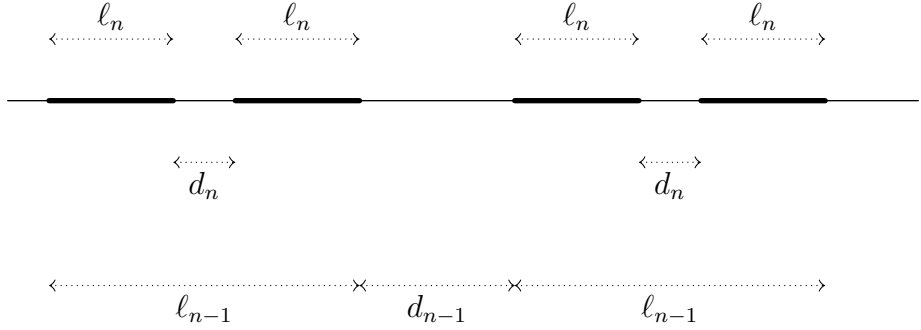

\begin{asy}
size(12cm);

real l=3, h=l/2, d=0.5*l, dd=(2*l+d)*d/l;

draw((-1, 0)--(7l, 0));
Label L1=Label("$\ell_n$", Relative(0.5), N);
Label L2=Label("$d_n$", Relative(0.5), S);
Label L3=Label("$d_{n-1}$", Relative(0.5), S);
Label L4=Label("$\ell_{n-1}$", Relative(0.5), S);

draw((0, 0)--(l, 0), linewidth(2));
draw(L1, (0, h)--(l, h), dotted, Arrows(TeXHead));
draw(L2, (l, -h)--(l+d, -h), dotted, Arrows(TeXHead));

draw((l+d, 0)--(2*l+d, 0), linewidth(2));
draw(L1, (l+d, h)--(2*l+d, h), dotted, Arrows(TeXHead));

draw(L4, (0, -3h)--(2l+d, -3h), dotted, Arrows(TeXHead));

draw(L3, (2*l+d, -3h)--(2*l+d+dd, -3h), dotted, Arrows(TeXHead));

draw((2*l+d+dd, 0)--(2*l+d+dd+l, 0), linewidth(2));
draw(L1, (2*l+d+dd, h)--(2*l+d+dd+l, h), dotted, Arrows(TeXHead));
draw(L2, (2*l+d+dd+l, -h)--(2*l+d+dd+l+d, -h), dotted, Arrows(TeXHead));

draw((2*l+d+dd+l+d, 0)--(2*l+d+dd+l+d+l, 0), linewidth(2));
draw(L1, (2*l+d+dd+l+d, h)--(2*l+d+dd+l+d+l, h), dotted, Arrows(TeXHead));

draw(L4, (2*l+d+dd, -3h)--(2*l+d+dd+l+d+l, -3h), dotted, Arrows(TeXHead));
\end{asy}
\caption{How a fat Cantor set is constructed. The upper row shows 4 intervals of $E_n$ that arose from two intervals of $E_{n-1}$ after we removed a middle interval of legth $d_n$ from each.}\label{fig:fat-cantor-set}
\end{figure}

If we assume that the set $E$ so constructed has positive Lebesgue measure then it follows that we must have $d_n/\ell_n \to 0$.
\begin{theorem}\label{thm:fat-cantor-set-is-not-spectral}
A fat Cantor set as described above cannot weakly tile the real line and is therefore not spectral.
\end{theorem}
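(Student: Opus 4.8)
The plan is to invoke the necessary condition for spectrality of Lev and Matolcsi, Theorem~\ref{thm:rn-w-tiling}: every bounded spectral set weakly tiles $\RR^d$. Thus it suffices to show that the fat Cantor set $E$ admits no weak tiling, i.e.\ that there is no nonnegative locally finite measure $\nu$ with $\one_E*(\delta_0+\nu)=1$ almost everywhere. The first step is to record what the single unit mass at $0$ forces: pairing this identity with $\one_E$ and using Fubini gives $\int \Abs{E\cap(E+t)}\,d\nu(t)=0$, so $\nu$ is carried by the set $\Set{t:\ \Abs{E\cap(E+t)}=0}$. In words, the mandatory copy of $E$ at the origin must have its complement filled, to level exactly $1$, by weighted translates $E+t$ none of which overlaps $E$ in positive measure. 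This is precisely the ``tiling of the complement by fractional copies'' picture used for the disk in \S\ref{ss:using-w-tiling}, and the goal is to exhibit the analogous local obstruction.

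Next I would quantify the fatness. All $2^n$ level-$n$ intervals are congruent, each meeting $E$ in measure $m_n=\Abs{E}/2^n$ while having length $\ell_n=\Abs{E_n}/2^n$; hence the relative density of $E$ inside a level-$n$ interval is $m_n/\ell_n=\Abs{E}/\Abs{E_n}\to 1$. This is the analytic content of $d_n/\ell_n\to0$: deep in the construction every level-$n$ interval is filled by $E$ up to a vanishing fraction $1-\Abs{E}/\Abs{E_n}=o(1)$, whereas the gaps removed at that stage have the comparatively smaller relative width $d_n/\ell_n\to0$.

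The heart of the argument is a weight-balance of exactly the type drawn in Fig.~\ref{fig:disks}, but carried out at the scale of a single deep gap. Fix a large $n$ and a level-$n$ gap $G$ of width $d_n$, flanked by two level-$n$ intervals $C_L,C_R$ of length $\ell_n$. Since $C_L$ and $C_R$ belong to the origin copy of $E$ and are already at level $1$ there, the measure $\nu$ places no mass on them; all the covering of $G$ must be done by translates $E+t$ that avoid the nearly full material of $C_L\cup C_R$. I would then introduce two test regions of equal Lebesgue measure straddling $G$ and show that any admissible copy deposits strictly more weight in the one nearer the gap than in the farther one. Summing over $\nu$ and comparing with the requirement that each region be covered to level exactly $1$ produces the same contradiction as in the disk case: two equal-measure regions cannot both receive total weight equal to their measure if every contributing copy is biased toward one of them.

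The step I expect to be the main obstacle is the per-copy inequality, which here has no convexity or curvature to lean on. The geometric input that must replace the disk's curvature is the following incompatibility of scales: a translate $E+t$ that avoids the flanking intervals $C_L,C_R$ can have low density in the window $C_L\cup G\cup C_R$ only if one of its \emph{own} gaps, necessarily of width $\gtrsim\ell_n$, is aligned with that window; but a gap of width $\gtrsim\ell_n$ in $E+t$ is bordered by a solid block of $E+t$ of width $\gtrsim\ell_n\gg d_n$, which cannot be accommodated inside the width-$d_n$ gap $G$ nor inside the $o(\ell_n)$ of room left over in $C_L,C_R$. Turning this qualitative ``the copy's solid block is too wide for $G$ while its own gap is too wide for the children's residual holes'' into a clean quantitative bound, uniform as $n\to\infty$ via $d_n/\ell_n\to0$, is the crux; once it is in hand the balance contradiction, and with it the conclusion that $E$ cannot weakly tile and is therefore not spectral, follow immediately.
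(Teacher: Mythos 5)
Your overall strategy is the right one and matches the paper's: invoke Theorem \ref{thm:rn-w-tiling} to reduce spectrality to weak tiling, observe that the unit mass at $0$ forces $\nu$ to live on $\Set{t:\ \Abs{E\cap(E+t)}=0}$, and then find a quantitative obstruction to covering the holes. But the proposal stops exactly where the proof has to start: you yourself flag the per-copy inequality as ``the crux'' and leave it unproven, and the mechanism you sketch for it rests on a false premise. A fat Cantor set is nowhere dense, so a translate $E+t$ has \emph{no} ``solid block of width $\gtrsim\ell_n$'' bordering any of its gaps --- at every scale its ``blocks'' are themselves Cantor sets riddled with holes. This is not a technicality: it is precisely why the theorem is harder than the trivial remark that each finite-stage set $E_n$ cannot weakly tile, since translates of $E$ can in principle thread through one another's holes, and any argument that treats pieces of $E+t$ as intervals proves nothing about $E$.

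The paper's proof supplies exactly the device your sketch is missing: it never attempts a per-copy bound for $E$ itself, but only for the approximants $E_n$, whose level-$n$ pieces really are solid intervals of length $\ell_n$. For these, the crucial inequality $\frac{\ell_n-d_n}{d_n}\int_{A_n}\one_{E_n}*\nu \le \int_{E_n}\one_{E_n}*\nu$, with $A_n=E_{n-1}\setminus E_n$ the union of stage-$n$ gaps, is elementary geometry: an interval of $E_n+t$ of length $\ell_n$ meeting a gap of width $d_n$ can place at most $d_n$ of its mass in that gap and must place at least $\ell_n-d_n$ on $E_n$. The link back to $E$ is then made by monotonicity and a limit, not by a local balance: since $A_n\subseteq E^c$, weak tiling gives $\int_{A_n}\one_E*\nu=\Abs{A_n}$, and $\one_E\le\one_{E_n}$ converts this into the lower bound $\int_{E_n}\one_{E_n}*\nu \ge \frac{\ell_n-d_n}{d_n}\Abs{A_n} = \left(\frac12-\frac{d_n}{2\ell_n}\right)\Abs{E_n} \to \frac12\Abs{E}>0$, while monotone convergence and the fact that translates in $\supp\nu$ avoid $E$ force $\int_{E_n}\one_{E_n}*\nu \to \int_E\one_E*\nu = 0$, a contradiction. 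Note also that the paper's contradiction is structurally different from your ``two equal-measure regions both filled to level $1$'' picture: the region receiving the dumped weight, $E_n$, is mostly $E$ and is required to receive total weight tending to $0$, not weight equal to its measure; so even if some per-copy bias could be established, your disk-style balance around a single deep gap would still need to be reorganized along these lines to close.
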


It is obvious that each $E_n$ cannot weakly tile since the hole is too small. We have to find a way to pass to the limit. Suppose $\one_E * \mu = 1$ on $\RR$,
$$
\mu = \delta_0+\nu, \text{ and } \nu \ge 0.
$$
Since $E_{n+1} \subseteq E_n$ {\em monotone convergence} gives
$$
\int_{E_n} \one_{E_n} * \nu \to \int_E \one_E * \nu = 0,
$$
from the weak tiling assumption $\one_E*(\delta_0 + \nu) = 1$.

The crucial inequality 
is
$$
\frac{\ell_n-d_n}{d_n} \int_{A_n} \one_{E_n}*\nu \le \int_{E_n}\one_{E_n}*\nu
$$
where $A_n = E_{n-1} \setminus E_n$ (what was thrown out at the $n$-th stage). The intuition behind this inequality is that whenever we are trying to fill the gap $A_n$ using some weighted copies of our set then we end up putting more weight on the remaining set, which will lead to a contradiction, as follows.
\begin{align*}
0 \longleftarrow \int_{E_n} \one_{E_n}*\nu
 &\ge \frac{\ell_n-d_n}{d_n} \int_{A_n} \one_{E_n}*\nu \\
&\ge \frac{\ell_n-d_n}{d_n} \int_{A_n} \one_{E}*\nu \text{\ \  (since $E \subseteq E_n$)} \\
&= \frac{\ell_n-d_n}{d_n} \Abs{A_n} \text{\ \  (due to weak tiling since $A_n \subseteq E^c$)}\\
&= \frac{\ell_n-d_n}{d_n} \frac{d_n}{2\ell_n} \Abs{E_n} \text{\ \  (since $\Ds \frac{\Abs{E_n}}{\ell_n} = 2 \frac{\Abs{A_n}}{d_n})$}  \\
&= \left(\frac12 - \frac{d_n}{2\ell_n}\right) \Abs{E_n}  \\
&\to \frac12 \Abs{E} \text{ as $n\to\infty$.}
\end{align*}
We have reached a contradiction so $E$ cannot weakly tile.

\noindent{\bf Acknowledgement.} The author would like to thank the referee for the extremely thorough work which clarified several obscure points of this paper.

\bibliographystyle{alpha}
\bibliography{mk-bibliography.bib}

\end{document}